\documentclass[12pt]{amsart}

\usepackage{amsthm}
\usepackage{amsmath}
\usepackage{mathrsfs}
\usepackage{amssymb}
\usepackage[dvipsnames]{xcolor}
\usepackage{tikz-cd}
\usepackage{enumitem}
\usepackage{hyperref}
\usepackage{algorithm}
\usepackage{algpseudocode}
\usepackage{graphicx}
\usepackage{adjustbox}
\hypersetup{colorlinks = true,
	linkcolor = MidnightBlue,
	urlcolor = BrickRed,
	citecolor = MidnightBlue}

\title[index obstructions on abelian varieties]{Refined index obstructions for Brauer classes on an abelian variety}
\author{Eoin Mackall}
\address{Department of Mathematics, University of California Santa Cruz, Santa Cruz, CA USA}
\email{eoinmackall@gmail.com}
\date{\today}
\keywords{Brauer group; period-index problem; integral Hodge conjecture; abelian varieties}
\subjclass[2020]{14F22; 14C30; 16K50}

\newtheorem{thm}{Theorem}[section]

\newtheorem{prop}[thm]{Proposition}

\newtheorem{lem}[thm]{Lemma}

\theoremstyle{definition}
\newtheorem{defn}[thm]{Definition}
\newtheorem{exmp}[thm]{Example}
\newtheorem{rmk}[thm]{Remark}
\newtheorem*{conj*}{Conjecture}

\begin{document}
	
\begin{abstract}
We produce refined index obstructions, generalizing recently constructed index obstructions due to de Jong and Perry, for topologically trivial Brauer classes on smooth and projective complex varieties. We show that our refined obstructions are more stringent than previous obstructions and, as a consequence, we produce more counterexamples to the integral Hodge conjecture. Throughout this work, we focus on algorithmic aspects of these obstructions and we illustrate many of these aspects through the concrete examples of complex abelian varieties.
\end{abstract}
\maketitle

\section{Introduction}
\subsection{Background} The integration and application of Hodge theory to the area of Brauer groups and Azumaya algebras has had tremendous success in recent years.
The most substantial of this success is certainly the recent proof of the period-index conjecture for abelian threefolds due to Hotchkiss and Perry, \cite{hotchkiss2024periodindexconjectureabelianthreefolds}, but there have also been new constructions of counterexamples to the integral Hodge conjecture due to Hotchkiss \cite{hotchkiss2022hodgetheorytwistedderived} and, separately, de Jong--Perry \cite{dejong2022periodindexproblemhodgetheory}, that one can directly point to. Notable as well is the pioneering work of Kresch \cite{MR1934439} on representability of Brauer classes by quaternion Azumaya algebras that anticipated this development.

The point of this paper is to further elucidate the algorithmic nature of this connection that has gone relatively unnoticed or, at least, has lacked direct focus by the broader algebra community. Specifically, we focus on the \textit{explicit} and \textit{exact} methods of de Jong and Perry \cite{dejong2022periodindexproblemhodgetheory} that enabled their constructions of new counterexamples to the integral Hodge conjecture. While this aspect of the authors' work was not their main goal, we believe that there should be an immediate and direct appeal to researchers and users of Azumaya algebras.

To explain the parts of the work of de Jong--Perry on which we build, we focus solely on the case of a smooth and projective variety $X$ over the complex numbers. We suppose that one knows an explicit description for the structure of both the singular cohomology $\mathrm{H}^*(X^{an},\mathbb{Q})$ and its subgroup of integral classes, as well as the space of rational Hodge classes $\mathrm{Hdg}^*(X^{an})\subset \mathrm{H}^{2*}(X^{an},\mathbb{Q})$. The first point of this work is that:
\begin{enumerate}[label=(P\arabic*), ref=P\arabic*]
\item\label{intro: t1} the subgroup of $\mathrm{Br}(X)$, consisting of Brauer classes in the kernel of the forgetful map \[\mathrm{Br}(X)\rightarrow \mathrm{Br}_{top}(X^{an}),\quad [\mathcal{A}]\mapsto [\mathcal{A}_{top}],\] provides a collection of Brauer classes $\alpha$ on $X$ whose invariants are amenable to computation solely from the data of a rational $B$-field from $\mathrm{H}^2(X^{an},\mathbb{Q})$ and known structures on $\mathrm{H}^*(X^{an},\mathbb{Q})$.

\end{enumerate}

For those \textit{topologically trivial} Brauer classes $\alpha$ where (\ref{intro: t1}) applies, de Jong and Perry provide an algorithmic method for computing lower bounds for (and sometimes completely determining) the \textit{index of $\alpha$} from the data of a $B$-field representing $\alpha$. More precisely, de Jong and Perry show (cf.\ \cite[Theorem 5.10]{dejong2022periodindexproblemhodgetheory}) that the index $\mathrm{ind}(\alpha)$ divides an integer $d\geq 0$ only if there exist rational Hodge classes $c_j\in \mathrm{H}^{2j}(X^{an},\mathbb{Q})$ for all $0\leq j \leq m=\min\{d,\dim(X)\}$ such that the polynomials \begin{equation}\label{intro: polys}p_i^{B,d}(1, c_1,...,c_i) := \binom{d}{i} B^i + \sum_{j=1}^i\binom{d-j}{i-j}B^{i-j}c_j\end{equation} are integral classes inside $\mathrm{H}^{2i}(X^{an},\mathbb{Q})$ for all $1\leq i \leq m$. The second point of this work is that:

\begin{enumerate}[label=(P\arabic*), ref=P\arabic*, start=2]
\item\label{intro: t2} in concrete examples, e.g.\ for complex abelian varieties, one can algorithmically parametrize all such rational Hodge classes that satisfy this condition (cf.\ Remark \ref{rmk: classsolver}).
\end{enumerate}

Thus, one may effectively and algorithmically produce lower bounds for the indices of topologically trivial Brauer classes by way of (\ref{intro: t2}). This work largely started as a means of generalizing these computations with the goal of applying them to other aspects of Azumaya algebras.

\subsection{Refined Obstructions}
In \cite{dejong2022periodindexproblemhodgetheory}, the authors point out that any algebraic condition that one can impose on the generic fiber of the relative Severi--Brauer scheme for an Azumaya algebra --- representing the topologically trivial Brauer class $\alpha\in \mathrm{Br}(X)$ of a smooth projective complex variety $X$ --- can be used to deduce conditions analogous to but possibly more general than \cite[Theorem 5.10]{dejong2022periodindexproblemhodgetheory}. Since there is a corpus of research in this direction, \cite{MR1348794,MR1615533,MR3359722,MR3590349,MR3722500,MR3990787,MR4078228}, it's natural to investigate this further.

Motivated by results in the Chow groups of Severi--Brauer varieties, we make an orthogonal observation to the point of de Jong and Perry. Namely, there is still more that one can say about the constraints of \cite[Theorem 5.10]{dejong2022periodindexproblemhodgetheory}, even from a strictly topological perspective. Specifically we observe the following ``refined" index obstructions:

\begin{thm}\label{intro: thm}
Let $X$ be a smooth and projective complex variety. Let $\alpha\in \mathrm{Br}(X)$ be a topologically trivial Brauer class. Let $\mathcal{A}$ be an Azumaya algebra in the class $\alpha$. Write $\pi:\mathbf{SB}(\mathcal{A})\rightarrow X$ for the associated Severi--Brauer scheme. Suppose that $\mathrm{ind}(\alpha)=n$. 

Then there exists a coherent $\mathcal{O}_{\mathbf{SB}(\mathcal{A})}$-module $\mathcal{M}$ and there exists a class $z\in K_0^{top}(X^{an})$ such that \[c_k(\mathcal{M})=\sum_{j=0}^k \binom{n-j}{k-j}\pi^*c_j(z)h^{k-j}\in \mathrm{H}^{2k}(\mathbf{SB}(\mathcal{A})^{an},\mathbb{Q})\] where $h=c_1(\mathcal{O}_{\mathbb{P}(E)}(1))$ under the topological identification of $\mathbf{SB}(\mathcal{A})$ with $\mathbb{P}(E)$ for a complex topological vector bundle $E$ on $X$.

In particular, if the index $\mathrm{ind}(\alpha)$ divides an integer $d>0$, then there must exist a choice of rational Hodge classes $c_j\in \mathrm{H}^{2j}(X^{an},\mathbb{Q})$ for all $0\leq j \leq m=\min\{d,\dim(X)\}$ such that for all $l\geq 1$ we have \begin{equation}\label{intro: Steenrod} \mathrm{Sq}^{2l}(C_i)\equiv \sum_{k=0}^l\binom{i-l+k-1}{k}C_{l-k}C_{i+k} \pmod{2}\end{equation} for the Steenrod operations $\mathrm{Sq}^{2l}:\mathrm{H}^{2i}(X^{an},\mathbb{Z}/2\mathbb{Z})\rightarrow \mathrm{H}^{2(i+l)}(X^{an},\mathbb{Z}/2\mathbb{Z})$ of the resulting integral cohomology classes $C_i=p_i^{B,d}(1,c_1,...,c_i)$.
\end{thm}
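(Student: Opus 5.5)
The plan has two parts: build $\mathcal{M}$ and compute its Chern classes topologically, then extract Hodge classes on $X$ whose $p_i^{B,d}$ are honest Chern classes of a topological $K$-class, so that the Wu formula applies.

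\emph{Construction of $\mathcal{M}$.} Since $\mathrm{ind}(\alpha)=n$, there is a coherent $\alpha$-twisted sheaf $\mathcal{V}$ on $X$ of (generic) rank $n$. On $\mathbf{SB}(\mathcal{A})$ the class $\pi^*\alpha$ becomes trivial, and the tautological quotient gives a $\pi^*\alpha^{-1}$-twisted line bundle, a twisted form of $\mathcal{O}(1)$. I set $\mathcal{M}=\pi^*\mathcal{V}\otimes\mathcal{O}(1)$, with the twists arranged to cancel, so $\mathcal{M}$ is an untwisted coherent module.

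\emph{Topological computation of $c_k(\mathcal{M})$.} Because $\alpha$ is topologically trivial, $\mathcal{A}_{top}\simeq \mathcal{E}nd(E)$ and $\mathbf{SB}(\mathcal{A})^{an}\simeq\mathbb{P}(E)$. Under this identification, twisted sheaves correspond to untwisted topological objects, and the twisted $\mathcal{O}(1)$ becomes $\mathcal{O}_{\mathbb{P}(E)}(1)$ tensored with a pullback from $X$. I would proceed as follows.
\begin{enumerate}
\item Resolve $\mathcal{V}$ by locally free twisted sheaves. This defines a class $z\in K_0^{top}(X^{an})$ of rank $n$, after absorbing the pulled-back line bundle into $z$, such that $[\mathcal{M}^{an}]=\pi^*z\otimes[\mathcal{O}_{\mathbb{P}(E)}(1)]$ in $K^{top}(\mathbb{P}(E))$.
\item Apply the standard formula for Chern classes of a rank $n$ (virtual) class tensored with a line bundle. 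By the splitting principle it gives $c_k=\sum_j\binom{n-j}{k-j}c_j(z)h^{k-j}$.
\end{enumerate}
This is a polynomial identity in the universal classes on $BU\times\mathbb{Z}$, so it holds for virtual classes of fixed rank. This proves the first claim.

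\emph{From $c(\mathcal{M})$ to Hodge classes on $X$.} If $\mathrm{ind}(\alpha)\mid d$, I replace $\mathcal{V}$ by $\mathcal{V}^{\oplus d/n}$, giving rank $d$ and $z$ of rank $d$. The $c_k(\mathcal{M})$ are Chern classes of a coherent sheaf, hence rational Hodge classes on $\mathbf{SB}(\mathcal{A})$. Following de Jong--Perry, I write $h$ in terms of an algebraic class: $h=H-\pi^*B$ rationally, where $H$ is the Hodge class coming from the twisted $\mathcal{O}(1)$ and $B$ is the $B$-field. Expanding $c_k(\mathcal{M})$ in the Leray--Hirsch basis $1,H,\dots,H^{d-1}$ then yields rational Hodge classes $c_j$ on $X$ (the coefficients of the $H$-powers, solved for recursively). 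The re-expansion shows that $c_i(z)=p_i^{B,d}(1,c_1,\dots,c_i)$. This matching of coefficients is the step I expect to be the main obstacle: one must check that the change of variables $h\leftrightarrow H-\pi^*B$ produces exactly the binomial polynomials $p_i^{B,d}$, which is in effect reproving \cite[Theorem 5.10]{dejong2022periodindexproblemhodgetheory} in this language. Since $z$ is a genuine integral topological $K$-class, each $C_i=c_i(z)$ is integral.

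\emph{Steenrod condition.} The mod 2 reductions of Chern classes of a complex vector bundle satisfy the Wu formula
\[
\mathrm{Sq}^{2l}(c_i)=\sum_{k=0}^l\binom{i-l+k-1}{k}c_{l-k}c_{i+k}\pmod 2 .
\]
This is an identity in $\mathrm{H}^*(BU,\mathbb{Z}/2)$, so it holds universally. It therefore applies to virtual classes after adding a trivial bundle, which does not change the Chern classes. Applying it to $z$ gives \eqref{intro: Steenrod} for $C_i$, completing the proof.
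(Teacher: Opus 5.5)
Your proposal is correct and follows essentially the paper's route, translated into twisted-sheaf language. Your $\pi^*\mathcal{V}\otimes\mathcal{O}(1)$ is the Morita translation of the sheaf $(\pi^*\mathcal{M}'\otimes_{\pi^*\mathcal{A}}\zeta^\vee_{\mathcal{A}})^\vee$ of Proposition~\ref{prop: taut}, and tensoring with the twisted $\mathcal{O}(1)$ directly even spares you the sheaf dual of a possibly non-locally-free module. Your passage to $K_0^{top}(X^{an})$ together with the line-bundle tensor formula is Theorem~\ref{thm: chernm}, and your last step is the Wu formula of Proposition~\ref{prop: obs2}.

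The step you flagged as the main obstacle is routine. Put $H=h+\pi^*B$ and $c_j=\sum_m\binom{d-m}{j-m}c_m(z)(-B)^{j-m}$. Then $c_k(\mathcal{M})=\sum_j\binom{d-j}{k-j}\pi^*c_j\,H^{k-j}$, so each $c_k$ is Hodge by induction on $k$. No Leray--Hirsch expansion is needed, and that basis would in any case stop at $H^{\deg(\mathcal{A})-1}$ rather than $H^{d-1}$. Undoing the substitution gives $c_i(z)=p_i^{B,d}(1,c_1,\dots,c_i)$. Together with your use of $\mathcal{V}^{\oplus d/n}$, this makes explicit the passage from Theorem~\ref{thm: chernm} to the ``in particular'' clause, which the paper leaves implicit.
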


Thus, if one can algorithmically compute the Steenrod operations on the left of (\ref{intro: Steenrod}) then, combined with a parameterization from (\ref{intro: t2}), one gets a collection of further polynomial constraints on the allowable residual classes of the polynomials (\ref{intro: polys}) of these Hodge classes modulo $2$. There is also nothing particularly special about the prime $2$ here: for each prime $p>2$, the reduced power operations $\mathrm{P}^l(C_i)$ of the classes $C_i$ must satisfy certain universal weighted homogeneous polynomials in the classes $C_k$ as well, so if one can compute $\mathrm{P}^l(C_i)$, then these can also be used to provide further constraints.

\begin{rmk}
In \cite{hotchkiss2022hodgetheorytwistedderived}, Hotchkiss provides an alternative obstruction to bounding the index of a topologically trivial Brauer class utilizing twisted Mukai structures. When one can easily work with the image of the Chern character map, as is the case for abelian varieties, Hotchkiss's obstruction is even more restrictive than Theorem \ref{intro: thm}. See Remark \ref{rmk: Hotchkiss} for more information on this alternative obstruction.
\end{rmk}

Theorem \ref{intro: thm} is the culmination of a sequence of results from Section \ref{sec: obs} of this article. Throughout this section, we focus on the particularly nice case of a very general complex abelian variety for motivation. The section culminates in an example (Example \ref{ex: obsdjp}) where the refined index obstructions from Theorem \ref{intro: thm} can be explicitly shown to provide stricter constraints than the index obstructions of de Jong and Perry \cite[Theorem 5.10]{dejong2022periodindexproblemhodgetheory} alone. We mention that, along the same vein of the results in ibid., this example also provides another counterexample to the integral Hodge conjecture.

\subsection{Applications}
Compared to Section \ref{sec: obs} of this article, where our focus is on building the refined index obstructions of Theorem \ref{intro: thm}, in the final section of this article, Section \ref{sec: apps}, we focus on the application of the algorithmic aspects of this work in more detail. 

Among the results in this section, we include a simple analysis (Proposition \ref{prop: bounds}) on the degrees of failure of the de Jong--Perry index obstruction algorithm for topologically trivial Brauer classes $\alpha$ on a complex abelian variety $X$ whose period $\mathrm{per}(\alpha)$ is not coprime to $(\dim(X)-1)!$. We also provide some data (see Figures \ref{fig: bfield_data_4_4}--\ref{fig: bfield_data_8_2_r}) to the effect of how sharp this analysis is, and to how it may also apply to our new refined index obstruction algorithm.

We also include an example (Example \ref{ex: indec}), found with the aid of a computer, of an indecomposable topologically trivial Brauer class $\alpha\in \mathrm{Br}(X)$ of period $\mathrm{per}(\alpha)=2$ and index $\mathrm{ind}(\alpha)=4$ on a very general abelian threefold $X$. While indecomposable Brauer classes with these invariants are most likely not new, we remark that it is interesting that they can be constructed so readily with the help of the de Jong--Perry and our refined index obstruction algorithms.

\subsection*{Notation}
\begin{itemize}
\item Given a complex algebraic variety $X$, we write $X^{an}$ for the analytification of $X$.
\item For a group $G$ and a topological space $Y$, we write $\mathrm{H}^i(Y,G)$ for the singular cohomology of $Y$ with coefficients in $G$.
\item For a paracompact, Hausdorff, locally contractible space (such as $X^{an}$ for a smooth complex variety $X$), we identify the derived cohomology $\mathrm{H}^i(X,\underline{G})$ of a constant sheaf associated to a group $G$ with the singular cohomology $\mathrm{H}^i(X,G)$.
\end{itemize}

\subsection*{Acknowledgments} The author would like to thank Alex Perry for comments on an initial draft of this paper.

\section{Preliminaries}\label{sec: pre}
Let $X$ be a complex abelian variety of dimension $g$. It follows from uniformization that $X^{an}\cong \mathbb{C}^g/\Lambda$ for a lattice $\Lambda\subset \mathbb{C}^g$ of rank $2g$. Naturally this gives isomorphisms \[\mathrm{H}_1(X^{an},\mathbb{Z})\cong \Lambda\quad\mbox{and}\quad\mathrm{H}^1(X^{an},\mathbb{Z})\cong \mathrm{Hom}_{Ab}(\Lambda, \mathbb{Z}).\] Choosing a basis for $\Lambda$, we can identify the topological space of $X$ with a product of $2g$ circles. The K{\"u}nneth formula then gives a decomposition \[\bigwedge^k \mathrm{H}^1(X^{an},\mathbb{Z}) \cong \mathrm{H}^k(X^{an},\mathbb{Z})\] sending a $k$-form to the associated cup product on the right hand side. In this section and below, we will frequently fix a principal polarization $\theta\in \mathrm{H}^2(X^{an},\mathbb{Z})$ and a symplectic basis $x_1,y_1,...,x_g,y_g$ for $\mathrm{H}^1(X^{an},\mathbb{Z})$ so that $\theta=x_1\wedge y_1+\cdots + x_g\wedge y_g$ in this basis. For more details on this set-up, we refer the reader to \cite{MR2062673}.

For an algebraic variety $Y$, we write $\mathrm{Br}(Y)$ for the Brauer group that consists of equivalence classes of Azumaya algebras on $Y$. Analogously, for any topological space $Z$, we write $\mathrm{Br}_{top}(Z)$ for the similarly defined (topological) Brauer group that consists of equivalence classes of topological Azumaya algebras on $Z$. By \cite[Corollaire 1.7]{MR244269}, there is a canonical isomorphism $\mathrm{Br}_{top}(Z)\cong \mathrm{H}^3(Z,\mathbb{Z})_{tors}$ between the topological Brauer group of any finite CW-complex $Z$ (e.g.\ for $Z=X^{an}$) and the torsion subgroup of the third singular cohomology group of $Z$.

Fix an integer $n\geq 1$. From the following commutative diagram of sheaves on the topological space $X^{an}$,
\[\begin{tikzcd}
0\arrow{r} & \underline{\mathbb{Z}} \arrow["\cdot n"]{r}\arrow{d} & \underline{\mathbb{Z}} \arrow["\mathrm{exp}\left(\frac{2\pi i}{n} \cdot -\right)"]{r}\arrow["\cdot \frac{2\pi i}{n}"]{d} &\underline{\mu_n}\arrow{r}\arrow{d} & 1\\
0\arrow{r} & \underline{\mathbb{Z}}\arrow["\cdot 2\pi i"]{r} & \mathcal{O}_{X^{an}}\arrow["\mathrm{exp}"]{r}& \mathcal{O}_{X^{an}}^*\arrow{r} & 1
\end{tikzcd}
\] we get the commutative diagram below, with exact rows as well.
\begin{equation}\label{eq: exp}\begin{tikzcd}
\mathrm{H}^2(X^{an},\mathbb{Z})\arrow["\mathrm{exp}\left(\frac{2\pi i}{n} \cdot -\right)"]{r}\arrow["\cdot \frac{2\pi i}{n}"]{d} & \mathrm{H}^2(X^{an},\mu_n)\arrow{r}\arrow{d} & \mathrm{H}^3(X^{an},\mathbb{Z})\arrow[equals]{d}\\
\mathrm{H}^2(X^{an},\mathcal{O}_{X^{an}})\arrow{r} & \mathrm{H}^2(X^{an},\mathcal{O}_{X^{an}}^*)\arrow{r} & \mathrm{H}^3(X^{an},\mathbb{Z})
\end{tikzcd}\end{equation} Since $\mathrm{H}^2(X^{an},\mu_n)$ is a torsion group, its image in $\mathrm{H}^3(X^{an},\mathbb{Z})$ lands in the topological Brauer group $\mathrm{Br}_{top}(X^{an})$ under the above mentioned isomorphism. The following lemma is immediate.

\begin{lem}\label{lem: toptriv}
Let $X$ be a smooth projective variety over $\mathbb{C}$. Then the following diagram of canonical morphisms is commutative.
\[\begin{tikzcd}
\mathrm{H}^2_{\acute{e}t}(X,\mu_n)\arrow[twoheadrightarrow]{r}\arrow["\sim"]{d} & \mathrm{Br}(X)[n]\arrow["\phi"]{rd}\arrow{d} & \\
\mathrm{H}^2(X^{an},\mu_n)\arrow{r} & \mathrm{H}^2(X^{an},\mathcal{O}_{X^{an}}^*)_{tors}\arrow{r} & \mathrm{H}^3(X^{an},\mathbb{Z})_{tors}
\end{tikzcd}\]
Moreover, under the identification $\mathrm{Br}_{top}(X^{an})\cong \mathrm{H}^3(X^{an},\mathbb{Z})_{tors}$ the map $\phi$ is identified with the forgetful map \[\mathrm{Br}(X)[n]\rightarrow \mathrm{Br}_{top}(X^{an}),\quad [\mathcal{A}]\mapsto [\mathcal{A}_{top}]\] which sends the class of an Azumaya algebra $\mathcal{A}$ on $X$ to the topological Azumaya algebra $\mathcal{A}_{top}:=\mathcal{A}\otimes_{\mathcal{O}_X} \mathscr{C}(X^{an},\mathbb{C})$.$\hfill\square$
\end{lem}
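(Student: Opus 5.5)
The commutativity of the diagram in Lemma \ref{lem: toptriv} will be checked square by square, comparing étale and analytic cohomology through the analytification morphism of sites. The left square has three ingredients. The first is Artin's comparison theorem, which gives the vertical isomorphism $\mathrm{H}^2_{\acute{e}t}(X,\mu_n)\cong \mathrm{H}^2(X^{an},\mu_n)$. The second is the Kummer sequence $1\to\mu_n\to\mathbb{G}_m\xrightarrow{n}\mathbb{G}_m\to 1$ on $X_{\acute{e}t}$, which produces the surjection $\mathrm{H}^2_{\acute{e}t}(X,\mu_n)\twoheadrightarrow \mathrm{Br}(X)[n]$; here I use Gabber's theorem $\mathrm{Br}=\mathrm{Br}'$ for quasi-projective $X$ to identify $\mathrm{H}^2_{\acute{e}t}(X,\mathbb{G}_m)_{tors}$ with $\mathrm{Br}(X)$. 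The third is the analogous exponential-type sequence $1\to\mu_n\to\mathcal{O}^*_{X^{an}}\xrightarrow{n}\mathcal{O}^*_{X^{an}}\to 1$ on $X^{an}$.

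The analytification morphism $\epsilon:X^{an}\to X_{\acute{e}t}$ pulls the étale Kummer sequence back to the analytic one. By functoriality of the induced long exact sequences, the square formed by $\mathrm{H}^2(\mu_n)\to \mathrm{H}^2(\mathbb{G}_m)$ on both sides commutes. The middle vertical map $\mathrm{Br}(X)[n]\to \mathrm{H}^2(X^{an},\mathcal{O}^*)_{tors}$ is by definition this pullback, or equivalently the map sending an Azumaya algebra to the class of its analytic $PGL_r$-torsor via the boundary map $\mathrm{H}^1(PGL_r)\to\mathrm{H}^2(\mathbb{G}_m)$. These two descriptions agree because $\epsilon^*$ is compatible with the nonabelian boundary maps of $1\to\mathbb{G}_m\to GL_r\to PGL_r\to 1$. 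I also need the map $\mathrm{H}^2(X^{an},\mu_n)\to\mathrm{H}^2(X^{an},\mathcal{O}^*)$ defined by the analytic Kummer sequence to coincide with the one in diagram (\ref{eq: exp}). This holds because the map of short exact sequences drawn before (\ref{eq: exp}) shows that the inclusion $\underline{\mu_n}\to\mathcal{O}^*_{X^{an}}$ is the same morphism in both cases. Commutativity of the right triangle is then the definition of $\phi$ as the composite.

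For the final assertion, I identify $\phi$ with the forgetful map. Let $\mathcal{A}$ have degree $r$, with associated $PGL_r$-torsor $P$. Forming $\mathcal{A}_{top}$ amounts to pushing the torsor along the sheaf maps $PGL_r(\mathcal{O}_{X^{an}})\to PGL_r(\mathscr{C}(-,\mathbb{C}))$. There is a commutative diagram of central extensions, with $1\to\mathcal{O}^*\to GL_r(\mathcal{O})\to PGL_r(\mathcal{O})\to1$ mapping to $1\to\mathscr{C}^*\to GL_r(\mathscr{C})\to PGL_r(\mathscr{C})\to 1$. So the class of $\mathcal{A}_{top}$ in $\mathrm{H}^2(X^{an},\mathscr{C}^*)$ is the image of $[\mathcal{A}]\in \mathrm{H}^2(X^{an},\mathcal{O}^*)$. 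Next, $\mathscr{C}$ is fine, so the topological exponential sequence gives $\mathrm{H}^2(X^{an},\mathscr{C}^*)\cong \mathrm{H}^3(X^{an},\mathbb{Z})$. That sequence receives a map from the holomorphic exponential sequence, so the boundary maps are compatible. Grothendieck's identification $\mathrm{Br}_{top}\cong\mathrm{H}^3(-,\mathbb{Z})_{tors}$ is precisely this boundary map applied to the topological $PGL_r$-torsor class, which gives the identification of $\phi$.

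The main obstacle is bookkeeping rather than substance. It consists of confirming that the boundary map in Grothendieck's isomorphism and the boundary map in the exponential sequence in (\ref{eq: exp}) agree, up to a universal sign convention coming from the $2\pi i$ normalization. If a sign discrepancy appears, it is harmless, since both maps are canonical up to the same fixed convention. I would fix conventions once via Čech cocycles on a good cover, where all the maps are explicit.
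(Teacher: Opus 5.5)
Your proposal is correct and follows the approach the paper intends. The paper gives no written proof: it calls the lemma immediate from the morphism of short exact sequences behind (\ref{eq: exp}), together with the comparison isomorphism, the Kummer sequence (with $\mathrm{Br}=\mathrm{Br}'$), and Grothendieck's boundary-map description of $\mathrm{Br}_{top}$, which is exactly what you unpack. Your sign caveat is unnecessary, because the exponential sequence in (\ref{eq: exp}) maps to its continuous analogue by the identity on $\underline{\mathbb{Z}}$, so the two connecting maps to $\mathrm{H}^3(X^{an},\mathbb{Z})$ agree.
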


For an abelian variety $X$ of dimension $g$, we observed above that $\mathrm{H}^k(X^{an},\mathbb{Z})$ is a free group of rank $\binom{2g}{k}$ and, so, it is also torsion free. Thus the topological Brauer group $\mathrm{Br}_{top}(X^{an})$ is trivial, and the top row of (\ref{eq: exp}) gives a concrete description of the group $\mathrm{Br}(X)[n]$.

\begin{rmk}
For a general smooth and projective variety $Y$ over $\mathbb{C}$, the Brauer classes $\alpha\in \mathrm{Br}(Y)$ that are in the kernel of the forgetful map $\mathrm{Br}(Y)\rightarrow \mathrm{Br}_{top}(Y^{an})$ are said to be topologically trivial. For any Azumaya algebra $\mathcal{A}$ with $\alpha=[\mathcal{A}]$ a topologically trivial class, it follows from Lemma \ref{lem: toptriv} that $\mathcal{A}_{top}$ is a neutral Azumaya algebra, i.e.\ isomorphic to $\mathcal{E}nd(V)$ for a complex topological vector bundle $V$ on $Y^{an}$.
\end{rmk}

Kresch \cite{MR1934439} observed that if a Brauer class, associated to a $2$-form $\beta\in \mathrm{H}^2(Y^{an}, \mathbb{Z}/2\mathbb{Z})$ through Lemma \ref{lem: toptriv} for a smooth projective complex variety $Y$, was represented by a quaternion Azumaya algebra, then this necessarily implies the existence of algebraic cycles on $Y$ which may be obstructed by the integral Hodge structure of $Y$. 

Almost two decades later, Hotchkiss \cite{hotchkiss2022hodgetheorytwistedderived} realized that knowledge of the explicit Hodge structures on algebraic invariants associated to $Y$ (e.g.\ on cohomology $\mathrm{H}^*(Y^{an},\mathbb{Q})$ or topological $K$-theory $K^{top}_0(Y^{an})_{\mathbb{Q}}$) could be used to prove more general period-index bounds for Brauer classes on a smooth projective complex variety than those described by Kresch in the period 2 case. Hotchkiss then used these ideas to produce new counterexamples to the integral Hodge conjecture by explicitly producing integral Hodge classes on relative Severi--Brauer schemes that were not algebraic.

Hotchkiss's method for producing these kinds of counterexamples to the integral Hodge conjecture was later formalized in work of de Jong and Perry \cite{dejong2022periodindexproblemhodgetheory}. Although their main motivation for formalizing this process was toward results on bounding the index of a Brauer class as a function of the dimension of the underlying variety and the period, de Jong and Perry actually produce an incredibly explicit method for bounding the index of a given topologically trivial Brauer class utilizing an explicitly describable Hodge structure.

\begin{thm}\label{thm: hodge}
Let $X$ be any smooth and projective variety over $\mathbb{C}$. Assume that $\beta\in \mathrm{H}^2(X^{an},\mathbb{Z}/n\mathbb{Z})$ defines a topologically trivial Brauer class $\alpha\in \mathrm{Br}(X)[n]$ and write $B=b/n\in \mathrm{H}^2(X^{an},\mathbb{Q})$ for the rational B-field gotten from any lift $b\in\mathrm{H}^2(X^{an},\mathbb{Z})$ of $\beta$. Then:
\begin{enumerate}
\item For any Azumaya algebra $\mathcal{A}$ of class $\alpha$, the associated Severi--Brauer scheme $\pi:\mathbf{SB}(\mathcal{A})\rightarrow X$ is topologically isomorphic to the projectivization $\mathbb{P}(E)$ of a complex topological vector bundle $E$ on $X$.
\item Under the identification of $\pi$ with the projective bundle map $\mathbb{P}(E)\rightarrow X$, there is a complex topological line bundle $\mathcal{O}_{\mathbb{P}(E)}(1)$. Writing $h=c_1(\mathcal{O}_{\mathbb{P}(E)}(1))\in \mathrm{H}^2(X^{an},\mathbb{Z})$ for the first Chern class of $\mathcal{O}_{\mathbb{P}(E)}(1)$, there is an induced isomorphism for any $d\in \mathbb{Z}$ \[\sum_j \pi^*(-) \cup h^j : \bigoplus_{j=0}^r \mathrm{H}^{d-2j}(X^{an},\mathbb{Z})\xrightarrow{\sim} \mathrm{H}^d(\mathbf{SB}(\mathcal{A})^{an},\mathbb{Z})\] where $r=\mathrm{deg}(\mathcal{A})-1$ is the relative dimension of $\pi$.
\item The class $nh+\pi^*b \in \mathrm{H}^2(\mathbf{SB}(\mathcal{A})^{an},\mathbb{Z})$ is algebraic, and there is an isomorphism of rational Hodge structures for any $d\in \mathbb{Z}$ \[\sum_j \pi^*(-) \cup (h+\pi^*B)^j : \bigoplus_{j=0}^r \mathrm{H}^{d-2j}(X^{an},\mathbb{Q})(-j)\xrightarrow{\sim} \mathrm{H}^d(\mathbf{SB}(\mathcal{A})^{an},\mathbb{Q})\] where $r$ is as above.
\end{enumerate}
\end{thm}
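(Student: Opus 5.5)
Part (1) follows from the Remark after Lemma \ref{lem: toptriv}. Since $\alpha$ is topologically trivial, $\mathcal{A}_{top}\cong\mathcal{E}nd(E)$ for a complex topological vector bundle $E$ of rank $\deg(\mathcal{A})$. The Severi--Brauer scheme is the bundle of minimal right ideals of $\mathcal{A}$. Topologically this is the bundle of minimal right ideals of $\mathcal{E}nd(E)$, and these correspond fiberwise to lines in $E$ (or in $E^\vee$, depending on convention; after dualizing this is harmless). Hence $\mathbf{SB}(\mathcal{A})^{an}\cong\mathbb{P}(E)$ over $X^{an}$. Part (2) is then the Leray--Hirsch theorem, or equivalently the projective bundle formula in singular cohomology, applied to $\mathbb{P}(E)\to X^{an}$ with the tautological class $h=c_1(\mathcal{O}_{\mathbb{P}(E)}(1))$. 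Note that $h$ lives in $\mathrm{H}^2(\mathbf{SB}(\mathcal{A})^{an},\mathbb{Z})$. Its powers $1,h,\dots,h^r$ restrict to a basis of the cohomology of each fiber $\mathbb{P}^r$, so the map is an isomorphism integrally.

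For part (3), I will first produce an algebraic line bundle on $\mathbf{SB}(\mathcal{A})$ whose topological first Chern class is $nh+\pi^*b$. Since $\alpha$ has period dividing $n$, $\pi^*\alpha=0$ and $\pi^*\mathcal{A}\cong\mathcal{E}nd(\mathcal{V})$ for a vector bundle $\mathcal{V}$ on $\mathbf{SB}(\mathcal{A})$. The tautological bundle of $\mathbf{SB}(\mathcal{A})$ is $\mathcal{O}(-1)$ twisted by the Brauer class, and its $n$-th tensor power descends to an honest algebraic line bundle $L$. The cleanest route is through gerbes. Represent $\alpha$ by the $\mu_n$-gerbe attached to $\beta$, so that $\mathcal{O}(1)$ becomes a twisted line bundle of weight one. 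Its $n$-th power is untwisted, and the obstruction to untwisting is the image of $\beta$ under $\mathrm{H}^2(\mu_n)\to\mathrm{H}^2(\mathcal{O}^*)$. I will compare with diagram (\ref{eq: exp}). The choice of lift $b$ of $\beta$ gives a topological trivialization of the gerbe, and under it the twisted line bundle $\mathcal{O}(1)$ becomes $\mathcal{O}_{\mathbb{P}(E)}(1)$ tensored with an $n$-th root of the topological line bundle with Chern class $\pi^*b$. Raising to the $n$-th power, $c_1(L)=nh+\pi^*b$, up to a sign convention fixed by the choice of $E$. This is the step I expect to be the main obstacle. The issue is matching the boundary maps of the two exponential sequences with the choice of $E$: the topological identification in (1) depends on the lift $b$, and changing $b$ by $n\cdot c$ changes $E$ to $E\otimes L_c$, which shifts $h$ by $-\pi^*c$. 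I would fix this by choosing $E$ compatibly with $b$, that is, by defining $E$ via the trivialization induced by $b$.

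Given this, $h+\pi^*B=\frac1n c_1(L)$ is a rational Hodge class of type $(1,1)$. The map $\sum_j\pi^*(-)\cup(h+\pi^*B)^j$ is therefore a morphism of rational Hodge structures from $\bigoplus_j\mathrm{H}^{d-2j}(X^{an},\mathbb{Q})(-j)$, because $\pi^*$ is a morphism of Hodge structures and cup product with a Hodge class of weight two shifts by Tate twist $(-1)$. It is an isomorphism of $\mathbb{Q}$-vector spaces. To see this, expand $(h+\pi^*B)^j=h^j+(\text{terms } \pi^*(\cdot)h^{i},\ i<j)$. The resulting map is unitriangular with respect to the isomorphism of (2) tensored with $\mathbb{Q}$, hence invertible. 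A bijective morphism of Hodge structures is an isomorphism, which completes part (3).
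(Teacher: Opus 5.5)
Your argument is correct in outline, and it proves what the paper only cites. The paper's proof of this statement is a single reference to \cite[Lemma 5.9]{dejong2022periodindexproblemhodgetheory}, so that reference is the only route to compare against.

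Parts (1) and (2) are the standard Leray--Hirsch argument. You are right that $h$ lives in $\mathrm{H}^2(\mathbf{SB}(\mathcal{A})^{an},\mathbb{Z})$, not in $\mathrm{H}^2(X^{an},\mathbb{Z})$ as the statement misprints. Your deduction of the Hodge isomorphism in (3) from the algebraicity of $nh+\pi^*b$ is exactly right: the map is unitriangular against the basis of (2), and a bijective morphism of Hodge structures is an isomorphism.

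The gerbe step also closes up as you intend:
\begin{enumerate}
\item Write $\mathcal{A}\cong\mathcal{E}nd(\mathcal{V})$ with $\mathcal{V}$ a weight-one twisted bundle on the $\mu_n$-gerbe of $\beta$.
\item The weight-one tautological line bundle on $\mathbb{P}(\mathcal{V})=\mathbf{SB}(\mathcal{A})$ has an untwisted, algebraic $n$-th power.
\item Topologically, weight-one twisted line bundles $T$ exist because $\beta$ lifts to $b$.
\item The Kummer boundary shows that $c_1(T^{\otimes n})$ is always an integral lift of $\beta$ (up to the usual sign conventions, compatibly with (\ref{eq: exp})).
\item Twisting $T$ by honest line bundles moves $c_1(T^{\otimes n})$ by $n\,\mathrm{H}^2(X^{an},\mathbb{Z})$, so you can achieve $c_1(T^{\otimes n})=b$ exactly. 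Then $E:=\mathcal{V}\otimes T^{-1}$ is the bundle to use.
\end{enumerate}

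Spelling this out exposes a dependence that the statement (and the bare citation) suppresses: (3) holds only for $E$ chosen compatibly with $b$. If you replace $E$ by $E\otimes M$ with $c_1(M)$ not a Hodge class, both the algebraicity of $nh+\pi^*b$ and the Hodge-ness of $h+\pi^*B$ fail. The citation buys brevity and defers all conventions to de Jong--Perry. Your route makes the choice of $E$ explicit.

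Two small corrections. First, $\pi^*\alpha=0$ holds for every Severi--Brauer scheme regardless of the period; the period hypothesis is what makes $\mathcal{O}(n)$, rather than $\mathcal{O}(1)$, untwisted. Second, the residual sign cannot be adjusted by twisting $E$ by line bundles. Such twists only move $b$ within $b+n\mathrm{H}^2(X^{an},\mathbb{Z})$, and flipping $b$ to $-b$ would require $2\alpha=0$. The sign is fixed by convention (lines versus rank-one quotients in $\mathbb{P}(-)$, equivalently $E$ versus $E^\vee$). One convention gives $nh+\pi^*b$ on the nose, and the other gives $nh-\pi^*b$.
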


\begin{proof}
This is \cite[Lemma 5.9]{dejong2022periodindexproblemhodgetheory}.
\end{proof}

It follows from Theorem \ref{thm: hodge} that, given any integral Hodge class $\gamma\in \mathrm{H}^{2k}(\mathbf{SB}(\mathcal{A})^{an}, \mathbb{Q})$, there exist rational Hodge classes $c_j\in \mathrm{H}^{2j}(X^{an},\mathbb{Q})$ such that the classes \begin{equation}\label{eq: polyhodge}p_i^{B,k}(c_0, c_1,...,c_i) := \binom{k}{i} B^ic_0 + \sum_{j=1}^i\binom{k-j}{i-j}B^{i-j}c_j\in \mathrm{H}^{2i}(X^{an},\mathbb{Q})\end{equation} are integral, i.e.\ in the image of $\mathrm{H}^{2i}(X^{an},\mathbb{Z})$ under the canonical map to $\mathrm{H}^{2i}(X^{an},\mathbb{Q})$. Indeed, if $\gamma$ is a Hodge class then it has an expansion \[\gamma = \pi^*(c_0)(h+\pi^*B)^k+\pi^*(c_1)(h+\pi^*B)^{k-1}+\cdots + \pi^*(c_m)(h+\pi^*B)^{k-m}\] where $m=\min\{k,\dim(X)\}$. Writing $\gamma$ in the basis given by powers of $h$ yields the formulae (\ref{eq: polyhodge}), which are necessarily integral if $\gamma$ is integral. This exact condition was used to give lower bounds on the index of $\alpha$ in \cite[Theorem 5.10]{dejong2022periodindexproblemhodgetheory}:

\begin{thm}[{\cite[Theorem 5.10]{dejong2022periodindexproblemhodgetheory}}]\label{thm: dpindexbounds} Let $X$ be a smooth, projective, complex variety. Let $\alpha\in \mathrm{Br}(X)$ be a topologically trivial Brauer class.

Then if $\mathrm{ind}(\alpha)$ divides $d>0$, there must exist rational Hodge classes $c_j\in \mathrm{H}^{2j}(X^{an},\mathbb{Q})$ for integers $1\leq j\leq m=\min\{d,\dim(X)\}$ such that the polynomials $p_i^{B,d}$ of (\ref{eq: polyhodge}) are integral for all $i=1,...,m$ with $c_0=1$.
\end{thm}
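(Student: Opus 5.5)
Since $\mathrm{ind}(\alpha)$ divides $d$, the plan is to produce an algebraic, hence integral Hodge, class $\gamma$ in $\mathrm{H}^{2d}(\mathbf{SB}(\mathcal{A})^{an},\mathbb{Q})$ of a controlled shape, and then read off the required $c_j$ from Theorem \ref{thm: hodge}(3). First I choose the representative $\mathcal{A}$ conveniently. Since $\mathrm{ind}(\alpha)=n$ divides $d$, standard facts on the index of Brauer classes on integral regular schemes give an Azumaya algebra of degree $n$, or at least some $\mathcal{A}$, in the class $\alpha$ whose generic fiber is a Severi--Brauer variety of index $n$. On the generic fiber $\mathbf{SB}(\mathcal{A}_{\eta})$ there is a zero-cycle of degree $n$, or equivalently a linear subvariety of codimension $n$ when $\deg\mathcal{A}=r+1>n$. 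I take its closure $Z\subset \mathbf{SB}(\mathcal{A})$ and multiply by $(d/n)$, or use $d/n$ copies. The result is a cycle whose class $\gamma=[Z]$ restricts on each generic fiber $\mathbb{P}^r$ to $d$ times the class of a point, or more generally to $d\cdot h^{k}$ in the appropriate degree.

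Next I expand $\gamma$ in the rational decomposition of Theorem \ref{thm: hodge}(3):
\[\gamma=\sum_{j\ge0}\pi^*(c_j)(h+\pi^*B)^{k-j}.\]
Every summand is a Hodge class, because $\gamma$ is algebraic and that decomposition is an isomorphism of rational Hodge structures whose projections are morphisms of Hodge structures. So each $c_j$ is a rational Hodge class on $X$. Restricting to a fiber identifies $c_0$ with the fiber degree. After normalizing to the codimension-$d$ cycle built from the degree-$d$ multisection, restriction to a fiber gives $c_0=1$ in the appropriate normalization, with $k=d$. I expect this normalization to be the main obstacle: we need the leading coefficient to be exactly $1$ with exponent $d$, rather than $d$ times a point class. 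I would try first to handle it by working with the class of the pushforward of a rank-$d$ twisted sheaf. Concretely, this means the degree-$d$ component of a Chern-type class, in the same way that $\binom{d}{i}$ arises as the coefficient of $h^{i}$ in $c(\mathcal{O}(1)^{\oplus d})$. Alternatively, one can choose $\gamma$ to be the class $[\mathbb{P}(V)]$ of a relative linear subspace defined over a dense open set, of relative codimension $d$ when $\deg\mathcal{A}\ge d$, replacing $\mathcal{A}$ by $M_{d'}(\mathcal{A})$ to ensure the degree is large enough. Its closure then has fiberwise class $h^{d}$, which gives $c_0=1$.

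Finally, I expand $(h+\pi^*B)^{d-j}$ by the binomial theorem and collect the coefficient of $h^{d-i}$. This coefficient is $\binom{d}{i}B^i+\sum_{j=1}^i\binom{d-j}{i-j}B^{i-j}c_j=p_i^{B,d}(1,c_1,\dots,c_i)$, using $\binom{d-j}{(d-j)-(d-i)}=\binom{d-j}{i-j}$. Since $\gamma$ is integral, Theorem \ref{thm: hodge}(2) shows that its integral coordinates in the basis $\{h^{l}\}$ are exactly these coefficients, so each $p_i^{B,d}$ is integral. Only $i\le m=\min\{d,\dim X\}$ contributes, since $\mathrm{H}^{2i}(X^{an})=0$ for $i>\dim X$. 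This completes the plan.
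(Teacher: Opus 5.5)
Your working route is exactly the paper's argument. You take the closure of a relative twisted-linear subvariety of relative codimension $d$ over a dense open $U\subset X$; it exists on the generic fiber because $\mathrm{ind}(\alpha)$ divides $\deg\mathcal{A}-d$, once $\mathcal{A}$ is enlarged so that $\deg\mathcal{A}>d$ (not merely $\deg\mathcal{A}\geq d$). Its fiberwise class $h^d$ forces $c_0=1$, and you then read off the coefficients of $h^{d-i}$ via Theorem \ref{thm: hodge}.

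You should discard your first construction, namely scaling a degree-$n$ zero-cycle or a codimension-$n$ linear subvariety by $d/n$ (these two are not ``equivalent''). It produces a class of codimension $r$ or $n$ with leading coefficient $d$ or $d/n$, so in general it yields Karpenko-type conditions as in Remark \ref{rmk: karpenko} rather than the integrality of $p_i^{B,d}(1,c_1,\dots,c_i)$.
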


\begin{proof}
The point is that, if $\mathrm{ind}(\alpha)$ divides $d$, then there exists a relative Severi--Brauer variety $Y$ defined over an open subset $U\subset X$ such that both $Y$ embeds in $\mathbf{SB}(\mathcal{A})|_U$, in relative codimension $d$ over $U$, and such that the induced embeddings on fibers over $U$ are linear embeddings. The closure of $Y$ in $\mathbf{SB}(\mathcal{A})$ then defines a cycle whose cohomology class is an integral Hodge class $\gamma\in \mathrm{H}^d(\mathbf{SB}(\mathcal{A})^{an},\mathbb{Q})$ with $c_0=1$.
\end{proof}

\begin{rmk}\label{rmk: classsolver}
If the Hodge structure on $X$ is known explicitly, then Theorem \ref{thm: dpindexbounds} can often be checked algorithmically. For example, if $X$ is a very general abelian variety, then the Hodge structure on $\mathrm{H}^{2k}(X^{an},\mathbb{Q})$ is determined by the principal polarization $\theta$. The rational Hodge space $\mathrm{Hdg}^k(X^{an}, \mathbb{Q})$ is the span of $\theta^k$ and the integral Hodge classes are the integer multiples of $\theta^k/k!$, cf.\ \cite[Theorem 17.4.1]{MR2062673}.

Suppose that $c_1,...,c_i$ are rational Hodge classes with $c_j\in \mathrm{H}^{2j}(X^{an},\mathbb{Q})$ for all $j=1,...,i$ and with some $c_0\in \mathbb{Z}$ fixed. We can write $c_j=a_j\theta^j$ for some rational numbers $a_j$. Expanding (\ref{eq: polyhodge}) in a basis for $\mathrm{H}^{2i}(X^{an},\mathbb{Z})$, we are asking if there exists a $\binom{2g}{i}\times i$ rational system $P_i$ solved by an $i$-vector $\bar{a}$ of rational numbers $a_j$ such that \begin{equation}\label{eq: algsmith}\bar{z} = P_i\cdot \bar{a}+\bar{c}_0\end{equation} where the vector $\bar{z}$ has integer coefficients and $\bar{c}_0$ is set from $\binom{d}{i}B^ic_0$. Let $K$ be any matrix whose rows form a $\mathbb{Q}$-basis for the orthogonal complement to the column space of $P_i$.

Multiplying by $K$ gives the linear system \[K\bar{z} = K\bar{c}_0\] which has an integral solution $\bar{z}$ if and only if $\bar{z}-\bar{c}_0$ is in the column space of $P_i$, i.e.\ if such an $\bar{a}$ solution exists. Clearing the denominators of $K$, we can assume that $K$ is integral. 

We can then algorithmically solve for any integer solutions to this equation: replacing $K$ with its Smith normal form $K=LDR$ with $L,R$ unimodular integer matrices, we get \[D(R\bar{z}) = L^{-1}K\bar{c}_0\] and since $D$ is diagonal, this amounts to solving equations $D_{j,j}\cdot r_j = s_j$ where $r_j=(R\bar{z})_j$ is free and $s_j=(L^{-1}K\bar{c}_0)_j$ is specified. Solutions $r_j$ then exist if and only if $D_{j,j}$ divides $s_j$.

Note also that this allows one to parametrize the integer lattice of solutions to (\ref{eq: algsmith}) assuming that at least one such solution exists. Indeed, the conditions $D_{j,j}$ uniquely specify the $r_j$ for all $1\leq j \leq \ell-t$ for some $t\geq 0$. The remaining $t\geq 0$ terms in $R\bar{z}$ are allowed to be free.
\end{rmk}

\begin{rmk}\label{rmk: karpenko}
In \cite[\S5.6]{dejong2022periodindexproblemhodgetheory}, the authors point out that any algebraic cycle that should exist on the generic fiber of the Severi--Brauer scheme $\pi:\mathbf{SB}(\mathcal{A})\rightarrow X$ can be used as a condition like in Theorem \ref{thm: dpindexbounds} above, but where $c_0$ is changed appropriately. For example, it's known that if the Brauer class $\alpha$ has index $n$, then the generic fiber of $\pi$ will contain a cycle of degree $n/\gcd(n,k)$ in codimension $k$, see \cite{MR1327303}, and these conditions may be more general than Theorem \ref{thm: dpindexbounds} alone.
\end{rmk}

\section{Reduced power operations and index bounds}\label{sec: obs}
In this section we give a refinement of the theorem of de Jong and Perry for bounding the index of a topologically trivial Brauer class on a smooth projective complex variety $X$. Afterwards, we consider the possible obstructions to the period-index conjecture granted by both the theorem of de Jong and Perry, and this new refinement, in the case of a very general abelian variety. We show that, in this particular case, these obstructions actually all vanish. We conclude this section with some more examples of the failure of the integral Hodge conjecture.

The starting point for our refinement is the observation that, for at least some canonical choice of a hypothetical class satisfying the de Jong--Perry obstruction of Theorem \ref{thm: dpindexbounds}, the rational Hodge classes that one finds $c_j\in \mathrm{H}^{2j}(X^{an},\mathbb{Z})$ are not allowed to be arbitrary.

\begin{lem}\label{lem: taut}
Let $X$ be a smooth and projective complex variety. Let $\alpha\in \mathrm{Br}(X)$ be a topologically trivial Brauer class. Let $\mathcal{A}$ be an Azumaya algebra in the class $\alpha$. Write $\pi:\mathbf{SB}(\mathcal{A})\rightarrow X$ for the associated Severi--Brauer scheme.

Then there is a vector bundle $\zeta_{\mathcal{A}}$ on $\mathbf{SB}(\mathcal{A})$ of rank $d=\mathrm{deg}(\mathcal{A})$ whose Chern classes satisfy the formula \[c_k(\zeta_{\mathcal{A}}) = \sum_{j=0}^k \binom{d-j}{k-j}\pi^*c_j(E)h^{k-j}\] for all $k\geq 0$. Here we use the identification of Theorem \ref{thm: hodge} that $\mathbf{SB}(\mathcal{A})$ is topologically $\mathbb{P}(E)$ and $h=c_1(\mathcal{O}_{\mathbb{P}(E)}(1))$.
\end{lem}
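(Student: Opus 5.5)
The plan is to take $\zeta_{\mathcal{A}}$ to be the tautological bundle on the Severi--Brauer scheme. Recall that $\mathbf{SB}(\mathcal{A})$ parametrizes right ideals of rank $d$ (i.e.\ of reduced rank one) in $\mathcal{A}$. Over $\mathbf{SB}(\mathcal{A})$ there is a canonical subbundle $\mathcal{J}\subset \pi^*\mathcal{A}$ of rank $d$, the universal right ideal. Étale locally on $X$ we have $\mathcal{A}\cong \mathcal{E}nd(V)$ and $\mathbf{SB}(\mathcal{A})\cong\mathbb{P}(V)$, and then $\mathcal{J}\cong V\otimes\mathcal{O}(-1)$ or its dual, according to the convention for $\mathbb{P}$. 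The descent data for $V$ fail to glue only up to scalars, and these scalars cancel against the twist by $\mathcal{O}(-1)$. Hence $\mathcal{J}$ is a genuine algebraic vector bundle of rank $d$ on $\mathbf{SB}(\mathcal{A})$. I would set $\zeta_{\mathcal{A}}$ to be $\mathcal{J}$ or $\mathcal{J}^\vee$, choosing whichever makes the signs below come out as stated.

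Next compute topologically. Since $\alpha$ is topologically trivial, $\mathcal{A}_{top}\cong\mathcal{E}nd(E)$ by the remark after Lemma \ref{lem: toptriv}, and this identification is what underlies Theorem \ref{thm: hodge}(1), $\mathbf{SB}(\mathcal{A})^{an}\cong\mathbb{P}(E)$. Under this identification the universal ideal becomes the bundle of endomorphisms whose image lies in the tautological line. As a topological bundle this is $\mathcal{H}om(E,\mathcal{O}(-1))$, or with the other convention $\pi^*E^{\vee}\otimes\mathcal{O}(-1)$ or $\pi^*E\otimes\mathcal{O}(1)$. With the appropriate dualization, $\zeta_{\mathcal{A}}$ is therefore topologically $\pi^*E\otimes\mathcal{O}_{\mathbb{P}(E)}(1)$, where $E$ has rank $d$.

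The Chern classes then follow from the standard formula for twisting a rank $d$ bundle by a line bundle with first Chern class $h$:
\[c_k(\pi^*E\otimes L)=\sum_{j=0}^k\binom{d-j}{k-j}c_j(\pi^*E)\,h^{k-j}.\]
This is proved by the splitting principle, expanding $\prod_i(1+e_i+h)$. Combined with naturality, $c_j(\pi^*E)=\pi^*c_j(E)$, it gives the formula in the statement.

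The main obstacle is the bookkeeping in identifying $\zeta_{\mathcal{A}}$ topologically with $\pi^*E\otimes\mathcal{O}(1)$ rather than $\pi^*E^\vee\otimes\mathcal{O}(-1)$. Four choices interact here: the convention for $\mathbb{P}(E)$ (lines versus quotients), the choice of $E$ versus $E^\vee$ (both have the same endomorphism algebra), and whether one uses left or right ideals or their duals. The two candidate bundles have Chern classes differing by the signs $(-1)^k$. I would fix the convention used in Theorem \ref{thm: hodge}, namely $h=c_1(\mathcal{O}_{\mathbb{P}(E)}(1))$ with $nh+\pi^*b$ algebraic. I would then replace $E$ by $E^\vee$ if necessary, which is harmless since the lemma refers to $E$ only through this identification. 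Finally I would pick $\zeta_{\mathcal{A}}=\mathcal{J}$ or $\mathcal{J}^\vee$ so that the topological bundle is exactly $\pi^*E\otimes\mathcal{O}(1)$. A local check on a fiber $\mathbb{P}^{d-1}$ pins down the sign: the bundle should restrict to $\mathcal{O}(1)^{\oplus d}$, with $c_1=dh$ there.
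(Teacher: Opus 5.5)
Your proposal is correct and is essentially the paper's argument. The paper takes $\zeta_{\mathcal{A}}$ to be the dual of the tautological bundle of rank-$d$ ideals, which it also describes as the middle term of the relative Euler sequence. It identifies that ideal bundle topologically, via $\mathcal{A}_{top}\cong\mathcal{E}nd(E)$, with $\mathcal{H}om(\pi^*E,\mathcal{O}_{\mathbb{P}(E)}(-1))=\pi^*E^\vee\otimes\mathcal{O}_{\mathbb{P}(E)}(-1)$, and then expands $c_k(\pi^*E\otimes\mathcal{O}_{\mathbb{P}(E)}(1))$, exactly as you do.

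Your convention bookkeeping is more careful than the paper's, which is loose about left versus right ideals, but two small points need care. First, your fiber check only decides between $\mathcal{J}$ and $\mathcal{J}^\vee$; it cannot distinguish $E$ from $E^\vee$. Second, $\mathcal{E}nd(E^\vee)\cong\mathcal{E}nd(E)^{op}$, not $\mathcal{E}nd(E)$, so the two do not in general have the same endomorphism algebra. The $E$ versus $E^\vee$ choice is instead settled, as you indicate, by defining $E$ through the identification $\mathbf{SB}(\mathcal{A})^{an}\cong\mathbb{P}(E)$ as a bundle of lines.
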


\begin{proof}
The bundle $\zeta_{\mathcal{A}}$ in the lemma statement is the middle term in the Euler sequence for the Severi--Brauer variety $\mathbf{SB}(\mathcal{A})$ over $X$, \[0\rightarrow \mathcal{O}_{\mathbf{SB}(\mathcal{A})}\rightarrow \zeta_{\mathcal{A}}\rightarrow \mathcal{T}_{\mathbf{SB}(\mathcal{A})/X}\rightarrow 0.\] The dual bundle $\zeta_{\mathcal{A}}^\vee$ is defined in such a way that the fiber over a complex point of $s\in \mathbf{SB}(\mathcal{A})$ corresponds tautologically to a left-ideal of rank $d$ inside the ring $\mathcal{A}_{\pi(s)} \otimes_{\mathcal{O}_{X,\pi(s)}} \mathbb{C}$. Choosing an isomorphism $\mathcal{A}_{top}\cong\mathcal{E}nd(E)$ for a complex topological vector bundle $E$, one identifies these left-ideals with the collection of matrices whose columns span the line $L\subset \pi^*(E)_{s}$ determined by $s\in \pi^{-1}(\pi(s))$. One identifies the bundle $\pi^*(E)^\vee\otimes \mathcal{O}_{\mathbb{P}(E)}(-1)$ similarly as having fibers over $s$ the vector space $\mathrm{Hom}(\pi^*(E)_s,L)$, which is exactly the above ideal.

Hence, for all $k\geq 0$, there is a relation $c_k(\zeta_{\mathcal{A}})=c_k(\pi^*(E)\otimes \mathcal{O}_{\mathbb{P}(E)}(1))$ inside of the group $\mathrm{H}^{2k}(\mathbf{SB}(\mathcal{A})^{an},\mathbb{Z})$. Expanding the Chern class of the tensor product using known formulas proves the claim.
\end{proof}

\begin{prop}\label{prop: taut}
Let $X$ be a smooth and projective complex variety. Let $\alpha\in \mathrm{Br}(X)$ be a topologically trivial Brauer class. Let $\mathcal{A}$ be an Azumaya algebra in the class $\alpha$. Write $\pi:\mathbf{SB}(\mathcal{A})\rightarrow X$ for the associated Severi--Brauer scheme.

Then there exists a coherent $\mathcal{O}_{\mathbf{SB}(\mathcal{A})}$-module $\mathcal{M}$ such that:
\begin{enumerate}
\item there exists an open subset $U\subset X$ such that $\mathcal{M}_U$ is locally free of rank $\mathrm{ind}(\alpha)$,
\item\label{it: cind} for each $k\geq 0$, the Chern class $c_k(\mathcal{M})$ is an algebraic class that restricts on the fibers of $\pi$ as a class of degree $\binom{\mathrm{ind}(\alpha)}{k}$.
\end{enumerate}
In particular, $c_{\mathrm{ind}(\alpha)}(\mathcal{M})$ is an algebraic class that restricts on the fibers of $\pi:\mathrm{SB}(\mathcal{A})\rightarrow X$ as the class of a linear space.
\end{prop}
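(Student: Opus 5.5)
Write $n=\mathrm{ind}(\alpha)$ and $d=\deg(\mathcal{A})$. The plan is to build $\mathcal{M}$ from a minimal-degree Azumaya algebra Brauer equivalent to $\mathcal{A}$ at the generic point of $X$, transported onto $\mathbf{SB}(\mathcal{A})$ through the tautological bundle $\zeta_{\mathcal{A}}$ of Lemma \ref{lem: taut}, and then to extend it as a coherent sheaf.

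\emph{Generic construction.} Over the function field $K=\mathbb{C}(X)$, the algebra $A_K=\mathcal{A}_K$ is isomorphic to $M_{d/n}(D)$ for a division algebra $D$ of degree $n$. Hence there is a left $A_K$-module $P$ that is simple, of $K$-dimension $dn$; concretely one can take $P=D^{d/n}$, the row vectors. Spreading out gives a dense open $U\subset X$ and a locally free left $\mathcal{A}_U$-module $\mathcal{P}$ with $\mathcal{A}_U\cong \mathcal{E}nd_{\mathcal{D}}(\mathcal{P})$ for an Azumaya algebra $\mathcal{D}$ of degree $n$ on $U$. On $\mathbf{SB}(\mathcal{A})$, $\pi^*\mathcal{A}$ is split, since it is $\mathcal{E}nd(\zeta_{\mathcal{A}}^\vee)$ or $\mathcal{E}nd(\zeta_{\mathcal{A}})$ depending on convention. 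By Morita theory, $\pi^*\mathcal{A}$-modules correspond to $\mathcal{O}$-modules via $\mathcal{N}\mapsto \mathcal{H}om_{\pi^*\mathcal{A}}(\zeta_{\mathcal{A}}^{(\vee)},\mathcal{N})$. I therefore set
\[\mathcal{M}_U:=\mathcal{H}om_{\pi^*\mathcal{A}}(\zeta_{\mathcal{A}}^{(\vee)},\pi^*\mathcal{P}^{\vee\text{ or }}),\]
arranging duals so that the ranks work out. This sheaf is locally free of rank $\mathrm{rk}(\mathcal{P})/d=dn/d=n$. Finally, I take $\mathcal{M}$ to be any coherent extension of $\mathcal{M}_U$ to $\mathbf{SB}(\mathcal{A})$, which exists by the standard extension of coherent sheaves on noetherian schemes (EGA I, 9.4.8). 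This proves (1).

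\emph{Chern classes and fiber degrees.} Since $\mathbf{SB}(\mathcal{A})$ is smooth, $\mathcal{M}$ has a finite locally free resolution, so its Chern classes are defined in the Chow ring and are algebraic; their cycle classes are therefore algebraic. Because $\pi$ is flat, restriction to a fiber over a point $x\in U$ commutes with taking Chern classes. There $\mathcal{P}_x\cong(\mathbb{C}^d)^{\oplus n}$ as a module over $\mathcal{A}_x\cong M_d(\mathbb{C})$, so $\mathcal{M}|_{\pi^{-1}(x)}\cong\mathcal{O}_{\mathbb{P}^{d-1}}(1)^{\oplus n}$, or $\mathcal{O}(-1)^{\oplus n}$ under the other convention. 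I will fix signs so that it is $\mathcal{O}(1)^{\oplus n}$, since $\zeta_{\mathcal{A}}$ restricts to $\mathcal{O}(1)^{\oplus d}$. Then $c_k$ restricts to $\binom{n}{k}H^k$, which has degree $\binom nk$; in particular $c_n$ restricts to $H^n$, the class of a linear subspace. Since the degree of the fiber restriction is locally constant in $x$ (for instance by evaluating with $h^{d-1-k}$ through Theorem \ref{thm: hodge}), the statement holds on all fibers. This proves (2).

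\emph{Main obstacle.} The delicate point is the Morita bookkeeping: the sidedness and duality conventions relating $\zeta_{\mathcal{A}}$ (whose dual parametrizes left ideals) to $\pi^*\mathcal{A}$-modules, and choosing the twist so that the fiber restriction is $\mathcal{O}(1)^{\oplus n}$ rather than $\mathcal{O}(-1)^{\oplus n}$. The sign matters for the degree statement when $k$ is odd. If the sign comes out wrong, I would replace $\mathcal{M}$ by its dual on $U$ before extending. This keeps the rank equal to $n$, and the dual of a coherent sheaf extension is still coherent.
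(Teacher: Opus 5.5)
Your argument is correct, but it swaps the order of the two steps in the paper's construction.

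The paper first extends the generic simple module over all of $X$ as an $\mathcal{A}$-module: it takes the $\mathcal{A}$-span $\mathcal{M}'$ of an $\mathcal{O}_X$-lattice $\mathcal{N}\subset j_*M$. Only then does it apply the Morita functor $\pi^*(-)\otimes_{\pi^*\mathcal{A}}\zeta^\vee_{\mathcal{A}}$ on all of $\mathbf{SB}(\mathcal{A})$ and dualize. You instead apply Morita only over $\pi^{-1}(U)$ and then take an arbitrary coherent extension via EGA~I~9.4.8.

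For (1) and (2) this is fine. Your verification of (2) is more explicit than the paper's one-line appeal to Lemma~\ref{lem: taut}: you compute $\mathcal{M}|_{\pi^{-1}(x)}\cong\mathcal{O}(1)^{\oplus n}$ for $x\in U$, and then use that the fiber degree is constant over connected $X$. One small correction: restriction to that fiber commutes with Chern classes because $\mathcal{M}$ is locally free near it, not because $\pi$ is flat.

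What the paper's order buys is Theorem~\ref{thm: chernm}. That theorem is stated for the module built in this proof. It uses that $[\mathcal{M}]$ is dual to the Quillen image of $[\mathcal{M}']\in K_0(X,\mathcal{A})$, hence topologically of the form $\pi^*(y^\vee)\cdot[\mathcal{O}_{\mathbb{P}(E)}(1)]$.

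An arbitrary extension can change the $K$-class by terms supported on $\pi^{-1}(X\setminus U)$ that come from other Quillen summands. For example, replace $\mathcal{M}$ by $\mathcal{M}\oplus\pi^*\mathcal{O}_Z$, with $Z\subset X\setminus U$ an ample divisor. This leaves $\mathcal{M}_U$ unchanged. To match $c_1$ you must adjust $c_1(z)$ by $[Z]$, but then $c_2$ still differs from the prediction of (\ref{eq: chern}) by $h\,\pi^*[Z]\neq 0$ (when $d\geq 2$). No choice of $z$ can absorb this.

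So if you want your $\mathcal{M}$ to feed into the refined obstruction, extend $\mathcal{P}$ to a coherent $\mathcal{A}$-module on $X$ before applying Morita. That recovers the paper's construction.
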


\begin{proof}
Set $d=\mathrm{deg}(\mathcal{A})$, $n=\mathrm{ind}(\alpha)$, and let $\eta\in X$ be the generic point. The central simple algebra $\mathcal{A}_{\mathbb{C}(\eta)}\cong M_{d/n}(D)$ is a matrix algebra over a $\mathbb{C}(\eta)$-division algebra $D$. Let $M$ be the simple right $\mathcal{A}_{\mathbb{C}(\eta)}$-module $D^{d/n}$, which has dimension $nd$.

Picking a basis $\{e_i\}_{i=1}^{nd}$ for $M$, we can construct an $\mathcal{O}_X$-submodule $\mathcal{N}\subset j_*M$, where $j:\eta\rightarrow X$ is the inclusion, as the image sheaf of the canonical map $\mathcal{O}_X^{nd}\rightarrow j_*M$. Then $\mathcal{N}$ is coherent since $X$ is Noetherian, and by construction $\mathcal{N}_{\mathbb{C}(\eta)}\cong M$.

Consider the multiplication map $\mathcal{N}\otimes_{\mathcal{O}_X} \mathcal{A}\rightarrow j_*M$. The image of this map is a coherent sheaf $\mathcal{M}'$, again since $X$ is Noetherian, which is naturally a right $\mathcal{A}$-module. Let $\zeta^\vee_{\mathcal{A}}$ be the tautological bundle of left $\pi^*\mathcal{A}$-ideals of rank $d$ on $\mathbf{SB}(\mathcal{A})$. Set $\mathcal{M}''= \pi^*\mathcal{M}'\otimes_{\pi^*{\mathcal{A}}} \zeta^\vee_\mathcal{A}$.

One can check that, over the generic fiber $\pi^{-1}(\eta)=\mathbf{SB}(\mathcal{A}_{\mathbb{C}(\eta)})$, the sheaf $\mathcal{M}''_{\pi^{-1}(\eta)}$ is a sub-bundle of $\zeta^\vee_{\mathcal{A}_{\mathbb{C}(\eta)}}$ of rank $n$. From this observation and Lemma \ref{lem: taut}, we get the claim by taking $\mathcal{M}:=(\mathcal{M}'')^\vee$.
\end{proof}

We want to describe the Chern classes $c_k(\mathcal{M})$ of the coherent sheaf $\mathcal{M}$ constructed in Proposition \ref{prop: taut} for $0\leq k \leq \mathrm{ind}(\alpha)$ in more detail. To do this, we note that $[\mathcal{M}]\in K_0(\mathbf{SB}(\mathcal{A}))$ is, by construction, the dual of the image of the class $[\mathcal{M}']$ under the Quillen transformation on $K$-groups $K_0(X,\mathcal{A})\rightarrow K_0(\mathbf{SB}(\mathcal{A}))$. This map is compatible with the natural maps to topological $K$-theory, so we find that there is a class $x\in K^{top}_0(X^{an},\mathcal{A}_{top})$ which maps to $[\mathcal{M}]_{top}\in K_0^{top}(\mathbf{SB}(\mathcal{A})^{an})$.

Since $\alpha$ is topologically trivial, the class $x$ comes from $K_0^{top}(X^{an})$ via the natural transformation sending a vector bundle $F$ to $F\otimes E$, where $\mathcal{A}_{top}\cong \mathcal{E}nd(E)$ for a complex topological bundle $E$. Altogether, there is a class $y=[F_0]-[F_1]\in K_0^{top}(X^{an})$ for two complex topological vector bundles $F_0,F_1$ on $X^{an}$ so that \[c_t(\mathcal{M})=c_t(\pi^*(F_0\otimes E)^\vee\otimes_{\mathcal{E}nd(E)} \zeta_{\mathcal{E}nd(E)})\cdot c_t(\pi^*(F_1\otimes E)^\vee\otimes_{\mathcal{E}nd(E)} \zeta_{\mathcal{E}nd(E)})^{-1},\] where $c_t$ denotes the total Chern polynomial. Using the identification $\zeta_{\mathcal{E}nd(E)}^\vee \cong \pi^*E^\vee\otimes \mathcal{O}_{\mathbb{P}(E)}(-1)$, this simplifies to \begin{align*}c_t(\mathcal{M})&=c_t(\pi^*F_0^\vee \otimes \mathcal{O}_{\mathbb{P}(E)}(1))c_t(\pi^*F_1^\vee \otimes \mathcal{O}_{\mathbb{P}(E)}(1))^{-1}\\ &= c_t(\pi^*(y^\vee)\cdot [\mathcal{O}_{\mathbb{P}(E)}(1)])\end{align*} In particular, for each $k\geq 0$ we have \[ c_k(\mathcal{M})=\sum_{j=0}^k \binom{n-j}{k-j}\pi^*c_j(y^\vee)h^{k-j}\] for a class $y\in K^{top}_0(X^{an})$ and where $h=c_1(\mathcal{O}_{\mathbb{P}(E)}(1))$. We have proved:

\begin{thm}\label{thm: chernm}
Let $X$ be a smooth and projective complex variety. Let $\alpha\in \mathrm{Br}(X)$ be a topologically trivial Brauer class. Let $\mathcal{A}$ be an Azumaya algebra in the class $\alpha$. Write $\pi:\mathbf{SB}(\mathcal{A})\rightarrow X$ for the associated Severi--Brauer scheme. 

Let $\mathcal{M}$ be the coherent $\mathcal{O}_{\mathbf{SB}(\mathcal{A})}$-module constructed in the proof of Proposition \ref{prop: taut}. Then there exists a class $z\in K_0^{top}(X^{an})$ such that \begin{equation}\label{eq: chern}c_k(\mathcal{M})=\sum_{j=0}^k \binom{n-j}{k-j}\pi^*c_j(z)h^{k-j}\end{equation} where $h=c_1(\mathcal{O}_{\mathbb{P}(E)}(1))$ under the topological identification of $\mathbf{SB}(\mathcal{A})$ with $\mathbb{P}(E)$ described in Theorem \ref{thm: hodge}. $\hfill\square$
\end{thm}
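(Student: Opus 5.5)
The statement of Theorem \ref{thm: chernm} is essentially what the discussion preceding it establishes, so the plan is to organize that computation into a clean argument. Set $n=\mathrm{ind}(\alpha)$ and $d=\mathrm{deg}(\mathcal{A})$. Recall the construction from Proposition \ref{prop: taut}: a coherent right $\mathcal{A}$-module $\mathcal{M}'$ on $X$ whose generic fiber is the simple module of dimension $nd$. Set $\mathcal{M}=(\pi^*\mathcal{M}'\otimes_{\pi^*\mathcal{A}}\zeta^\vee_{\mathcal{A}})^\vee$, where duality is taken in $K_0$ on the smooth variety $\mathbf{SB}(\mathcal{A})$. This identifies $[\mathcal{M}]$ as the dual of the image of $[\mathcal{M}']$ under the Quillen map $K_0(X,\mathcal{A})\to K_0(\mathbf{SB}(\mathcal{A}))$, $N\mapsto \pi^*N\otimes_{\pi^*\mathcal{A}}\zeta^\vee_{\mathcal{A}}$. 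Because $X$ is smooth, $\mathcal{M}'$ has a finite resolution by locally projective $\mathcal{A}$-modules. The functor is exact on such modules, so Chern classes of $\mathcal{M}$ are computed from a genuine $K_0$-class of bundles.

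Next, pass to topology. The Quillen map is compatible with the forgetful maps to $K^{top}_0(X^{an},\mathcal{A}_{top})$ and $K_0^{top}(\mathbf{SB}(\mathcal{A})^{an})$. Topological triviality gives $\mathcal{A}_{top}\cong\mathcal{E}nd(E)$, and Morita equivalence identifies $K_0^{top}(X^{an},\mathcal{E}nd(E))$ with $K_0^{top}(X^{an})$ via $F\mapsto F\otimes E$. Hence $[\mathcal{M}']_{top}=[F_0\otimes E]-[F_1\otimes E]$ for some $y=[F_0]-[F_1]$. Then use the identification in the proof of Lemma \ref{lem: taut}, namely $\zeta^\vee_{\mathcal{E}nd(E)}\cong\pi^*E^\vee\otimes\mathcal{O}_{\mathbb{P}(E)}(-1)$. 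This gives
\[
\pi^*(F\otimes E)\otimes_{\pi^*\mathcal{E}nd(E)}\zeta^\vee\cong \pi^*F\otimes\mathcal{O}_{\mathbb{P}(E)}(-1).
\]
After dualizing, $[\mathcal{M}]_{top}=\pi^*(y^\vee)\cdot[\mathcal{O}_{\mathbb{P}(E)}(1)]$.

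Finally, apply the splitting principle to compute the Chern classes. A virtual class $z=y^\vee$ of rank $r$ tensored with a line bundle of first Chern class $h$ satisfies
\[
c_k(z\otimes L)=\sum_j\binom{r-j}{k-j}c_j(z)h^{k-j}.
\]
It remains to see that the rank is $r=n$. Over the generic point, $\mathcal{M}$ is locally free of rank $n$ by Proposition \ref{prop: taut}, and rank is constant on the connected space, so the virtual rank of $z$ is $n$. Taking $z=y^\vee$ and using that Chern classes of coherent sheaves agree with those of their topological $K$-classes then yields (\ref{eq: chern}).

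The main obstacle is the compatibility step: justifying that the algebraic Quillen/Morita map on $K_0$ of $\mathcal{A}$-modules commutes with the forgetful map to topological twisted $K$-theory. This needs the locally projective resolution of $\mathcal{M}'$ and the fact that the Chern classes of coherent sheaves factor through $K_0^{top}$. I would handle it by reducing to locally free $\mathcal{A}$-modules, where the construction is visibly fiberwise and continuous.
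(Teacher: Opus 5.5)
Your proposal is correct and follows the paper's argument essentially step by step: the Quillen map $K_0(X,\mathcal{A})\to K_0(\mathbf{SB}(\mathcal{A}))$ and its compatibility with topological $K$-theory, Morita reduction using $\mathcal{A}_{top}\cong\mathcal{E}nd(E)$, the identification $\zeta^\vee\cong\pi^*E^\vee\otimes\mathcal{O}_{\mathbb{P}(E)}(-1)$, and the Chern class formula for twisting a virtual class by a line bundle. Your additional remarks fill in details the paper leaves implicit, namely the locally projective resolution of $\mathcal{M}'$, reading the dual as a $K_0$-dual, and checking that the virtual rank of $z$ is $n$.
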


\begin{rmk}
Note that, in (\ref{eq: chern}), the formula \[\binom{n-j}{k-j}=\frac{(n-j)(n-j-1)\cdots (n-k+1)}{(k-j)!}\] is a generalized binomial coefficient. So, if $k-j<0$, then this is $0$. If $k=n$, then it is $1$. And if $n-j\leq 0$, this formula still makes sense.
\end{rmk}

These observations are what will allow our refinement:

\begin{prop}\label{prop: obs2}
Let $X$ be a smooth and projective complex variety. Let $z\in K_0^{top}(X^{an})$ be any class. Then for all $j\geq 0$ we have \begin{equation}\label{eq: steenrod}\mathrm{Sq}^{2j}(c_i(z))= \sum_{k=0}^j\binom{i-j+k-1}{k}c_{j-k}(z)c_{i+k}(z)\end{equation} where $\mathrm{Sq}^{2j}:\mathrm{H}^{2i}(X^{an},\mathbb{Z}/2\mathbb{Z})\rightarrow \mathrm{H}^{2i+2j}(X^{an},\mathbb{Z}/2\mathbb{Z})$ is the $j$th Steenrod operation modulo $2$. Similarly, for any odd prime $p$, we have \begin{equation}\label{eq: powerop}\mathrm{P}^j(c_i(z))=Q_{p,i,j}(c_1(z),...,c_{i+j(p-1)}(z))\end{equation} for a uniquely determined weighted homogeneous polynomial $Q_{p,i,j}$ of degree $i+j(p-1)$, where the variables $c_i(z)$ are weighted of degree $i$, and where $\mathrm{P}^j:\mathrm{H}^{2i}(X^{an},\mathbb{Z}/p\mathbb{Z})\rightarrow \mathrm{H}^{2i+2j(p-1)}(X^{an},\mathbb{Z}/p\mathbb{Z})$ is the $j$th reduced power operation modulo $p$.
\end{prop}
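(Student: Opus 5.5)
This is Wu's formula together with the splitting principle. The plan is to reduce to the universal case and then to a sum of line bundles. I will use the Cartan formula and naturality of the Steenrod operations, and the fact that $\mathrm{Sq}^{2j}$ and $\mathrm{P}^j$ are stable cohomology operations, hence additive.

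\emph{Reduction to a genuine bundle.} Since $X^{an}$ is a finite CW complex, write $z=[F_0]-[F_1]$. Adding a complement $F_1\oplus G\cong \underline{\mathbb{C}}^N$ replaces $z$ by $[F_0\oplus G]-N$, which has the same Chern classes as the genuine bundle $F:=F_0\oplus G$ of some rank $r$. So it suffices to treat $c_i(F)$. By naturality, $F$ is pulled back from the universal bundle on $BU(r)$ (or on a finite Grassmannian approximating it). It is therefore enough to prove the identity in $\mathrm{H}^*(BU(r),\mathbb{Z}/p)=\mathbb{Z}/p[c_1,\dots,c_r]$. This ring is polynomial, so any class there is a \emph{unique} polynomial in the $c_k$; this gives the uniqueness of $Q_{p,i,j}$. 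Weighted homogeneity is forced by degree: $\mathrm{P}^j$ raises cohomological degree by $2j(p-1)$, and $c_k$ has degree $2k$.

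\emph{Splitting principle.} Pull back further to $BT=(\mathbb{CP}^\infty)^r$, where the pullback map is injective and $c_i\mapsto e_i(x_1,\dots,x_r)$, the elementary symmetric polynomial in the degree $2$ generators. On degree $2$ classes we have $\mathrm{Sq}^2x=x^2$, $\mathrm{Sq}^{2l}x=0$ for $l\ge2$, and $\mathrm{P}(x)=x+x^p$. By the Cartan formula the total operation is multiplicative, so
\[
\mathrm{Sq}(e_i)=e_i\bigl(x_1(1+x_1),\dots,x_r(1+x_r)\bigr),\qquad \mathrm{P}(e_i)=e_i\bigl(x_1+x_1^p,\dots\bigr).
\]
These are symmetric polynomials, hence polynomials in the $e_k$. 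Taking the degree $2i+2j(p-1)$ part defines $Q_{p,i,j}$, and this settles \eqref{eq: powerop}.

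\emph{Identifying the mod $2$ coefficients.} The main obstacle is showing that the degree $i+j$ part of $e_i(x_1+x_1^2,\dots)$ equals $\sum_{k=0}^j\binom{i-j+k-1}{k}e_{j-k}e_{i+k}$ mod $2$. This is exactly Wu's formula for Stiefel--Whitney classes, transported along $w_{2i}\leftrightarrow c_i$ mod $2$; the doubling of degrees is harmless because $\mathrm{Sq}^{odd}$ vanishes on these even classes. I would simply cite Wu's formula (e.g.\ Milnor--Stasheff, Problem 8-A). If a self-contained argument is wanted, I would first check the identity for $e_i$ in small numbers of variables, then induct on $r$ using $e_i(x_1,\dots,x_r)=e_i(x_1,\dots,x_{r-1})+x_r e_{i-1}(x_1,\dots,x_{r-1})$ together with the Cartan formula and Pascal's rule for the binomials. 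Keeping track of the generalized binomial coefficients, which may have negative upper entry, is where care is needed.

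Finally, in the application the classes $C_i$ are integral, so the mod $2$ reductions make sense.
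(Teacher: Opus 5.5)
Your proposal is correct and is essentially the paper's argument: rewrite $z=[F]-N$ so that $c_t(z)=c_t(F)$, then invoke the classical formulas for the Chern classes of an honest vector bundle. The paper simply cites those formulas, whereas you also sketch their splitting-principle derivation; the coefficients match Wu's because $\binom{j-i}{k}=(-1)^k\binom{i-j+k-1}{k}$. One small point to tidy is the uniqueness of $Q_{p,i,j}$: first add trivial summands so that $r\geq i+j(p-1)$, or work in $\mathrm{H}^*(BU,\mathbb{Z}/p\mathbb{Z})$, since otherwise the $c_k$ with $k>r$ vanish in $\mathrm{H}^*(BU(r),\mathbb{Z}/p\mathbb{Z})$ and the polynomial is not determined independently of $F$.
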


\begin{proof}
These formulae are classical for Chern classes of a vector bundle, see \cite[Theorem 11.3]{MR58213}. In general, we can always write $z=[F]-N$ for some vector bundle $F$ and some integer $N\geq 0$. The claim is then immediate from the equalities $c_t(z)=c_t(F)c_t(N)^{-1}=c_t(F)$.
\end{proof}

In particular, if $\alpha$ is a topologically trivial Brauer class on a smooth projective complex variety $X$ such that the cohomology ring $\mathrm{H}^*(X^{an},\mathbb{Z})$ is torsion-free, then if $\mathrm{ind}(\alpha)$ divides $d$, there must exist rational Hodge classes $c_j\in \mathrm{H}^{2j}(X^{an},\mathbb{Q})$ such that the polynomials $p_i^{B,d}(c_1,...,c_i)$ are integral classes for all $i\leq d$, as in Theorem \ref{thm: dpindexbounds}. 

But, moreover, for \textit{at least one collection} of rational Hodge classes $c_j\in \mathrm{H}^{2j}(X^{an},\mathbb{Q})$ making all of the polynomials $p_i^{B,d}(c_1,...,c_i)$ integral, the resulting classes $p_i^{B,d}(c_1,...,c_i)$ are required to be the Chern classes of some element $z\in K_0^{top}(X^{an})$ by Theorem \ref{thm: chernm}. Hence, for such classes $\{c_j\}_{j=1}^\infty$, the reduced integral classes $p_i^{B,d}(c_1,...,c_i)\in \mathrm{H}^{2i}(X^{an},\mathbb{Z}/p\mathbb{Z})$ must satisfy the Steenrod (if $p=2$), or the reduced power (if $p>2$), operation relations of Proposition \ref{prop: obs2}.

\begin{rmk}\label{rmk: Hotchkiss}
In \cite{hotchkiss2022hodgetheorytwistedderived}, Hotchkiss provides a different obstruction from that of de Jong--Perry using $B$-twisted Mukai structures. Let $X$ be a smooth projective complex variety and let $\alpha\in \mathrm{Br}(X)$ be a topologically trivial Brauer class determined by a $B$-field $B\in \mathrm{H}^2(X^{an},\mathbb{Q})$. Then Hotchkiss observes that in order for $\mathrm{ind}(\alpha)$ to divide $d$, there must exist some $v\in K_0^{top}(X^{an})$ and rational Hodge classes $H_i\in \mathrm{H}^{2i}(X^{an},\mathbb{Q})$ for $i=1,...,\dim(X)$ such that \[\mathrm{exp}(B)\mathrm{ch}(v)=d+H_1+H_2+\cdots +H_{\dim(X)} \] where $\mathrm{ch}(v)$ is the Chern character of $v$.

There are two na\"ive obstructions to the existence of such a class $v\in K_0^{top}(X^{an})$. Since the Chern character is an isomorphism rationally, there exists $w\in \mathrm{K}_0^{top}(X^{an})_{\mathbb{Q}}$ such that $\mathrm{ch}(w)=d+H_1+\cdots + H_{\dim(X)}$. If $l\in K_0^{top}(X^{an})_\mathbb{Q}$ is such that $\mathrm{ch}(l)=\mathrm{exp}(-B)$, then $\mathrm{ch}(w\cdot l) = \mathrm{ch}(v)$.

From this one can formally determine that \begin{equation}\label{eq: hotchobs}c_k(v) = \binom{d}{k}(-B)^{k} + \sum_{i=1}^k c_i(w)\binom{d-i}{k-i}(-B)^{k-i}.\end{equation} Using Newton's identities relating power sum polynomials and elementary symmetric polynomials, we find that $c_i(w)=q_i(H_1,...,H_i)$ for some weighted homogeneous polynomial $q_i$ with rational coefficients. Thus, a first obstruction to the existence of $v\in K_0^{top}(X^{an})$ is that there must exist Hodge classes $Q_i=q_i(H_1,...,H_i)$ for all $i=1,...,\dim(X)$ such that $p_k^{-B,d}(Q_1,...,Q_k)$ are integral for all $k=1,...,\dim(X)$.

Granted that one can find rational Hodge classes $Q_i$ for all $i$ as above, a second obstruction is gotten from the fact that if there exists a $v\in K_0^{top}(X^{an})$ with $c_k(v)=p_k^{-B,d}(Q_1,...,Q_k)$, then the classes $\mathrm{ch}_k(v)$, which are formal polynomials in the classes $c_k(v)$, must be in the image of the Chern character map. Note that this is a genuine restriction as, for example, in the case of an abelian variety the Chern character of a class $v\in K_0^{top}(X^{an})$ must be integral.

If both of these obstructions vanish, then there is a $v\in K_0^{top}(X^{an})$ whose Chern classes $c_k(v)$ satisfy (\ref{eq: hotchobs}) for all $k$. In particular, if both of Hotchkiss's obstructions vanish, then so do the obstructions of de Jong--Perry (from Theorem \ref{thm: dpindexbounds}) and so do our refined obstructions.
\end{rmk}

\subsection{Abelian varieties} Our goal now is to provide an explicit example where one can check that the cohomological obstructions described above are stronger than the obstructions of de Jong--Perry, described in Theorem \ref{thm: dpindexbounds}. This is done in Example \ref{ex: obsdjp} below. We point out that any such example (where the de Jong--Perry obstruction vanishes, but where our obstruction does not) provides another counterexample to the integral Hodge conjecture by \cite[Theorem 5.10]{dejong2022periodindexproblemhodgetheory}.

\begin{exmp}\label{ex: 0alg}
Let $X$ be an abelian variety. Then for all $j>0$ and for all $x\in \mathrm{H}^i(X^{an},\mathbb{Z}/2\mathbb{Z})$ we have $\mathrm{Sq}^{j}(x)=0$, regardless of $i>0$. Indeed:
\begin{enumerate}
\item $\mathrm{Sq}^j(x)=0$ if $j>1$ and $x\in \mathrm{H}^1(X^{an},\mathbb{Z}/2\mathbb{Z})$,
\item $\mathrm{Sq}^1(x)=x^2$ if $i=1$, and $x^2=0$ since $\mathrm{H}^*(X^{an},\mathbb{Z})$ is isomorphic to a space of alternating forms, see Section \ref{sec: pre},
\item using linearity and the Cartan formula, one gets $\mathrm{Sq}^j(x)=0$ for all $x\in \mathrm{H}^i(X^{an},\mathbb{Z}/2\mathbb{Z})$ $i>1$ as well.
\end{enumerate}
Similarly, for any odd prime $p$, the reduced power operation $P^j(x)=0$ for all $x\in \mathrm{H}^i(X^{an},\mathbb{Z}/p\mathbb{Z})$ for all $j>0$ and $i>0$.
\end{exmp}

\begin{exmp}\label{ex: obsdjp}
Let $X$ be a very general abelian fourfold. Keep the notation of Section \ref{sec: pre}, so $\mathrm{H}^1(X^{an},\mathbb{Z})$ has a symplectic basis $\{x_i,y_i\}_{i=1}^4$. Let $b\in \mathrm{H}^2(X^{an},\mathbb{Z})$ be the form \[ b=x_1\wedge(y_1+y_3)+x_2\wedge y_2 + x_3\wedge y_1.\] The $B$-field $b/2$ defines a Brauer class $\alpha\in \mathrm{Br}(X)$ of period $2$. We will show that the de Jong--Perry obstructions vanish in degree $4$, so that there is no obstruction to $\mathrm{ind}(\alpha)$ dividing $4$ coming from Theorem \ref{thm: dpindexbounds}. However, the obstructions of Proposition \ref{prop: obs2} are apparent in degree $4$. Since the symbol length $\ell(\alpha)=3$ (in the notation of \cite[Definition 23.1 (2)]{hotchkiss2024periodindexconjectureabelianthreefolds}), this implies $\mathrm{ind}(\alpha)=8$.

The de Jong--Perry obstructions require the existence of rational Hodge classes $c_1,c_2,c_3,c_4$ that produce integral classes: \begin{align*}p_1^{B,4}(c_1)&=4B+c_1\\
p_2^{B,4}(c_1,c_2)&=6B^2+3Bc_1+c_2\\
p_3^{B,4}(c_1,c_2,c_3)&=4B^3+3B^2c_1+2Bc_2+c_3\\
p_4^{B,4}(c_1,c_2,c_3,c_4)&=B^4+B^3c_1+B^2c_2+Bc_3+c_4.\end{align*}
Since $4B$ is integral, the first of these says that $c_1$ is integral, i.e.\ there is an integer $m_1$ such that $c_1=m_1\theta$ for the principal polarization $\theta$. Similarly, $6B^2$ is integral, but $3Bc_1$ is not. One can check that \[b\theta = 2\omega_1\wedge \omega_2 + (\omega_1+\omega_2)\wedge(\omega_3+\omega_4) + (x_1\wedge y_3+x_3\wedge y_1)\wedge (\omega_2+\omega_4)\] where $\omega_i=x_i\wedge y_i$, which is linearly independent from $\theta^2$. So $3m_1/2\in \mathbb{Z}$ and $c_2=m_2(\theta^2/2)$ for an integer $m_2\in \mathbb{Z}$.

In the third condition, both $4B^3$ and $2Bc_2$ are integral. Additionally, since $m_1$ must be even, $3B^2c_1$ is integral. Hence we find $c_3=m_3(\theta^3/6)$ for some integer $m_3\in \mathbb{Z}$. In the fourth condition $B^4=0$ since $b$ is a sum of three symbols, and one can check that $B^2c_2=0$. The remaining summands are: \begin{align*}B^3c_1&=-\frac{3}{4}m_1 (\omega_1\wedge\omega_2\wedge\omega_3\wedge\omega_4)\\ Bc_3 & = m_3(\omega_1\wedge \omega_2\wedge \omega_3\wedge \omega_4).\end{align*} If we write $c_4=m_4(\theta^4/24)$ we find $-(3/4)m_1+m_3+m_4\in \mathbb{Z}$. Since $m_3$ is an integer, and $m_1$ is an even integer, we find $m_4\in (1/2)\mathbb{Z}$. Altogether, the possible solutions are given by the parameter set \begin{equation}\label{eq: soln} m_1\in 2\mathbb{Z},\quad m_2\in \mathbb{Z},\quad m_3\in \mathbb{Z}, m_4\in (1/2)\mathbb{Z}.\end{equation}

Let $c_1,c_2,c_3,c_4$ be any choice of rational Hodge classes as above, satisfying (\ref{eq: soln}). For each of $i=1,2,3,4$ set $C_i\equiv p_i^{B,4} \pmod{2}$ using these classes as input. By Proposition \ref{prop: obs2} and Example \ref{ex: 0alg}, we have \[0\equiv \mathrm{Sq}^2(c_2)=C_1C_2+C_3 \pmod{2}. \]
We now calculate the reduction modulo $2$ for the classes $p_i^{B,4}$ compared to these constraints. Since $m_1\in 2\mathbb{Z}$, we have \[C_1=p_1^{B,4}(c_1)\equiv 2b+c_1 \equiv 0 \pmod{2}.\] We also have \[C_3=p_3^{B,4}(c_1,c_2,c_3)\equiv (b^3/2)+(m_1/2)(b^2\theta/2)+m_2(b\theta^2/2)+m_3(\theta^3/6) \pmod{2}.\] Using the notation above, we can write this as \begin{multline*}C_3\equiv \omega_1\wedge \omega_2\wedge \omega_3 + (m_1/2)(\omega_1\wedge (\omega_3+\omega_2)\wedge \omega_4+(x_1\wedge y_3 + x_3\wedge y_1)\wedge\omega_2\wedge\omega_4)\\ +m_2(\omega_1\wedge\omega_3\wedge\omega_4+\omega_2\wedge\omega_3\wedge\omega_4 + (x_1\wedge y_3+x_3\wedge y_1)\wedge \omega_2\wedge\omega_4) \\+m_3(\omega_1\wedge \omega_2\wedge \omega_3 + \omega_1\wedge \omega_2\wedge \omega_4 + \omega_1\wedge \omega_3\wedge \omega_4 + \omega_2\wedge \omega_3\wedge \omega_4) \pmod{2}.\end{multline*} In order to cancel the leading $\omega_1\wedge\omega_2 \wedge \omega_3$ term in $C_3$, we must have $m_3\equiv 1 \pmod{2}$. Similarly, looking at the terms \[\omega_1\wedge \omega_2\wedge \omega_4,\quad \omega_1\wedge \omega_3\wedge \omega_4,\quad \omega_2\wedge \omega_3\wedge \omega_4\] we find congruences \begin{align*}(m_1/2)+m_3 &\equiv 0 \pmod{2}\\ (m_1/2)+m_2+m_3 &\equiv 0 \pmod{2}\\ m_2+m_3&\equiv 0 \pmod{2}.\end{align*} The last of these imply $m_2\equiv 1 \pmod{2}$, and then the middle and first imply that $m_1/2$ is both odd and even.

Hence for all choices of $m_1,m_2,m_3$, we have $C_3 \not\equiv 0 \pmod{2}$, and there do not exist rational Hodge classes $c_1,c_2,c_3,c_4$ making the classes $p_1^{B,4}, p_2^{B,4}, p_3^{B,4}, p_4^{B,4}$ both integral and, simultaneously, subject to the Steenrod constraints of Proposition \ref{prop: obs2}.
\end{exmp}

\begin{rmk}
For an abelian variety, the Chern character induces an isomorphism between $K_0^{top}(X^{an})$ and $\mathrm{H}^{2*}(X^{an},\mathbb{Z})$. Thus, in order for the two obstructions of Remark \ref{rmk: Hotchkiss} to vanish, one needs:
\begin{enumerate}
\item to find rational Hodge classes $Q_i$ for all $i=1,...,\dim(X)$ such that $p_k^{-B,d}(Q_1,...,Q_k)$ is integral for all $k=1,...,\dim(X)$;
\item to check that when one uses Newton's recursive formulas for writing the Chern classes $c_k(v)=p_k^{-B,d}(Q_1,...,Q_k)$ as Chern characters $\mathrm{ch}_k(v)$, the resulting classes are integral. 
\end{enumerate}
We note for more general varieties, however, determining the image of the Chern character map may be a difficult task itself. In such cases, one can still use our refined obstructions without this information.
\end{rmk}

\section{Additional applications to Azumaya algebras}\label{sec: apps}
In this section we highlight some applications to the area of Azumaya algebras that are directly accessible through work of Hotchkiss \cite{hotchkiss2022hodgetheorytwistedderived}, the obstructions of de Jong--Perry \cite{dejong2022periodindexproblemhodgetheory}, and the results of this paper. We focus on the interpretation of all of these methods as an algorithm for producing lower bounds on the index for a given Brauer class on an abelian variety, and how this can be applied to produce new examples.

\subsection{Algorithmic index bounds}
Let $X$ be a smooth and projective complex variety. Let $\alpha\in \mathrm{Br}(X)$ be a topologically trivial Brauer class on $X$. Then Theorem \ref{thm: dpindexbounds} can be interpreted as an algorithm for producing lower bounds on the index of $\alpha$:
\begin{enumerate}
\item one iterates over multiples of the period $\mathrm{per}(\alpha)$,
\item one checks whether such rational Hodge classes exist, e.g.\ using
Remark \ref{rmk: classsolver},
\item a lower bound on $\mathrm{ind}(\alpha)$ is obtained by the largest multiple where the previous step fails.
\end{enumerate}

Further, when one produces a parameterization of the integral classes $p_i^{B,d}$, e.g.\ as in Remark \ref{rmk: classsolver}, one gets a parameterization for all mod $p$ inputs, for a prime $p$, to the obstruction of Proposition \ref{prop: obs2}. In these cases, this new obstruction is therefore also algorithmically checkable, since this space will always be finite. 

It's interesting to know when these algorithms fail. For an abelian variety, these algorithms will always vanish in degree $2\mathrm{per}(\alpha)^{\ell(\alpha)}$ where $\ell(\alpha)$ is the symbol length of $\alpha$, cf.\ \cite[Definition 23.1 (2)]{hotchkiss2024periodindexconjectureabelianthreefolds}, since the index $\mathrm{ind}(\alpha)$ must divide $\mathrm{per}(\alpha)^{\ell(\alpha)}$. A different bound is given by de Jong and Perry in \cite[Remark 5.14]{dejong2022periodindexproblemhodgetheory}. There the authors show that their obstructions vanish for any smooth and projective complex variety $X$, and for any topologically trivial Brauer class $\alpha\in \mathrm{Br}(X)$, in degree $2\mathrm{per}(\alpha)^{\dim(X)-1}$ if every prime divisor of the period $\mathrm{per}(\alpha)$ is coprime to $(\dim(X)-1)!$. 

For an abelian variety, however, we can show that these algorithms will fail much more often:

\begin{prop}\label{prop: bounds}
Let $X$ be a complex abelian variety. Let $\alpha\in \mathrm{Br}(X)$ be a topologically trivial Brauer class on $X$. Assume that $\mathrm{per}(\alpha)=p^r$ for a prime $p$ and some $r>0$. 

Then the algorithm of de Jong and Perry, for producing a lower bound on the index of $\alpha$, will fail in degree $2p^{rs}$ where \[rs = r\dim(X) - \left\lfloor\frac{\dim(X)-1}{p-1}\right\rfloor+\lfloor \log_p(\dim(X)-1)\rfloor + 1.\]
Note that when $p^r=2$, this is simply $rs=\lfloor\log_p(\dim(X)-1)\rfloor+2$.
\end{prop}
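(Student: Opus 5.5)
The plan is to show directly that the de Jong--Perry obstruction of Theorem \ref{thm: dpindexbounds} vanishes in degree $d=p^{N}$ with $N=rs$. To do this I will exhibit explicit rational Hodge classes $c_1,\dots,c_m$ for which every $p_i^{B,d}(1,c_1,\dots,c_i)$ is integral. Write $g=\dim(X)$ and $B=b/p^r$ with $b\in \mathrm{H}^2(X^{an},\mathbb{Z})$. I will take $c_j=0$ for $1\le j\le g-1$, so that $p_i^{B,d}=\binom{d}{i}B^i$ for $i<g$. In top degree I will take $c_g=-\binom{d}{g}B^g$. This is legitimate because $\mathrm{H}^{2g}(X^{an},\mathbb{Q})$ is one-dimensional and consists entirely of Hodge classes. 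With this choice $p_g^{B,d}=0$, so the condition in degree $g$ is automatic. This is why the formula involves $g-1$ rather than $g$.

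The integrality input comes from the description $\mathrm{H}^*(X^{an},\mathbb{Z})\cong\bigwedge^*\mathrm{H}^1(X^{an},\mathbb{Z})$ of Section \ref{sec: pre}. For any integral $2$-form $b$, the divided power $b^i/i!$ is again integral: writing $b=\sum_{k<l} a_{kl}e_k\wedge e_l$, the class $b^i/i!$ is a sum over sets of $i$ disjoint pairs with integer coefficients. Hence
\[\binom{d}{i}B^i=\frac{\binom{d}{i}\, i!}{p^{ri}}\cdot\frac{b^i}{i!},\]
and it suffices to have
\[v_p\Big(\tbinom{p^N}{i}\Big)+v_p(i!)\ \ge\ ri\qquad\text{for all }1\le i\le g-1.\]
By Kummer's theorem $v_p\binom{p^N}{i}=N-v_p(i)$ (valid for $1\le i\le p^N$), so the condition becomes $N\ge \max_{1\le i\le g-1}\big(ri+v_p(i)-v_p(i!)\big)$.

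The remaining, purely arithmetic step is to bound $f(i)=ri+v_p(i)-v_p(i!)$ for $1\le i\le g-1$ by
\[rg-\Big\lfloor\tfrac{g-1}{p-1}\Big\rfloor+\lfloor\log_p(g-1)\rfloor+1.\]
My approach is as follows.
\begin{enumerate}
\item Use $v_p(i)\le\lfloor\log_p(g-1)\rfloor$ for $i\le g-1$.
\item Compare $ri-v_p(i!)$ with its value at $g-1$. For $i<j$ one has $v_p(j!)-v_p(i!)=\sum_{i<t\le j}v_p(t)$, so I need $r(g-1-i)\ge v_p((g-1)!)-v_p(i!)$ up to a controlled error that the slack absorbs.
\item Bound $v_p((g-1)!)$ using Legendre's formula $v_p(n!)=(n-s_p(n))/(p-1)$.
\end{enumerate}
Item 2 is the main obstacle, since a single $v_p(t)$ can exceed $r$. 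I would first try to handle it by grouping the terms $t$ into blocks of length $p$, each contributing on average at most $1/(p-1)\le r$. I would then absorb the boundary error into the extra $+1$ and the $\lfloor\log_p(g-1)\rfloor$ slack, and compare $v_p(i!)$ against $\lfloor i/(p-1)\rfloor$ with the same correction.

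For the special case $p^r=2$, the formula simplifies to $g-(g-1)+\lfloor\log_2(g-1)\rfloor+1=\lfloor\log_p(g-1)\rfloor+2$, which matches the stated simplification and serves as a consistency check. One could instead use nonzero $c_1$ (integral multiples of $\theta$) to improve the bound, but that is not needed here.
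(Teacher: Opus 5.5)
Your reduction is correct, and slightly sharper than the paper's. The paper takes all $c_j=0$, i.e.\ $\gamma=(h+\pi^*B)^{p^{rs}}$, and checks integrality of $\binom{p^{rs}}{i}B^i$ for $1\le i\le g$. Your choice $c_g=-\binom{d}{g}B^g$ legitimately removes the top-degree condition, so you only need $N\ge f(i):=ri+v_p(i)-v_p(i!)$ for $1\le i\le g-1$. (This trick does not explain the $g-1$ in the formula, though. In the paper that comes from $v_p(i!)-v_p(i)=v_p((i-1)!)$ at $i=g$.)

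The gap is the remaining arithmetic. It is the whole content of the proposition, you leave it as a plan, and the plan as structured cannot reach the stated bound. Step~1 spends the $\lfloor\log_p(g-1)\rfloor$ slack on $v_p(i)$ uniformly. Steps~2--3 would then have to show $ri-v_p(i!)\le rg-\lfloor\frac{g-1}{p-1}\rfloor+1$ for all $i\le g-1$, and that is false. For $p=2$, $r=1$, Legendre gives $ri-v_2(i!)=s_2(i)$, so the requirement becomes $s_2(i)\le 2$, which fails at $g=8$, $i=7$. Numerically, for $p=2$, $r=1$, $g=8$ the target is $rs=4$, but your two uniform bounds give $\max_i v_2(i)+\max_i\,(i-v_2(i!))=2+3=5$. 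The two maxima occur at different $i$: powers of $p$ for $v_p(i)$, and integers with many digits $p-1$ for the digit-sum term. Bounding them separately double-counts the logarithmic slack, and no block-grouping in Step~2 can repair this.

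The missing idea is not to split at all. Since $v_p(i!)-v_p(i)=v_p((i-1)!)$, Legendre gives
\[f(i)=ri-\frac{i-1-s_p(i-1)}{p-1}.\]
Because $r\ge 1\ge\frac{1}{p-1}$, the part $ri-\frac{i-1}{p-1}$ is nondecreasing in $i$. Also $s_p(i-1)\le (p-1)(\lfloor\log_p(g-1)\rfloor+1)$, since $i-1\le g-1$ has at most $\lfloor\log_p(g-1)\rfloor+1$ base-$p$ digits. Hence, for every $1\le i\le g$,
\[f(i)\le rg-\frac{g-1}{p-1}+\lfloor\log_p(g-1)\rfloor+1\le rs.\]
This is the paper's argument, and it covers your range $i\le g-1$ a fortiori. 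As a check, in the case $p=2$, $r=1$, $g=8$ the true maximum is $\max_{i\le 7}f(i)=1+\max_{i\le 7}s_2(i-1)=3\le 4$.
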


\begin{proof}
Let $b\in \mathrm{H}^2(X^{an},\mathbb{Z})$ be a $2$-form whose Brauer class is $\alpha$, and let $B=b/p^r$ be the associated rational $B$-field. We will show that the class \[\gamma = (h+\pi^*B)^{p^{rs}}\in \mathrm{H}^{2p^{rs}}(\mathbf{SB}(\mathcal{A})^{an}, \mathbb{Q}),\] in the notation of Theorem \ref{thm: hodge}, is an integral Hodge class, for $rs$ above. Thus, one can take $c_j=0$ for all $j>0$ in Theorem \ref{thm: dpindexbounds} in this degree.

Now we want to show that for each $i\geq 1$, the term $\binom{p^{rs}}{i}B^i$ is integral. By properties of the exterior algebra, we can write $b^i=i!a$ for an integral class $a$. The $p$-valuation for the coefficient of the class $a$ in this term is then \[v_p\left(\binom{p^{rs}}{i}i!\frac{1}{p^{ri}}\right)=rs-v_p(i)+v_p(i!)-ri=r(s-i)+v_p((i-1)!),\] using \cite[Lemma 3.5]{MR1615533} for the binomial coefficient. 

We want to show that, for $rs$ as in the statement, the valuation above is nonnegative for all $0\leq i\leq \dim(X)$. We recall from \cite[II Lemma 5.6]{MR1697859} that \[v_p((i-1)!)=\frac{i-1-s_p(i-1)}{p-1}\] where $s_p(i-1)$ is the sum of the base-$p$ digits of $i-1$. Isolating for $rs$ and substituting this formula, we are tasked with showing \[rs \geq ri - \frac{i-1-s_p(i-1)}{(p-1)}.\]

Note that for $i$ in the range $0\leq i \leq \dim(X)$ we have \[s_p(i-1)\leq (p-1)\lfloor \log_p(\dim(X)-1)+1\rfloor.\] Replacing this in the above, it suffices to take $rs$ such that \[
rs \geq r\dim(X) - \frac{\dim(X)-1}{(p-1)} + \lfloor\log_p(\dim(X)-1)+1\rfloor,\] which gives the claim.
\end{proof}

\begin{table}[htpb]
\centering
\renewcommand{\arraystretch}{1.2}
\begin{tabular}{|c||c|c|c|c|c|c|c|c|c|c|}
\hline
$\dim(X)$ & $2$ & $3$ & $2^2$ & $5$ & $7$ & $2^3$ & $3^2$ & $11$ & $13$ & $2^4$ \\
\hline
3 & - & - & -     & - & -  & - & - & - & -           & - \\
4 & 3 & - & $7/2$ & - & -  & $11/3$ & - & - & -      & $15/4$ \\
5 & 4 & - & $9/2$ & - & -  & $14/3$ & - & - & -      & $19/4$ \\
6 & 4 & - & 5      & - & - & $16/3$ & -  & - & -     & $11/2$ \\
7 & 4 & 6 & $11/2$ & - & -  & 6     & $13/2$ & - & - & $25/4$ \\
8 & 4 & 7 & 6      & - & -  & $20/3$ & $15/2$ & - & - & 7 \\
9 & 5 & 7 & 7       & - & - & $23/3$ & 8 & - & -      & 8 \\
10 & 5 & 9 & $15/2$ & - & - & $25/3$ & $19/2$ & - & - & $35/4$ \\
11 & 5 & 9 & 8      & - & - & 9     & 10 & - & -      & $19/2$ \\
12 & 5 & 10 & $17/2$ & - & - & $29/3$ & $11$ & - & -  & $41/4$ \\
\hline
\end{tabular}\vspace{0.4 cm}
\caption{Values of the rational bound $s$ from Proposition \ref{prop: bounds} for an abelian variety $X$ of dimension $\dim(X)$, listed in rows, and for a prime power $p^r$, listed in columns. Cells where $s\geq\dim(X)$ are denoted by a hyphen.}
\label{tab:s_values_rational}
\end{table}

\begin{rmk}
We note that one can obtain bounds where the de Jong--Perry obstruction fails, but for a Brauer class on an abelian variety of period $n$ with multiple prime factors, using both Proposition \ref{prop: bounds} above and the primary decomposition of $n$.
\end{rmk}

\begin{rmk}
For any $d>0$, and for any prime $p$ coprime to $(d-1)!$, there exists a Brauer class $\alpha\in \mathrm{Br}(X)$ on a product of $d$ pairwise non-isogenous elliptic curves $X$ with index $\mathrm{ind}(\alpha)=p^t$ where $1\leq t \leq d-1$, and $\mathrm{per}(\alpha)=p$, cf.\ \cite[Proposition 5.19]{dejong2022periodindexproblemhodgetheory}. 

For such Brauer classes $\alpha$, we must have that $p\geq d$ since $p\nmid(d-1)!$. It follows that the integral Hodge classes constructed in the proof of Proposition \ref{prop: bounds} provide no new examples of failure of the integral Hodge conjecture for these classes since our bound simplifies to \[s=d-\lfloor(d-1)/(p-1)\rfloor +\lfloor \log_p(d-1)\rfloor + 1 = d+1\] in this case.

The authors also construct classes $\alpha\in \mathrm{Br}(X)$ in \cite[Proposition 5.19]{dejong2022periodindexproblemhodgetheory} of a prime power period $\mathrm{per}(\alpha)=p^r$, for any $r>1$, and of index $\mathrm{ind}(\alpha)=p^{m}$, for any $1\leq t \leq d-1$, for some $m$ in $r(t-1)<m\leq rt$. Note that, despite the claim of the Proposition, the exact value of $m$ is undetermined in the proof of \cite[Proposition 5.19]{dejong2022periodindexproblemhodgetheory}. In these cases, new counterexamples to the integral Hodge conjecture would appear if $m>rs$ for the bound $rs$ of Proposition \ref{prop: bounds}. However, since $p\geq d$, this again fails in these cases since $rs\geq rd$ trivially.
\end{rmk}

Proposition \ref{prop: bounds} is particularly interesting in the case that $\mathrm{per}(\alpha)=2$. We tested the index obstructions of de Jong--Perry against the refined Chern index obstructions coming from the relations of Proposition \ref{prop: obs2} for Brauer classes $\alpha\in \mathrm{Br}(X)$ of period $\mathrm{per}(\alpha)\leq 5$ on very general abelian varieties $X$ with $\dim(X)\in [4,8]$. The results are displayed in Figures \ref{fig: bfield_data_4_4}--\ref{fig: bfield_data_8_2_r} below. We note that, from this data alone, it appears that the upper bounds from Proposition \ref{prop: bounds}, for the degrees where the de Jong--Perry obstructions fail, are not sharp; it also appears that the Chern obstructions from Proposition \ref{prop: obs2} vanish in these same degrees, although it isn't clear to the author why this should be true.

Regarding data collection, we used the algorithm of Remark \ref{rmk: classsolver}, and the outline at the beginning of this subsection, to systematically test the index of Brauer classes $\alpha\in \mathrm{Br}(X)$ for an abelian variety $X$ in each case. In each case we aimed to test around $100$ such classes $\alpha$, and in dimension $8$ we tested only $56$ classes. For $\alpha$ selection, we selected an equal number of $\alpha$ classes determined uniformly at random from equivalence classes of sets of $2$-forms $\{b,b+k\theta\}_k\in\mathrm{H}^2(X^{an},\mathbb{Z})$ where $1\leq k< \mathrm{per}(\alpha)$ and where $\theta$ was the principal polarization, among all forms $b$ with Hamming weight bounded by $\dim(X)+2$ (except in the case of $\mathrm{per}(\alpha)=2$ and $\mathrm{dim}(X)=6$ where we allowed Hamming weight up to $12$). Note that the machine running these computations (having access to 64G of RAM and 50G of swap) eventually ran out of memory on forms of higher Hamming weight.

\subsection{Indecomposable Azumaya algebras}
Algorithms that allow one to determine lower bounds on the index of a Brauer class $\alpha\in \mathrm{Br}(X)$ can also be used to analyze other interesting algebraic properties of the Brauer class $\alpha$, e.g.\ decomposability of $\alpha$. In this subsection, we show how we can leverage these algorithms of de Jong--Perry and the previous subsection to obstruct decomposability of a Brauer class.

\begin{defn}
Let $X$ be a smooth, projective variety. Let $\alpha\in \mathrm{Br}(X)$ be a Brauer class on $X$. We say that $\alpha$ is \textit{decomposable} if there exists a Brauer class $\beta\in \mathrm{Br}(X)$ such that both \[\mathrm{ind}(\beta)<\mathrm{ind}(\alpha),\quad \mathrm{ind}(\alpha-\beta)<\mathrm{ind}(\alpha).\] If no such $\beta$ exists, then we say that $\alpha$ is \textit{indecomposable}.
\end{defn}

\begin{rmk}
Let $X$ be a smooth projective variety. Then $\alpha\in \mathrm{Br}(X)$ is decomposable if and only if there exists an Azumaya algebra $\mathcal{A}$ in the class $\alpha$ and Azumaya algebras $\mathcal{B},\mathcal{C}$ on $X$ satisfying: \begin{enumerate}
\item$\mathrm{ind}(\mathcal{B})<\mathrm{ind}(\mathcal{A})$ and $\mathrm{ind}(\mathcal{C})<\mathrm{ind}(\mathcal{A})$,
\item and $\mathcal{A}\cong \mathcal{B}\otimes \mathcal{C}$.
\end{enumerate}
An Azumaya algebra $\mathcal{A}$ that admits $\mathcal{B},\mathcal{C}$ satisfying (1) and (2) above is said to be decomposable. Analogously, an Azumaya algebra that is not decomposable is said to be indecomposable. 

It follows that if $\alpha\in \mathrm{Br}(X)$ is indecomposable, then every Azumaya algebra $\mathcal{A}$ in the class of $\alpha$ is indecomposable.
\end{rmk}

For a long time, determining whether there existed indecomposable central simple algebras of prime period $p>2$ and index $p^2$ was an open problem. This problem was eventually resolved by Karpenko \cite{MR1348794}, who produced obstructions to decomposability coming from degrees of certain naturally defined algebraic cycles.

In the case of algebras of period $2$ and index $4$, a classical theorem of Albert \cite[XI \S6]{MR595} shows that all such algebras are products of two quaternion algebras (thus motivating the question for higher primes $p$). This theorem does not generalize to an a arbitrary Azumaya algebra over a more general scheme, however, as can be deduced from a theorem of Antieau-Auel-First \cite[Theorem A]{MR3908769} on the existence of such algebras lacking an involution of the first kind.

We give an example, in the same vein as \cite[Theorem A]{MR3908769}, of an indecomposable Brauer class $\alpha\in \mathrm{Br}(X)$ of period $2$ and index $4$ on an abelian threefold below (Example \ref{ex: indec}). It follows that no Azumaya algebra of degree $4$ representing this class admits such an involution, although we do not know if $\alpha$ is even represented by such an algebra.

Now let $X$ be a complex abelian variety. Then for any integral $2$-form $b\in \mathrm{H}^2(X^{an},\mathbb{Z})$ representing a topologically trivial class $\alpha\in \mathrm{Br}(X)$ of period $\mathrm{per}(\alpha)=n$ and index $\mathrm{ind}(\alpha)=m>1$, one gets an algorithm for testing if $\alpha$ is indecomposable as follows:
\begin{enumerate}
\item one iterates over elements $c$ of the finite group $\mathrm{H}^2(X^{an},\mathbb{Z}/n\mathbb{Z})$, with associated Brauer class $\gamma\in \mathrm{Br}(X)$, giving a decomposition $\bar{b}=c+(\bar{b}-c)$ where $\bar{b}$ is $b \pmod{n}$;
\item one checks that either the de Jong--Perry index obstructions or the Chern obstructions force either of the equalities $\mathrm{ind}(\gamma)=\mathrm{ind}(\alpha)$ or $\mathrm{ind}(\alpha-\gamma)=\mathrm{ind}(\alpha)$;
\item if the latter holds for all classes $c\in \mathrm{H}^2(X^{an},\mathbb{Z}/n\mathbb{Z})$, then it follows that $\alpha$ is indecomposable.
\end{enumerate}

\begin{exmp}\label{ex: indec}
Let $X$ be a very general abelian threefold. Keeping the notation of Section \ref{sec: pre}, let \[b=x_1\wedge(y_1+y_2)+x_2\wedge(y_1+y_3)+x_3\wedge(y_1+y_2+y_3)\] and consider the topologically trivial Brauer class $\alpha\in \mathrm{Br}(X)$ of period $2$ determined from the rational $B$-field $B=b/2$. Then the index of $\alpha$ is $\mathrm{ind}(\alpha)=4$ (one can bound the index from below by $4$ using index obstructions, and then it must be $4$ by \cite[Theorem 1.3]{hotchkiss2024periodindexconjectureabelianthreefolds}). 

We used the above procedure, iterating over all $32,768$ elements of $\mathrm{H}^2(X^{an},\mathbb{Z}/2\mathbb{Z})$, to find $\alpha$ is indecomposable. This form was also used as an example in \cite[Example 23.4]{hotchkiss2024periodindexconjectureabelianthreefolds} and it has symbol length $\ell(\alpha)=3$.
\end{exmp}

\bibliographystyle{amsalpha}
\bibliography{bib}

\begin{figure}[h]
    \begin{adjustbox}{center}
    \includegraphics[width=1.6\textwidth]{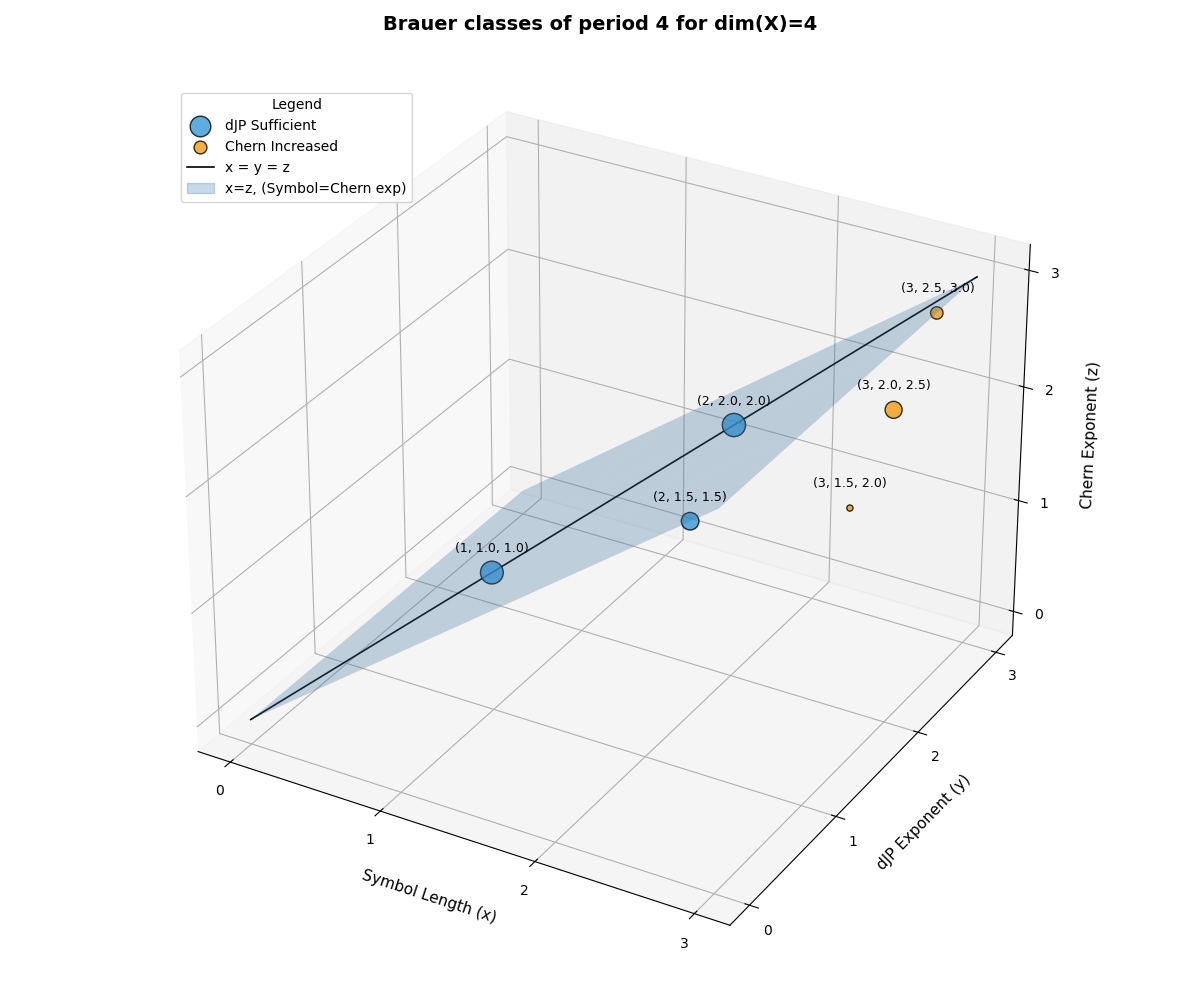}
    \end{adjustbox}
    \caption{A collection of 96 Brauer classes of period 4 on a very general abelian 4-fold sampled uniformly at random from 2-forms of Hamming weight at most 6. Blue bubbles indicate that either the maximum possible index (determined from symbol length) was obtained by the de Jong--Perry obstructions, or the Chern obstructions provided no new bounds. Yellow bubbles indicate algebras where Chern obstructions provided higher index bounds over the de Jong--Perry obstructions.}
    \label{fig: bfield_data_4_4}
\end{figure}

\begin{figure}[h]
    \begin{adjustbox}{center}
    \includegraphics[width=1.75\textwidth]{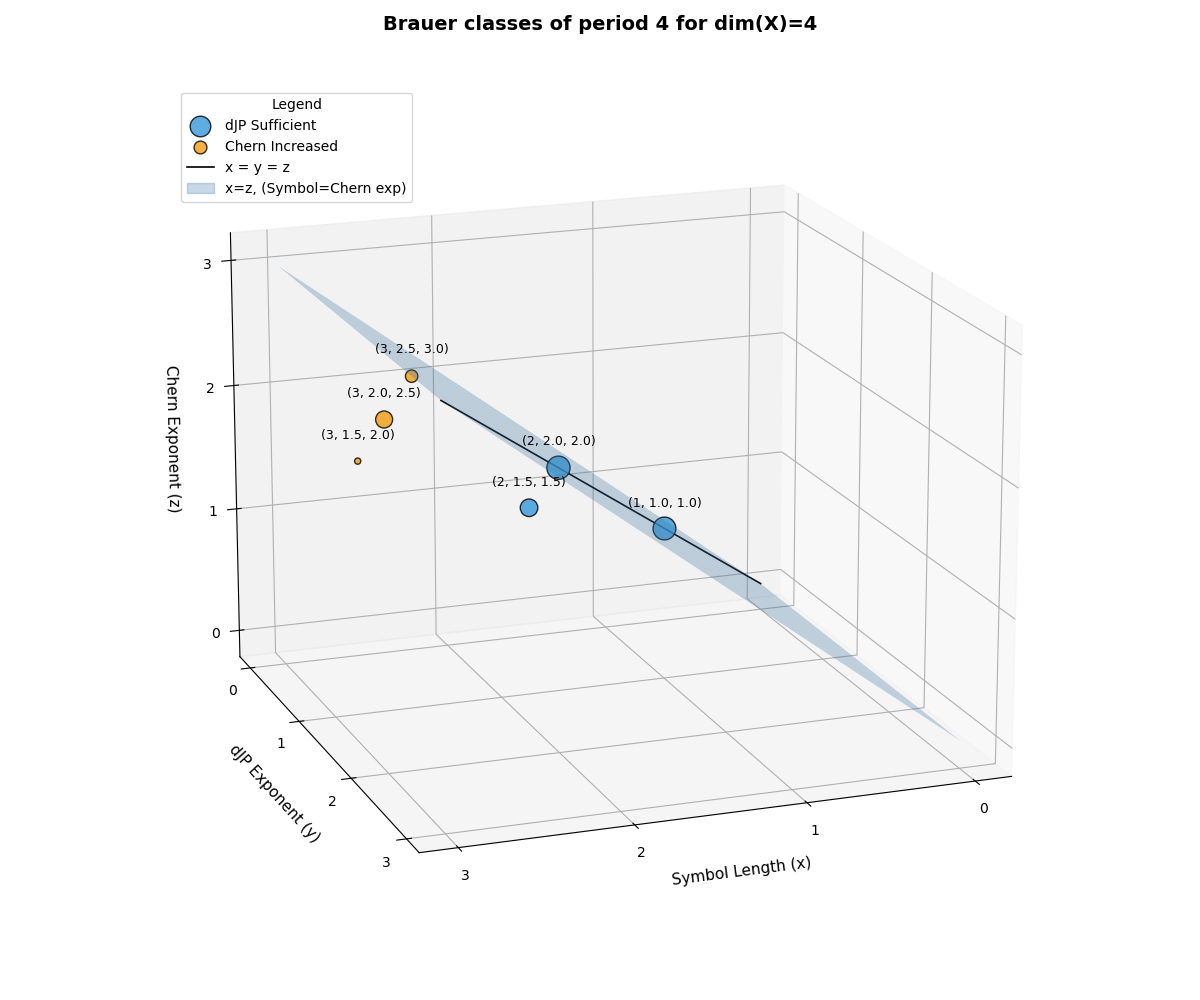}
    \end{adjustbox}
    \caption{A collection of 96 Brauer classes of period 4 on a very general abelian 4-fold sampled from 2-forms of Hamming weight at most 6, rotated.}
    \label{fig: bfield_data_4_4_r}
\end{figure}

\begin{figure}[h]
    \begin{adjustbox}{center}
    \includegraphics[width=1.6\textwidth]{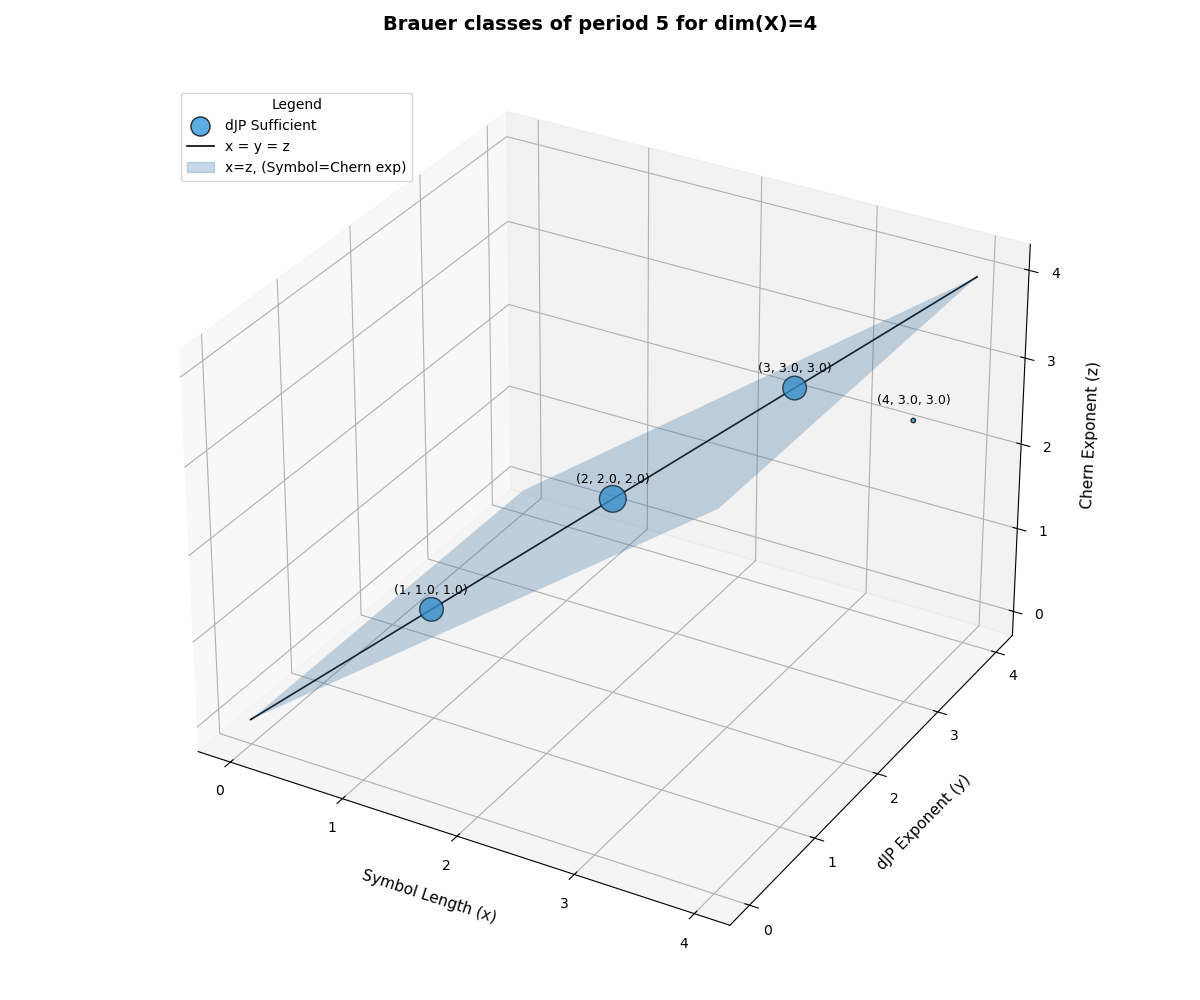}
    \end{adjustbox}
    \caption{A collection of 96 Brauer classes of period 5 on a very general abelian 4-fold sampled uniformly at random from 2-forms of Hamming weight at most 6. Blue bubbles indicate that either the maximum possible index (determined from symbol length) was obtained by the de Jong--Perry obstructions, or the Chern obstructions provided no new bounds. Yellow bubbles indicate algebras where Chern obstructions provided higher index bounds over the de Jong--Perry obstructions.}
    \label{fig: bfield_data_4_5}
\end{figure}

\begin{figure}[h]
    \begin{adjustbox}{center}
    \includegraphics[width=1.75\textwidth]{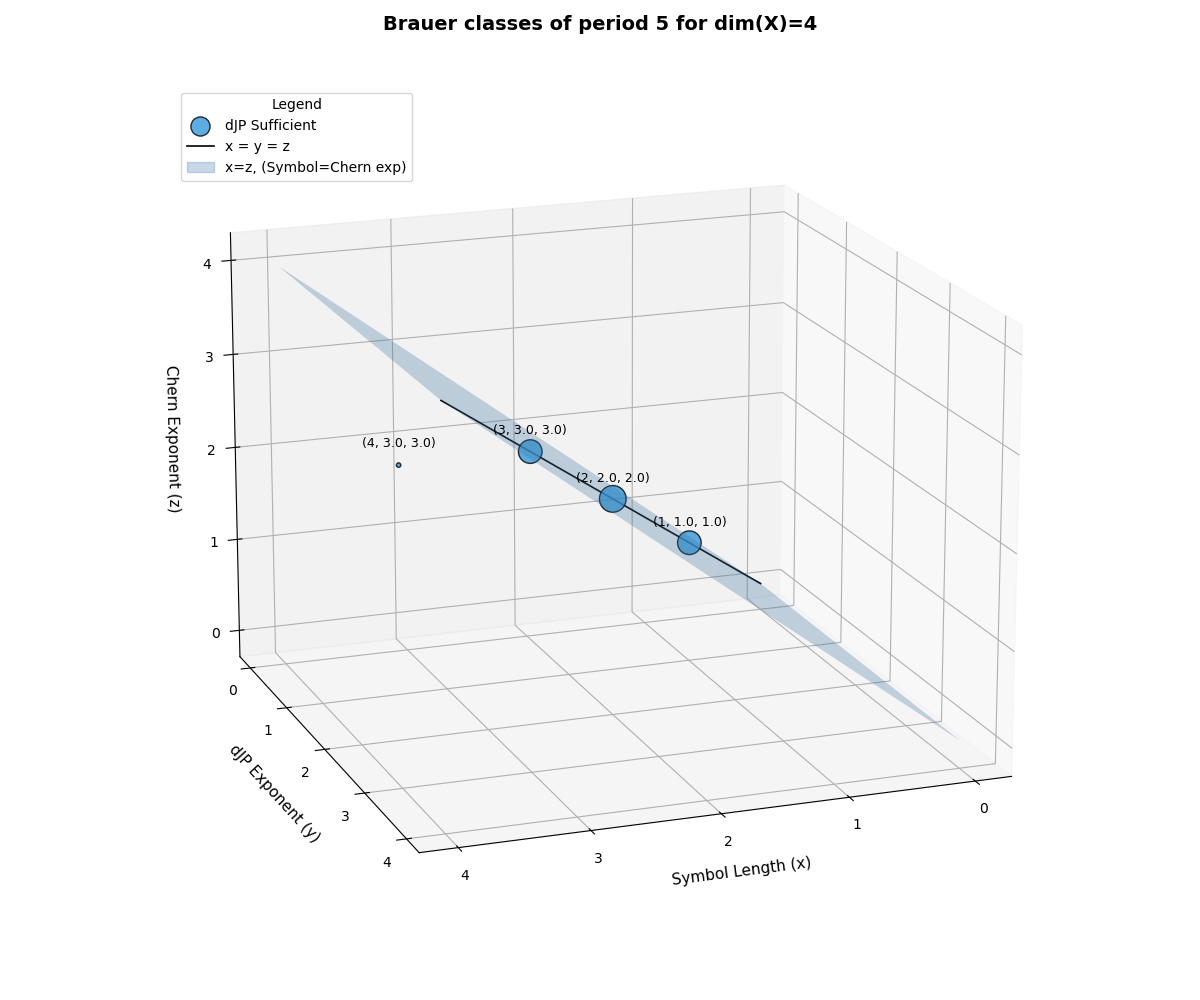}
    \end{adjustbox}
    \caption{A collection of 96 Brauer classes of period 5 on a very general abelian 4-fold sampled from 2-forms of Hamming weight at most 6, rotated.}
    \label{fig: bfield_data_4_5_r}
\end{figure}

\begin{figure}[h]
    \begin{adjustbox}{center}
    \includegraphics[width=1.6\textwidth]{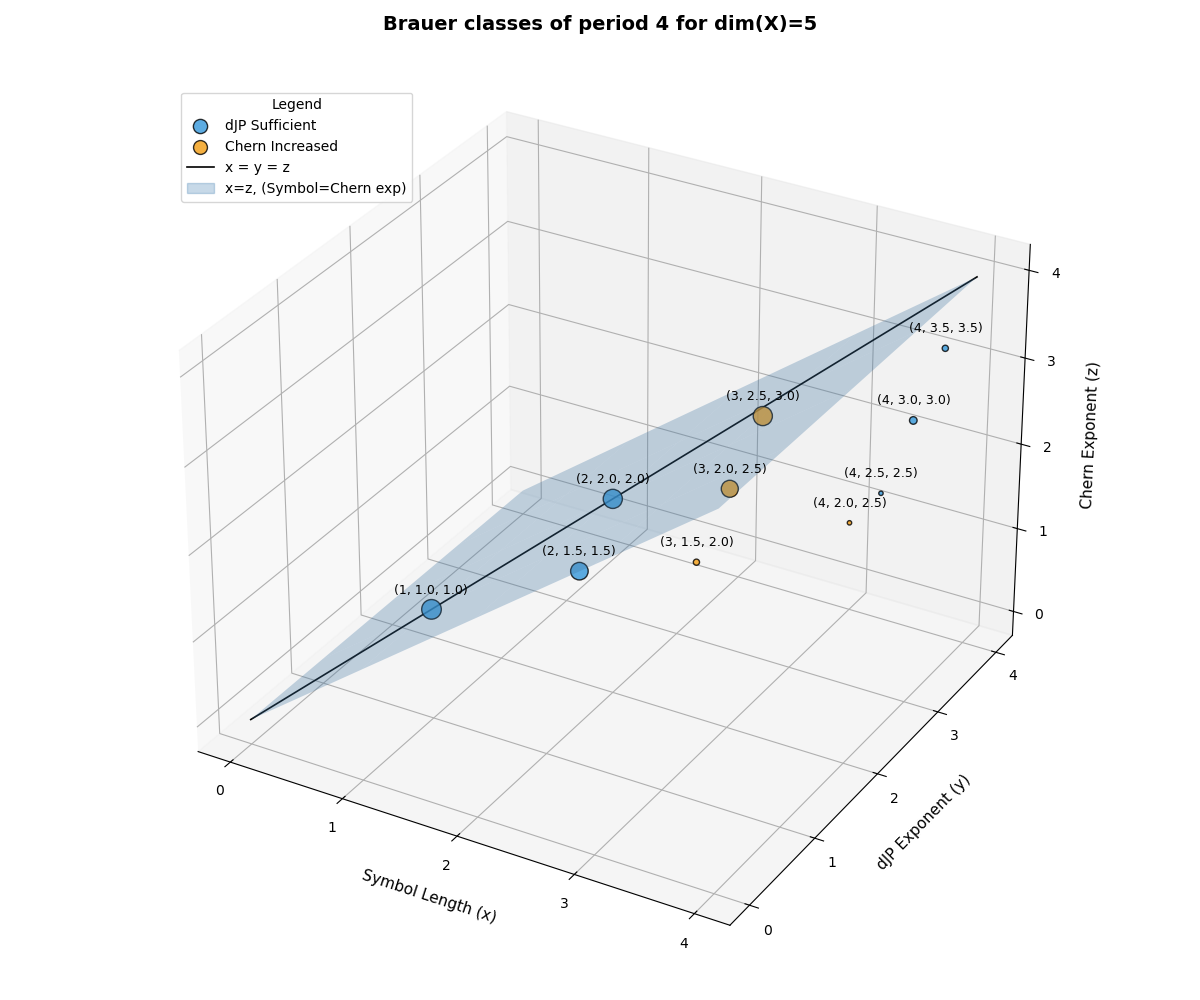}
    \end{adjustbox}
    \caption{A collection of 98 Brauer classes of period 4 on a very general abelian 5-fold sampled uniformly at random from 2-forms of Hamming weight at most 7. Blue bubbles indicate that either the maximum possible index (determined from symbol length) was obtained by the de Jong--Perry obstructions, or the Chern obstructions provided no new bounds. Yellow bubbles indicate algebras where Chern obstructions provided higher index bounds over the de Jong--Perry obstructions.}
    \label{fig: bfield_data_5_4}
\end{figure}

\begin{figure}[h]
    \begin{adjustbox}{center}
    \includegraphics[width=1.75\textwidth]{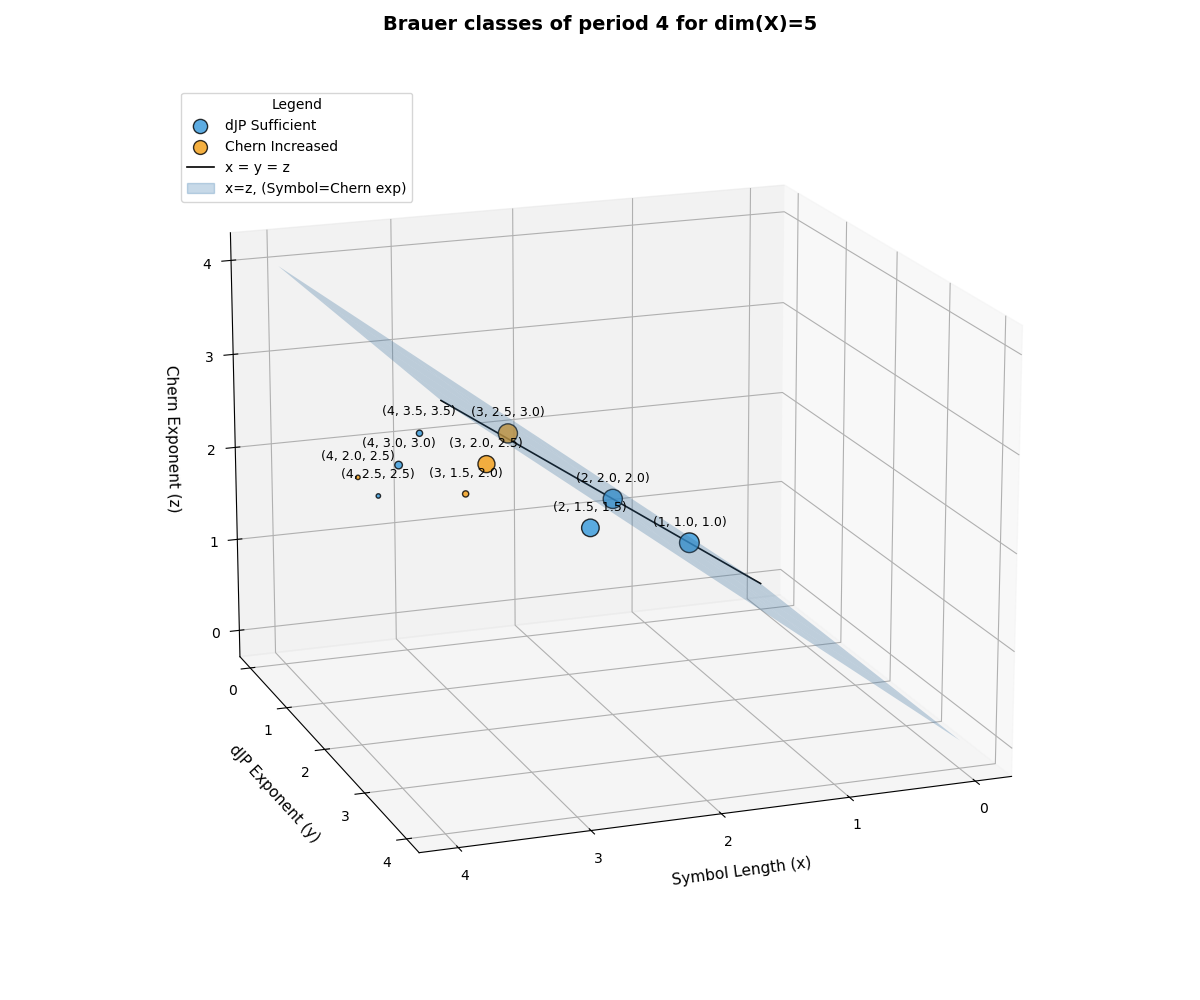}
    \end{adjustbox}
    \caption{A collection of 98 Brauer classes of period 4 on a very general abelian 5-fold sampled from 2-forms of Hamming weight at most 7, rotated.}
    \label{fig: bfield_data_5_4_r}
\end{figure}

\begin{figure}[h]
    \begin{adjustbox}{center}
    \includegraphics[width=1.6\textwidth]{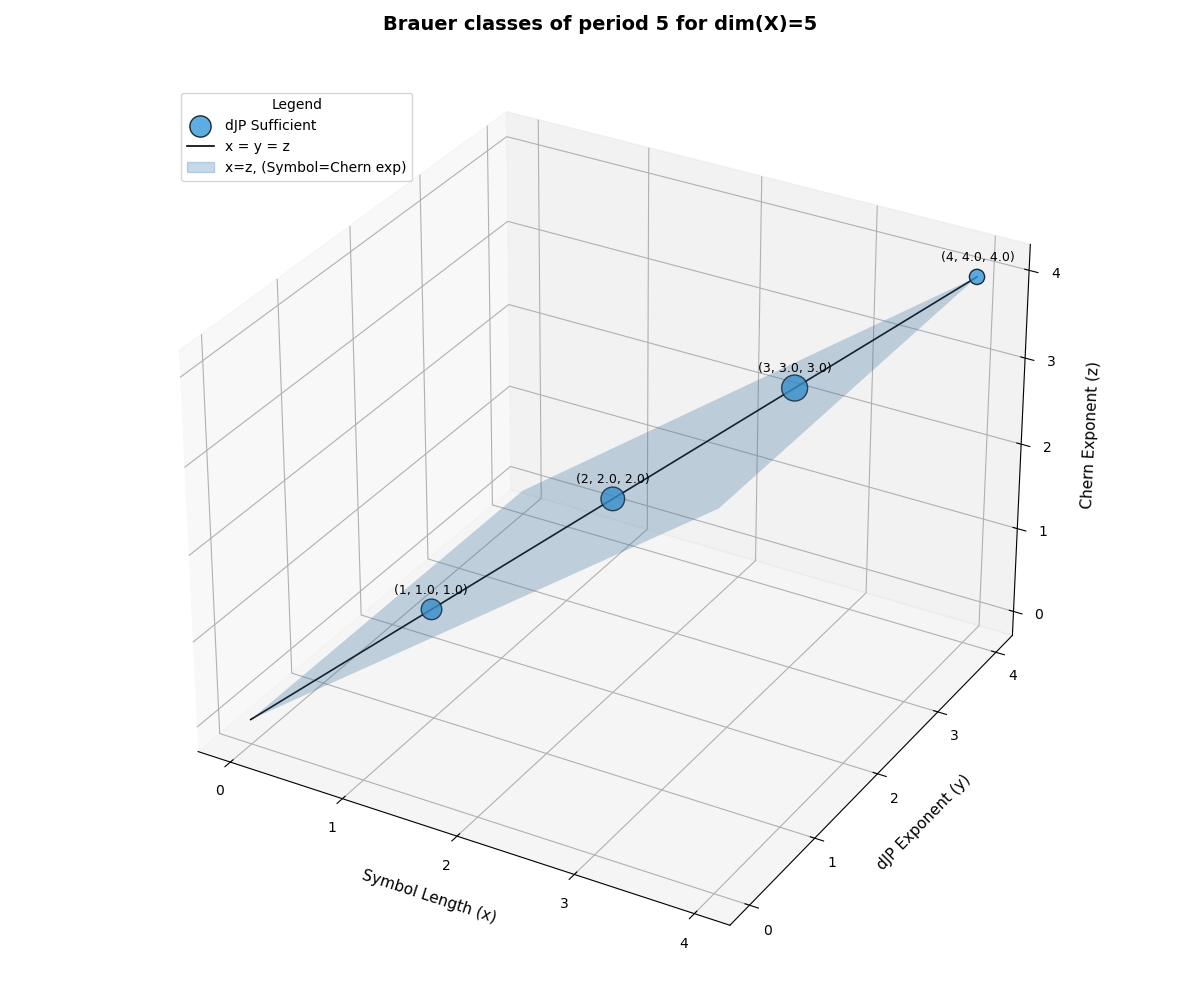}
    \end{adjustbox}
    \caption{A collection of 98 Brauer classes of period 5 on a very general abelian 5-fold sampled uniformly at random from 2-forms of Hamming weight at most 7. Blue bubbles indicate that either the maximum possible index (determined from symbol length) was obtained by the de Jong--Perry obstructions, or the Chern obstructions provided no new bounds. Yellow bubbles indicate algebras where Chern obstructions provided higher index bounds over the de Jong--Perry obstructions.}
    \label{fig: bfield_data_5_5}
\end{figure}

\begin{figure}[h]
    \begin{adjustbox}{center}
    \includegraphics[width=1.75\textwidth]{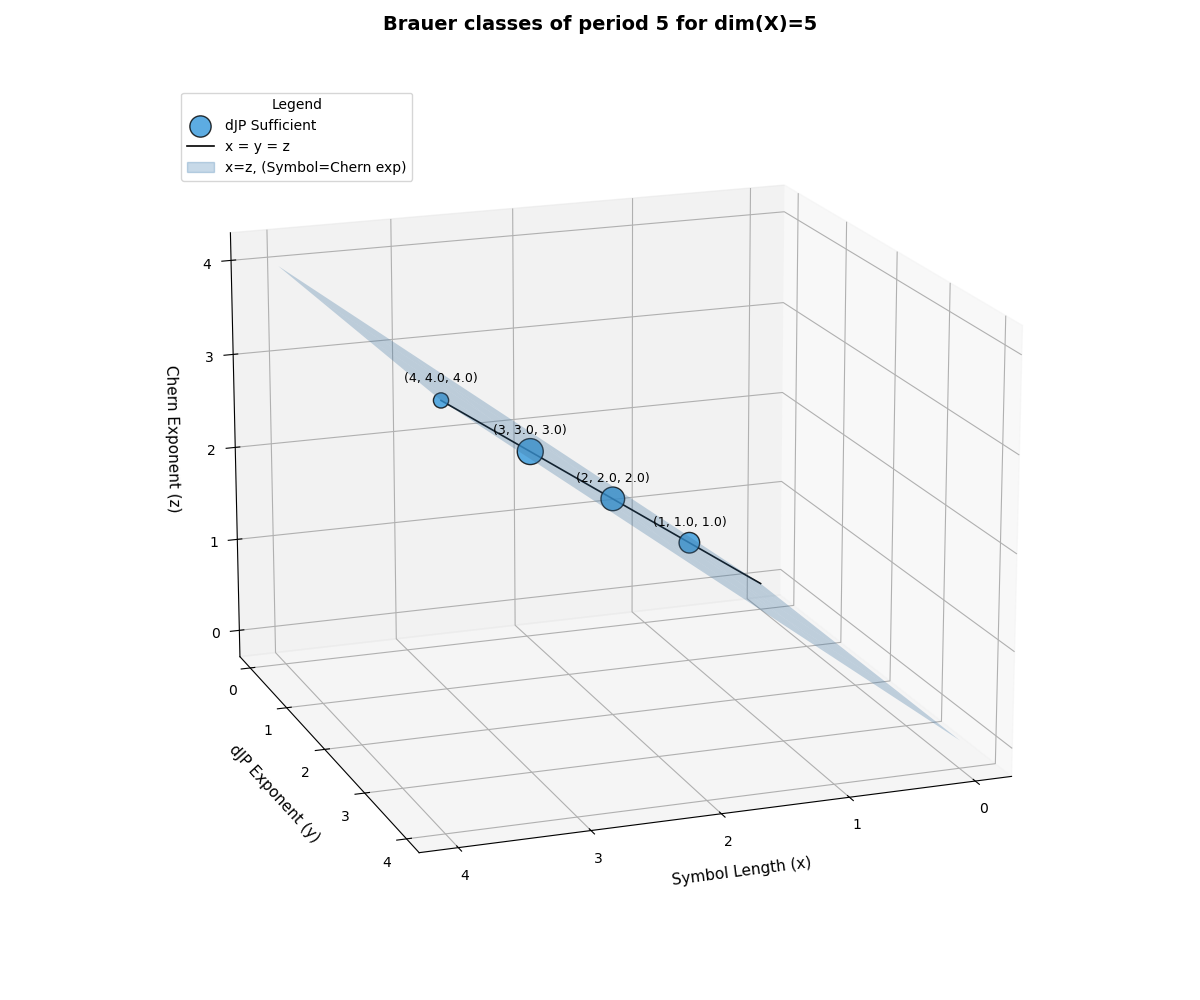}
    \end{adjustbox}
    \caption{A collection of 98 Brauer classes of period 5 on a very general abelian 5-fold sampled from 2-forms of Hamming weight at most 7, rotated.}
    \label{fig: bfield_data_5_5_r}
\end{figure}

\begin{figure}[h]
    \begin{adjustbox}{center}
    \includegraphics[width=1.6\textwidth]{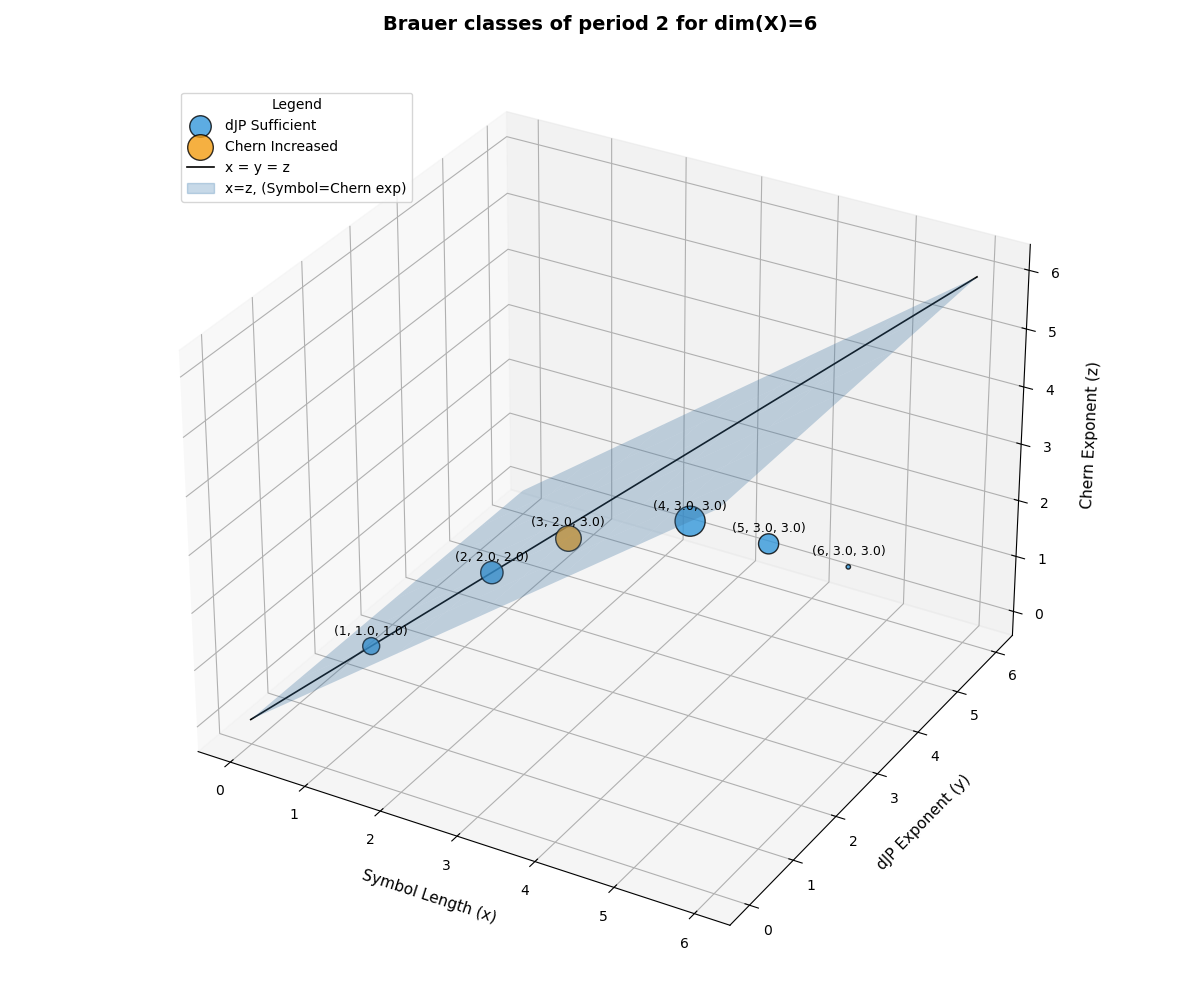}
    \end{adjustbox}
    \caption{A collection of 144 Brauer classes of period 2 on a very general abelian 6-fold sampled uniformly at random from 2-forms of Hamming weight at most 12. Blue bubbles indicate that either the maximum possible index (determined from symbol length) was obtained by the de Jong--Perry obstructions, or the Chern obstructions provided no new bounds. Yellow bubbles indicate algebras where Chern obstructions provided higher index bounds over the de Jong--Perry obstructions.}
    \label{fig: bfield_data_6_2_combined}
\end{figure}

\begin{figure}[h]
    \begin{adjustbox}{center}
    \includegraphics[width=1.75\textwidth]{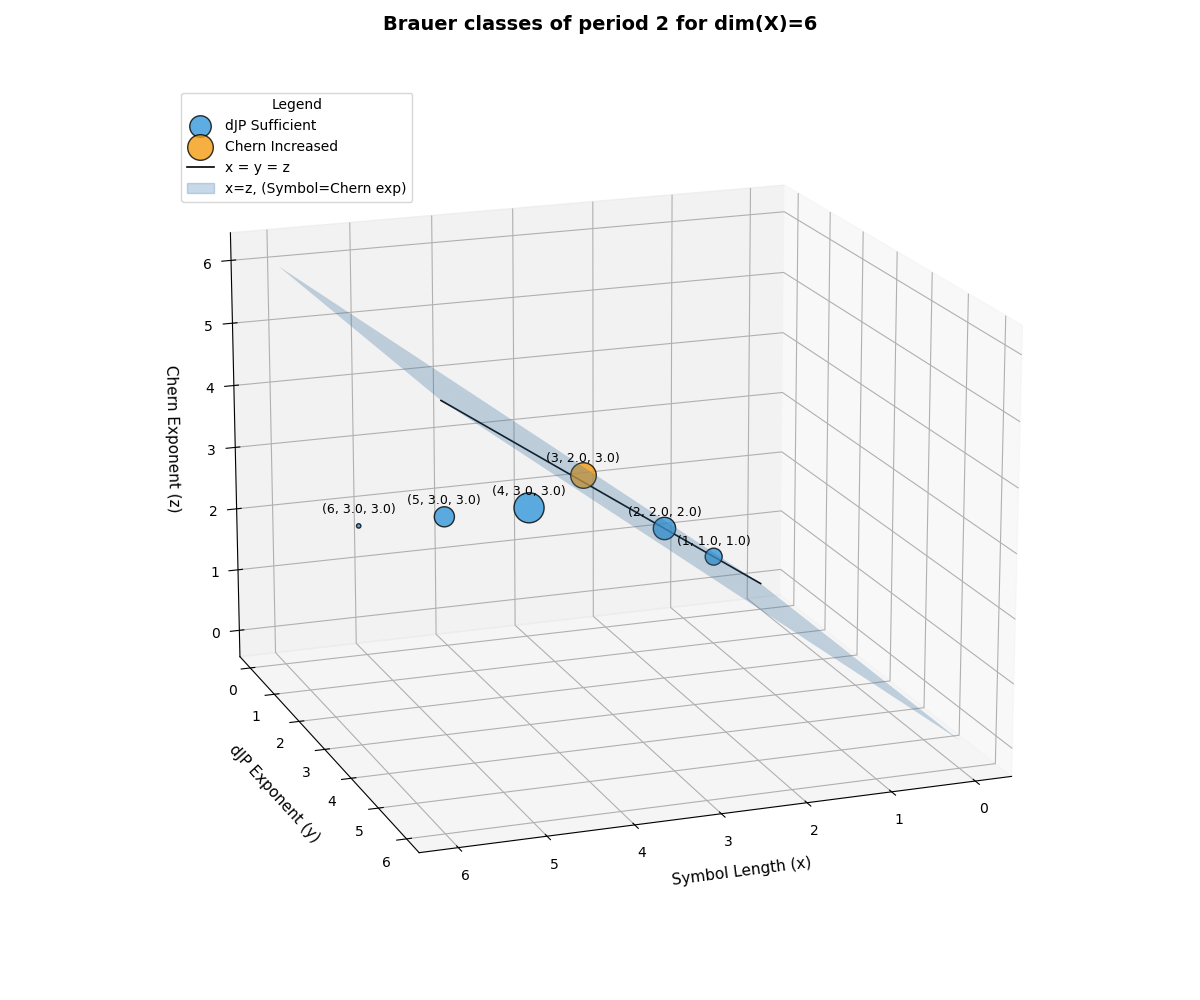}
    \end{adjustbox}
    \caption{A collection of 144 Brauer classes of period 2 on a very general abelian 6-fold sampled from 2-forms of Hamming weight at most 12, rotated.}
    \label{fig: bfield_data_6_2_combined_r}
\end{figure}

\begin{figure}[h]
    \begin{adjustbox}{center}
    \includegraphics[width=1.6\textwidth]{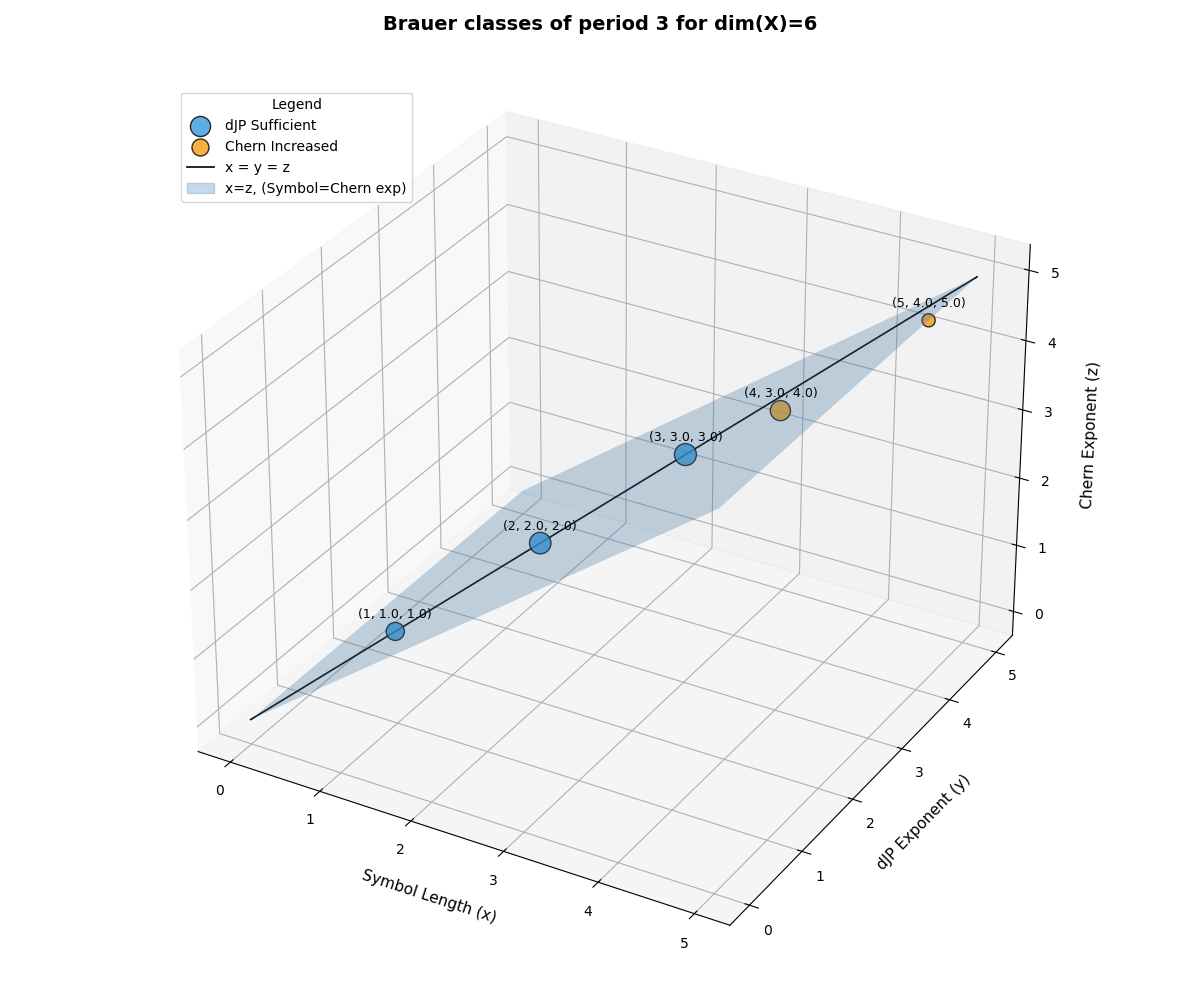}
    \end{adjustbox}
    \caption{A collection of 96 Brauer classes of period 3 on a very general abelian 6-fold sampled uniformly at random from 2-forms of Hamming weight at most 8. Blue bubbles indicate that either the maximum possible index (determined from symbol length) was obtained by the de Jong--Perry obstructions, or the Chern obstructions provided no new bounds. Yellow bubbles indicate algebras where Chern obstructions provided higher index bounds over the de Jong--Perry obstructions.}
    \label{fig: bfield_data_6_3}
\end{figure}

\begin{figure}[h]
    \begin{adjustbox}{center}
    \includegraphics[width=1.75\textwidth]{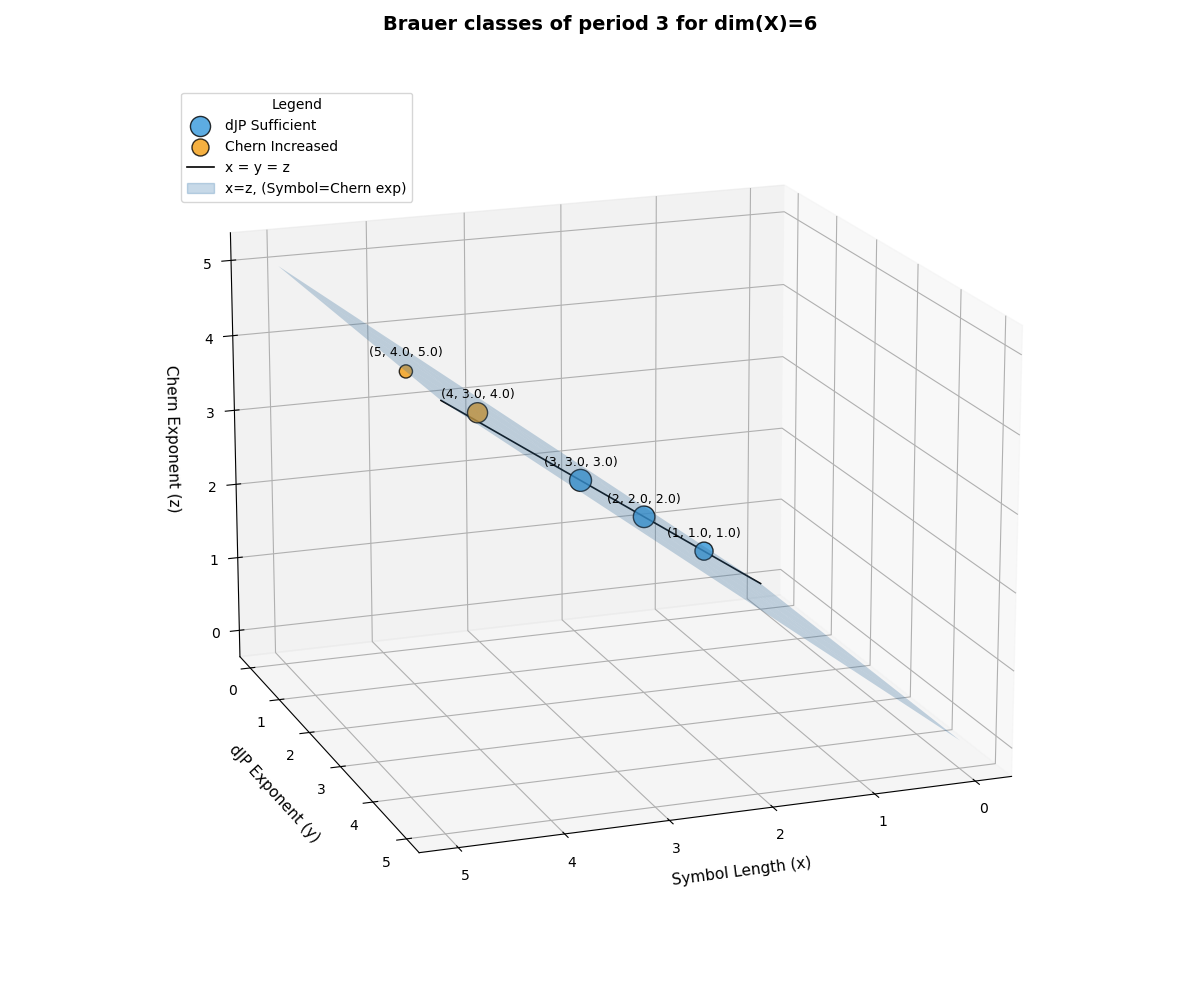}
    \end{adjustbox}
    \caption{A collection of 96 Brauer classes of period 3 on a very general abelian 6-fold sampled from 2-forms of Hamming weight at most 8, rotated.}
    \label{fig: bfield_data_6_3_r}
\end{figure}

\begin{figure}[h]
    \begin{adjustbox}{center}
    \includegraphics[width=1.6\textwidth]{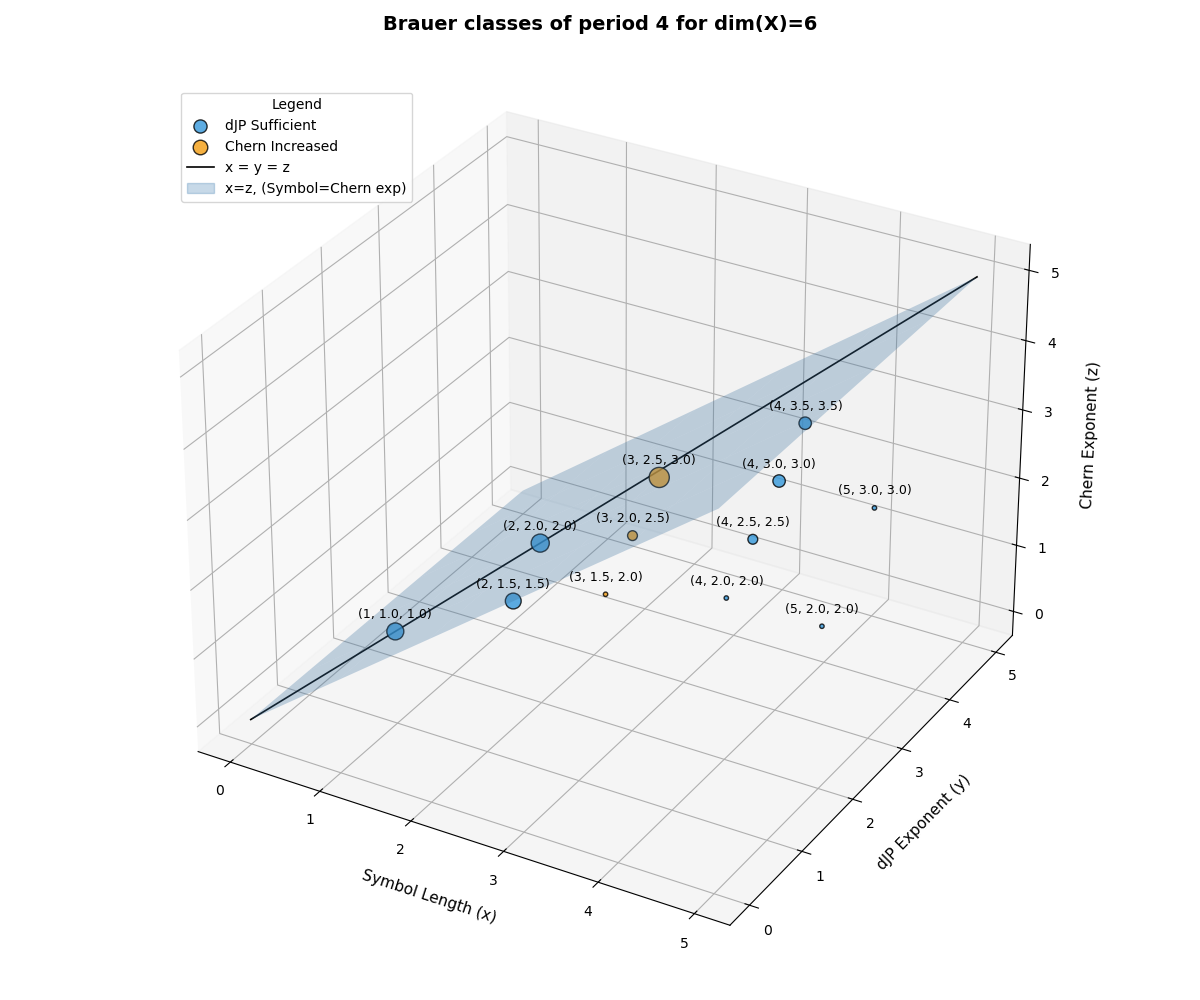}
    \end{adjustbox}
    \caption{A collection of 96 Brauer classes of period 4 on a very general abelian 6-fold sampled uniformly at random from 2-forms of Hamming weight at most 8. Blue bubbles indicate that either the maximum possible index (determined from symbol length) was obtained by the de Jong--Perry obstructions, or the Chern obstructions provided no new bounds. Yellow bubbles indicate algebras where Chern obstructions provided higher index bounds over the de Jong--Perry obstructions.}
    \label{fig: bfield_data_6_4}
\end{figure}

\begin{figure}[h]
    \begin{adjustbox}{center}
    \includegraphics[width=1.75\textwidth]{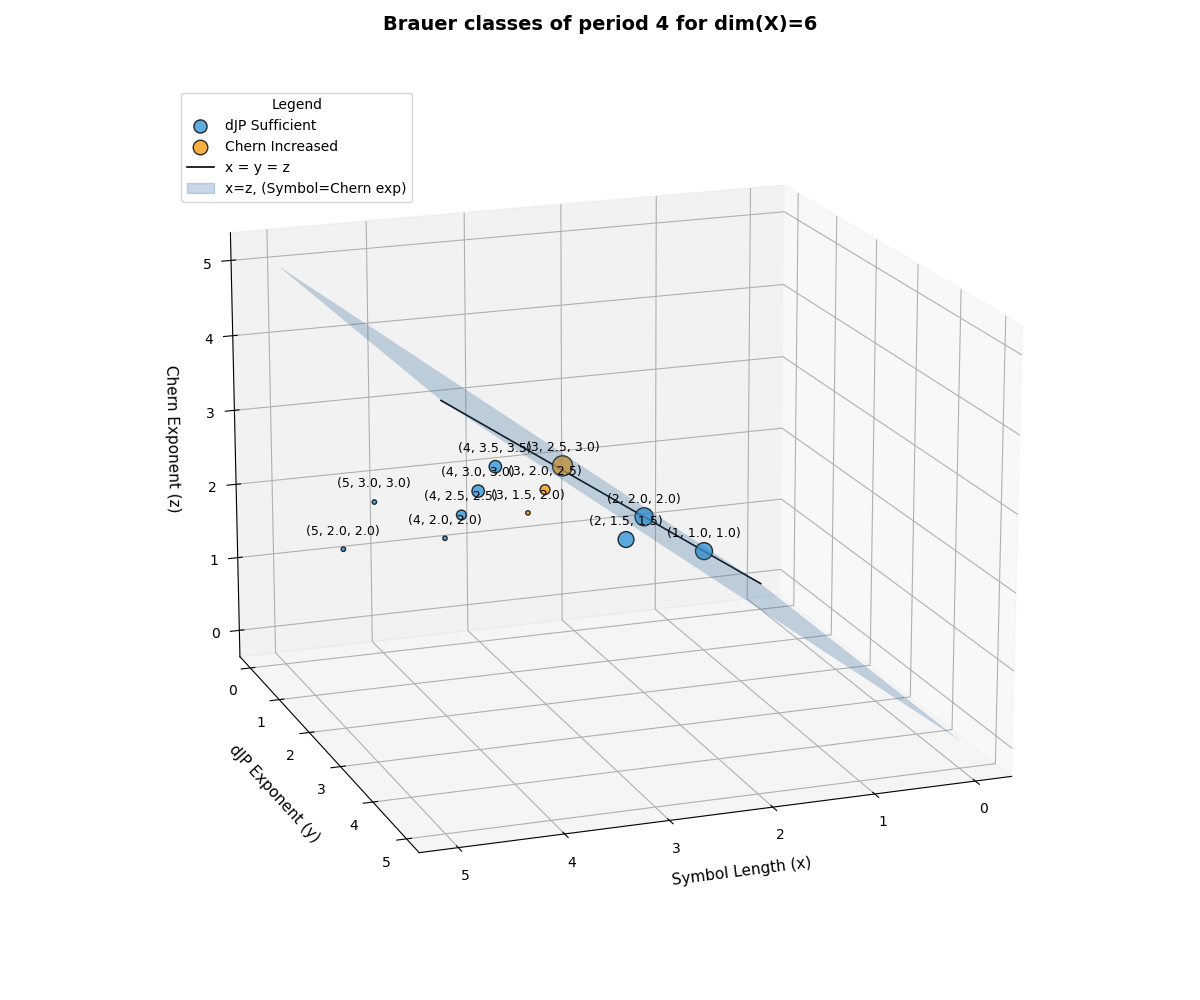}
    \end{adjustbox}
    \caption{A collection of 96 Brauer classes of period 4 on a very general abelian 6-fold sampled from 2-forms of Hamming weight at most 8, rotated.}
    \label{fig: bfield_data_6_4_r}
\end{figure}

\begin{figure}[h]
    \begin{adjustbox}{center}
    \includegraphics[width=1.6\textwidth]{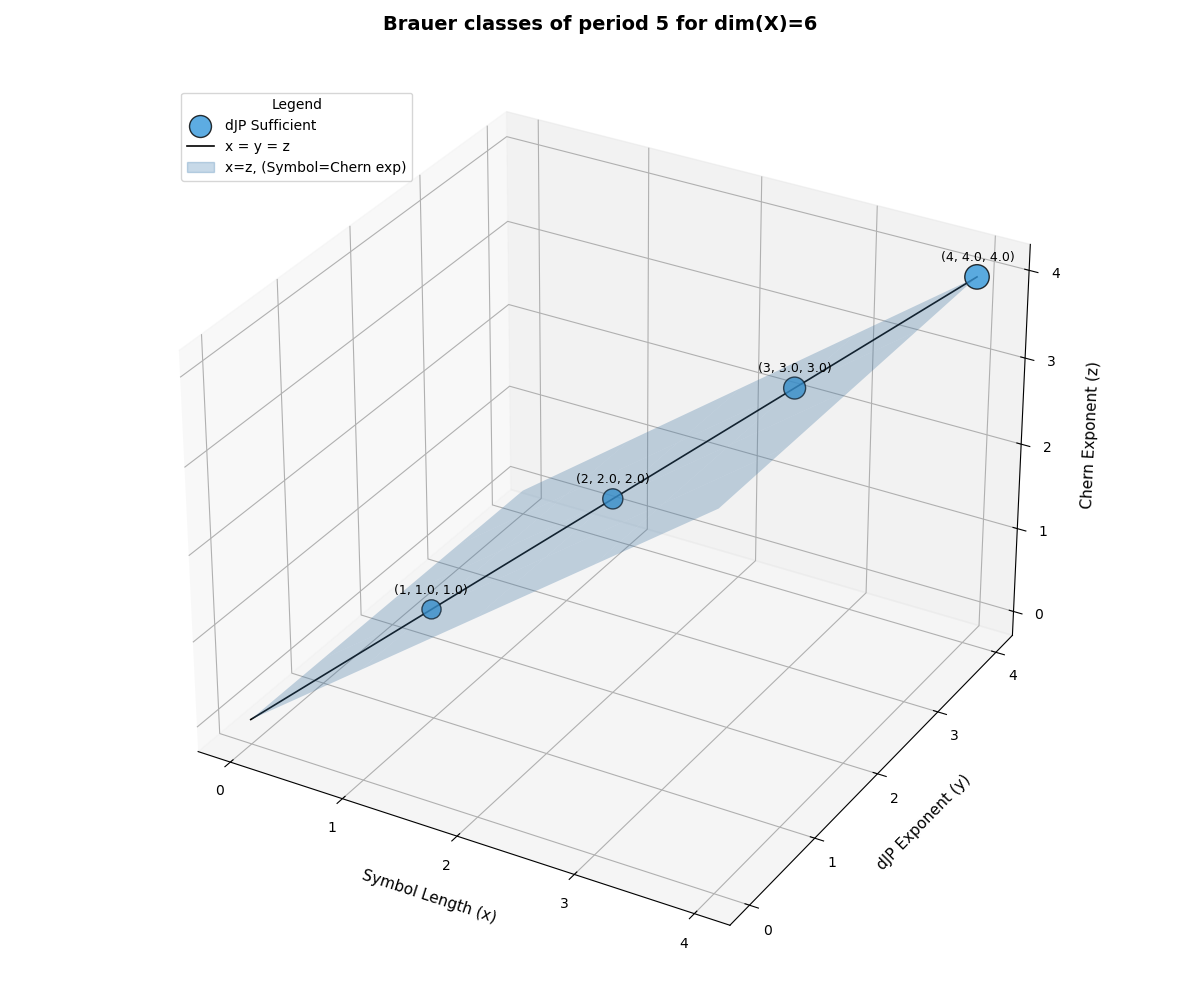}
    \end{adjustbox}
    \caption{A collection of 96 Brauer classes of period 5 on a very general abelian 6-fold sampled uniformly at random from 2-forms of Hamming weight at most 8. Blue bubbles indicate that either the maximum possible index (determined from symbol length) was obtained by the de Jong--Perry obstructions, or the Chern obstructions provided no new bounds. Yellow bubbles indicate algebras where Chern obstructions provided higher index bounds over the de Jong--Perry obstructions.}
    \label{fig: bfield_data_6_5}
\end{figure}

\begin{figure}[h]
    \begin{adjustbox}{center}
    \includegraphics[width=1.75\textwidth]{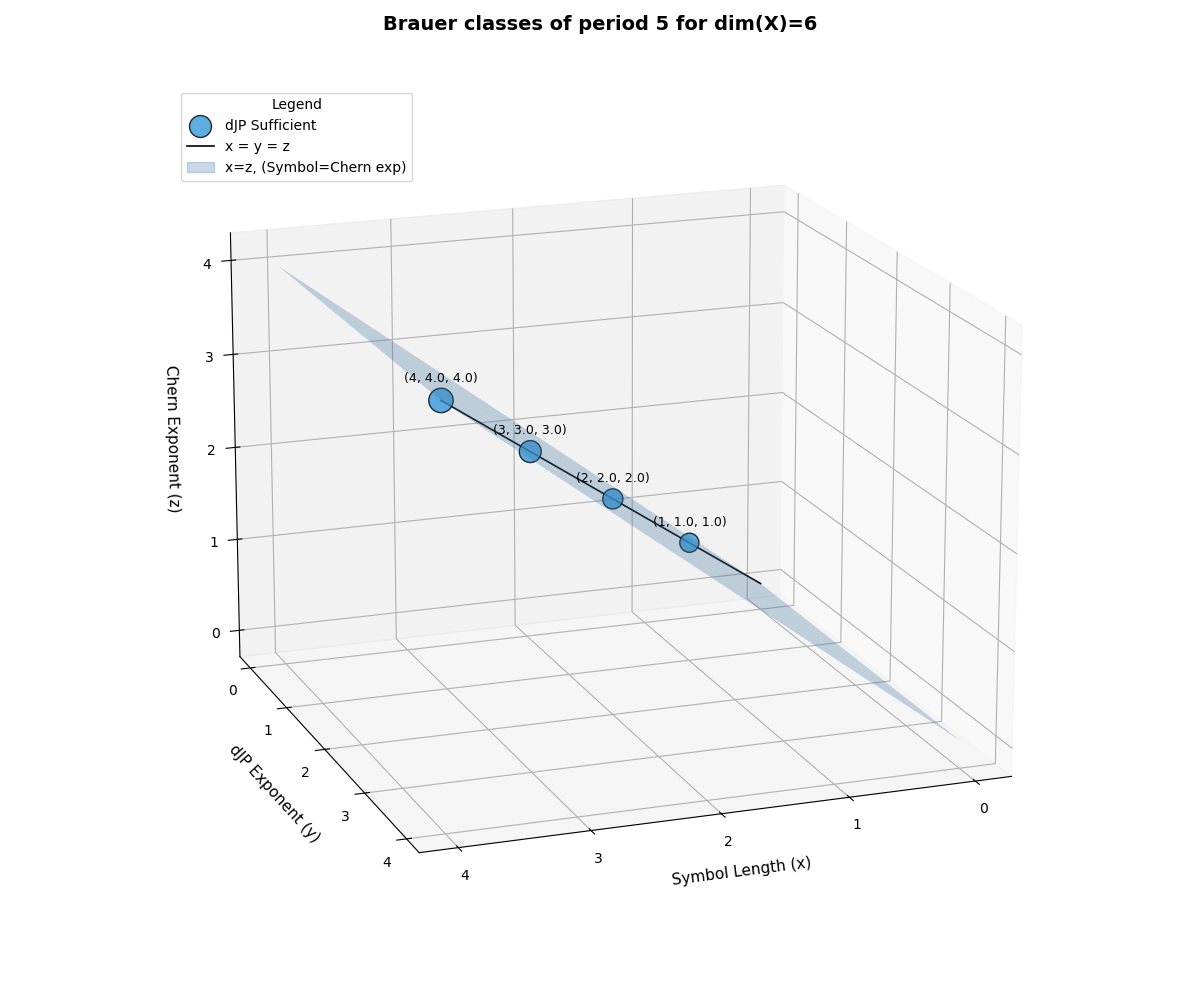}
    \end{adjustbox}
    \caption{A collection of 96 Brauer classes of period 5 on a very general abelian 6-fold sampled from 2-forms of Hamming weight at most 8, rotated.}
    \label{fig: bfield_data_6_5_r}
\end{figure}

\begin{figure}[h]
    \begin{adjustbox}{center}
    \includegraphics[width=1.6\textwidth]{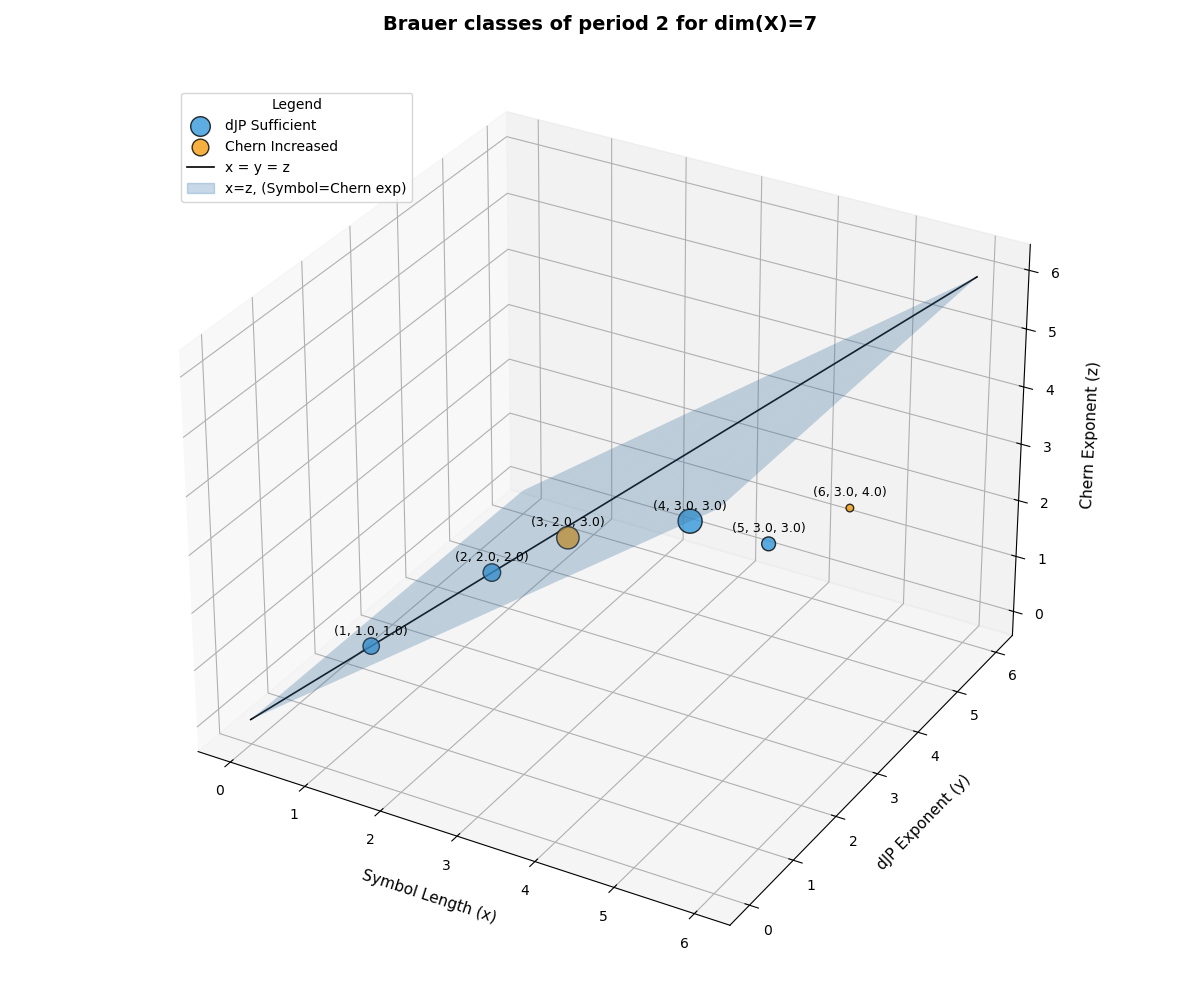}
    \end{adjustbox}
    \caption{A collection of 99 Brauer classes of period 2 on a very general abelian 7-fold sampled uniformly at random from 2-forms of Hamming weight at most 9. Blue bubbles indicate that either the maximum possible index (determined from symbol length) was obtained by the de Jong--Perry obstructions, or the Chern obstructions provided no new bounds. Yellow bubbles indicate algebras where Chern obstructions provided higher index bounds over the de Jong--Perry obstructions.}
    \label{fig: bfield_data_7_2}
\end{figure}

\begin{figure}[h]
    \begin{adjustbox}{center}
    \includegraphics[width=1.75\textwidth]{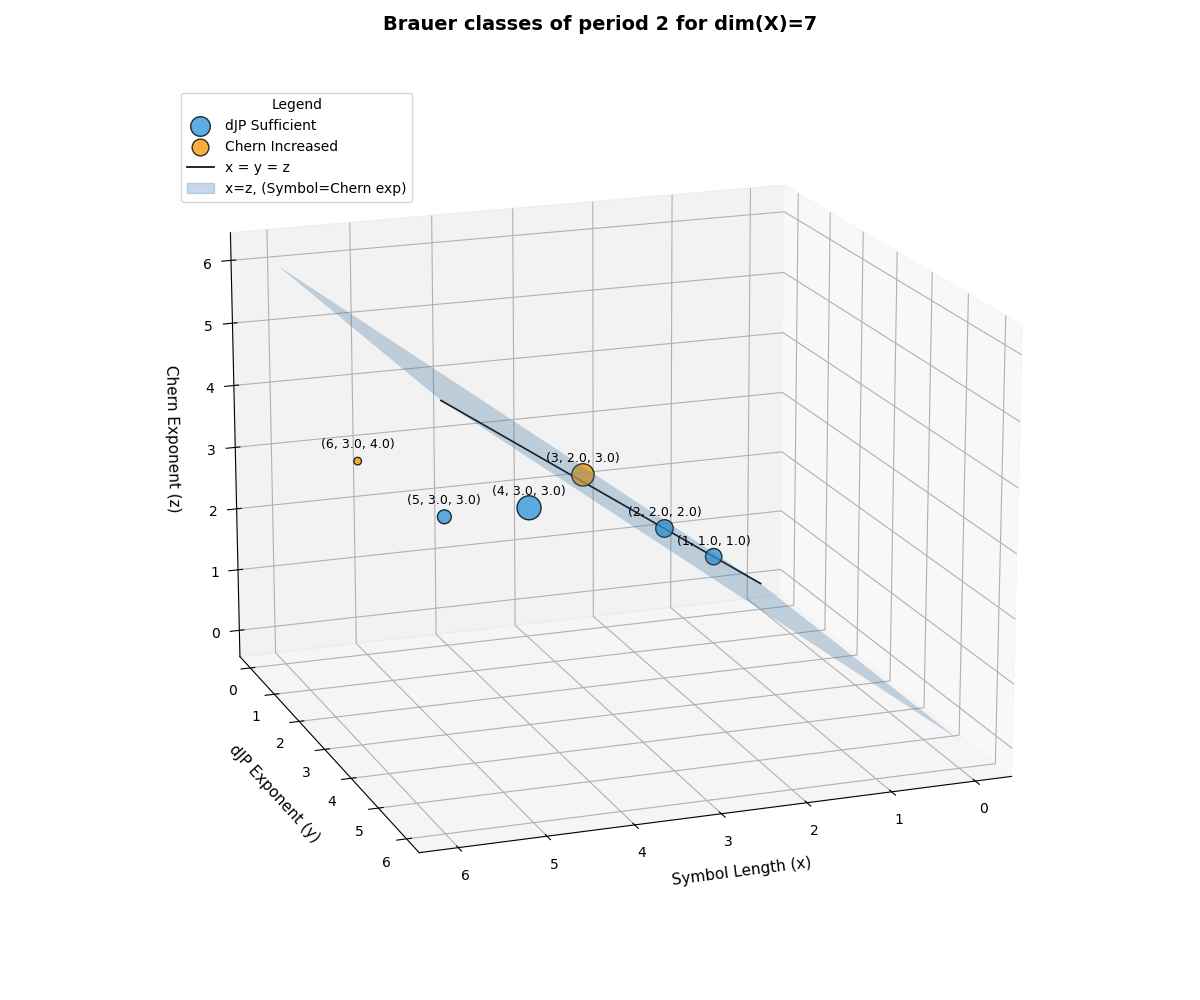}
    \end{adjustbox}
    \caption{A collection of 99 Brauer classes of period 2 on a very general abelian 7-fold sampled from 2-forms of Hamming weight at most 9, rotated.}
    \label{fig: bfield_data_7_2_r}
\end{figure}

\begin{figure}[h]
    \begin{adjustbox}{center}
    \includegraphics[width=1.6\textwidth]{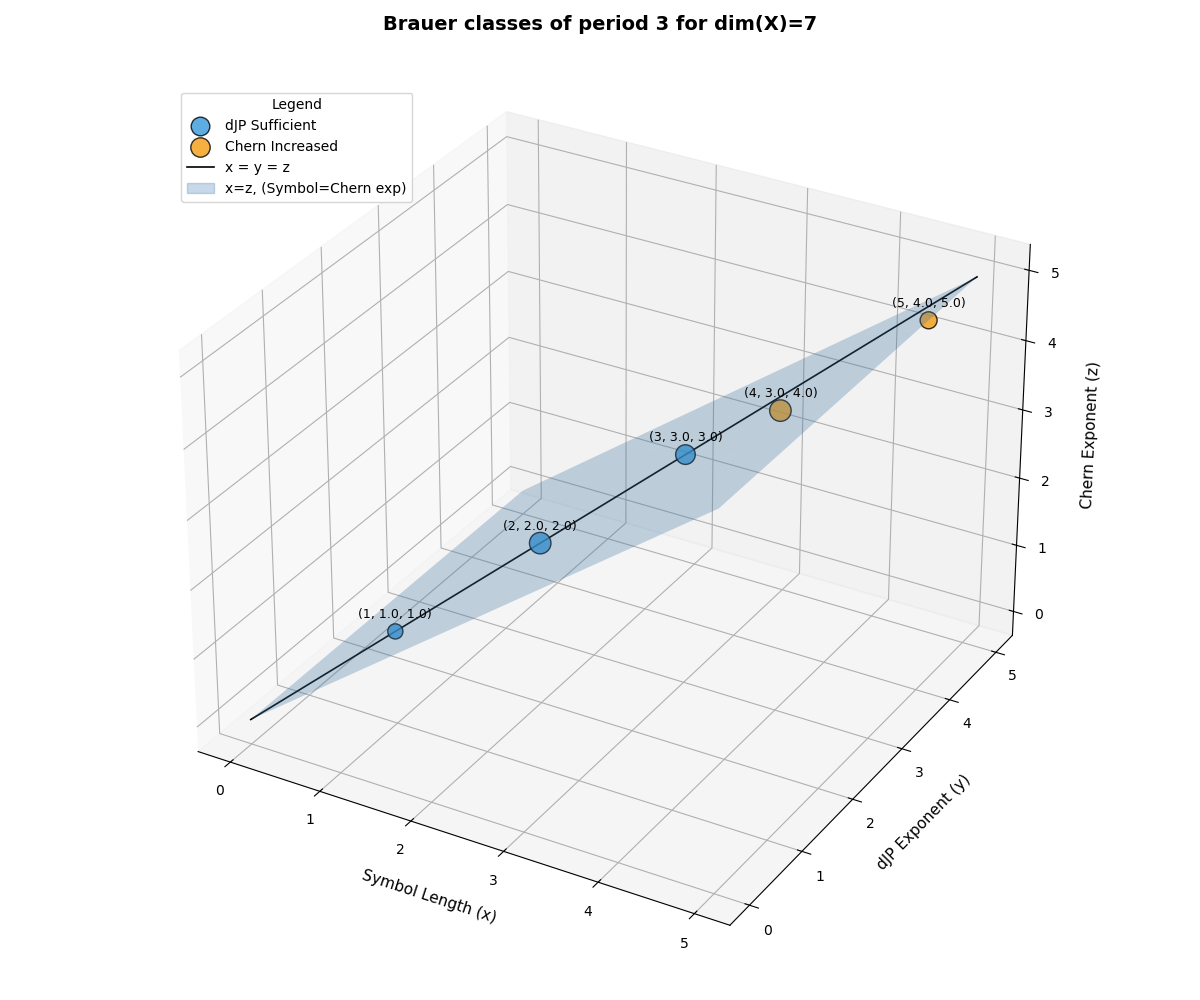}
    \end{adjustbox}
    \caption{A collection of 95 Brauer classes of period 3 on a very general abelian 7-fold sampled uniformly at random from 2-forms of Hamming weight at most 9. Blue bubbles indicate that either the maximum possible index (determined from symbol length) was obtained by the de Jong--Perry obstructions, or the Chern obstructions provided no new bounds. Yellow bubbles indicate algebras where Chern obstructions provided higher index bounds over the de Jong--Perry obstructions.}
    \label{fig: bfield_data_7_3}
\end{figure}

\begin{figure}[h]
    \begin{adjustbox}{center}
    \includegraphics[width=1.75\textwidth]{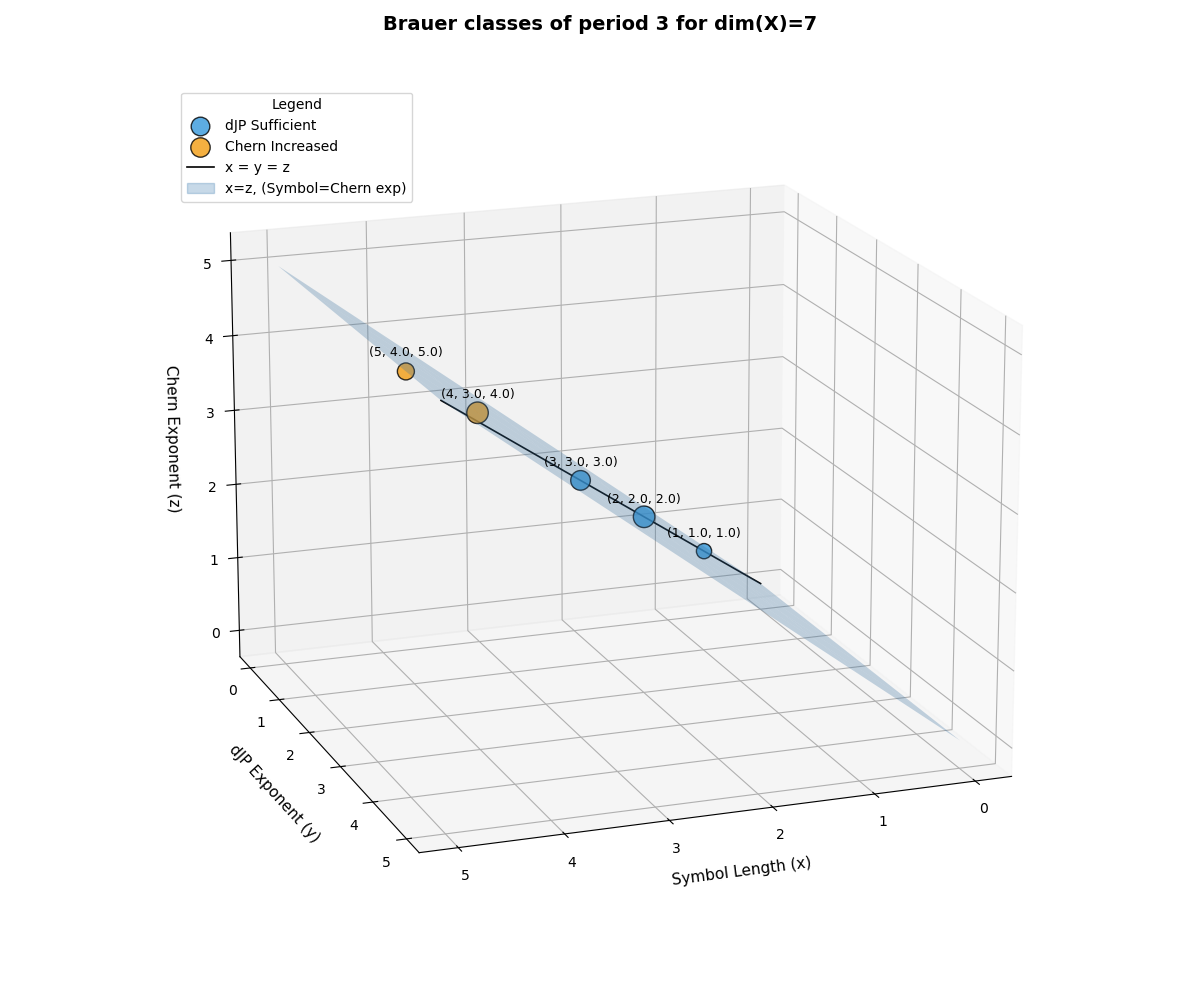}
    \end{adjustbox}
    \caption{A collection of 95 Brauer classes of period 3 on a very general abelian 7-fold sampled from 2-forms of Hamming weight at most 9, rotated.}
    \label{fig: bfield_data_7_3_r}
\end{figure}

\begin{figure}[h]
    \begin{adjustbox}{center}
    \includegraphics[width=1.6\textwidth]{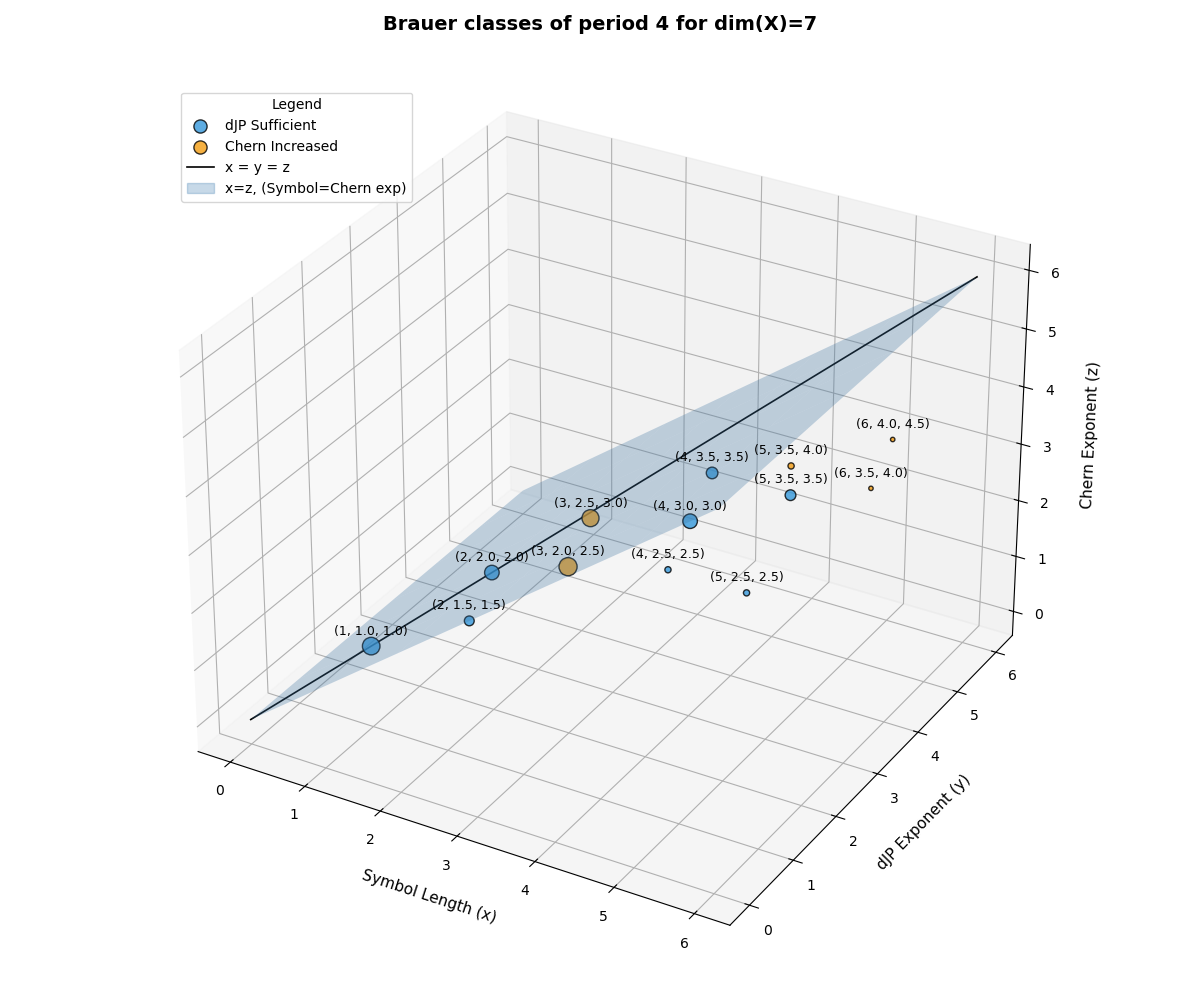}
    \end{adjustbox}
    \caption{A collection of 96 Brauer classes of period 4 on a very general abelian 7-fold sampled uniformly at random from 2-forms of Hamming weight at most 9. Blue bubbles indicate that either the maximum possible index (determined from symbol length) was obtained by the de Jong--Perry obstructions, or the Chern obstructions provided no new bounds. Yellow bubbles indicate algebras where Chern obstructions provided higher index bounds over the de Jong--Perry obstructions.}
    \label{fig: bfield_data_7_4}
\end{figure}

\begin{figure}[h]
    \begin{adjustbox}{center}
    \includegraphics[width=1.75\textwidth]{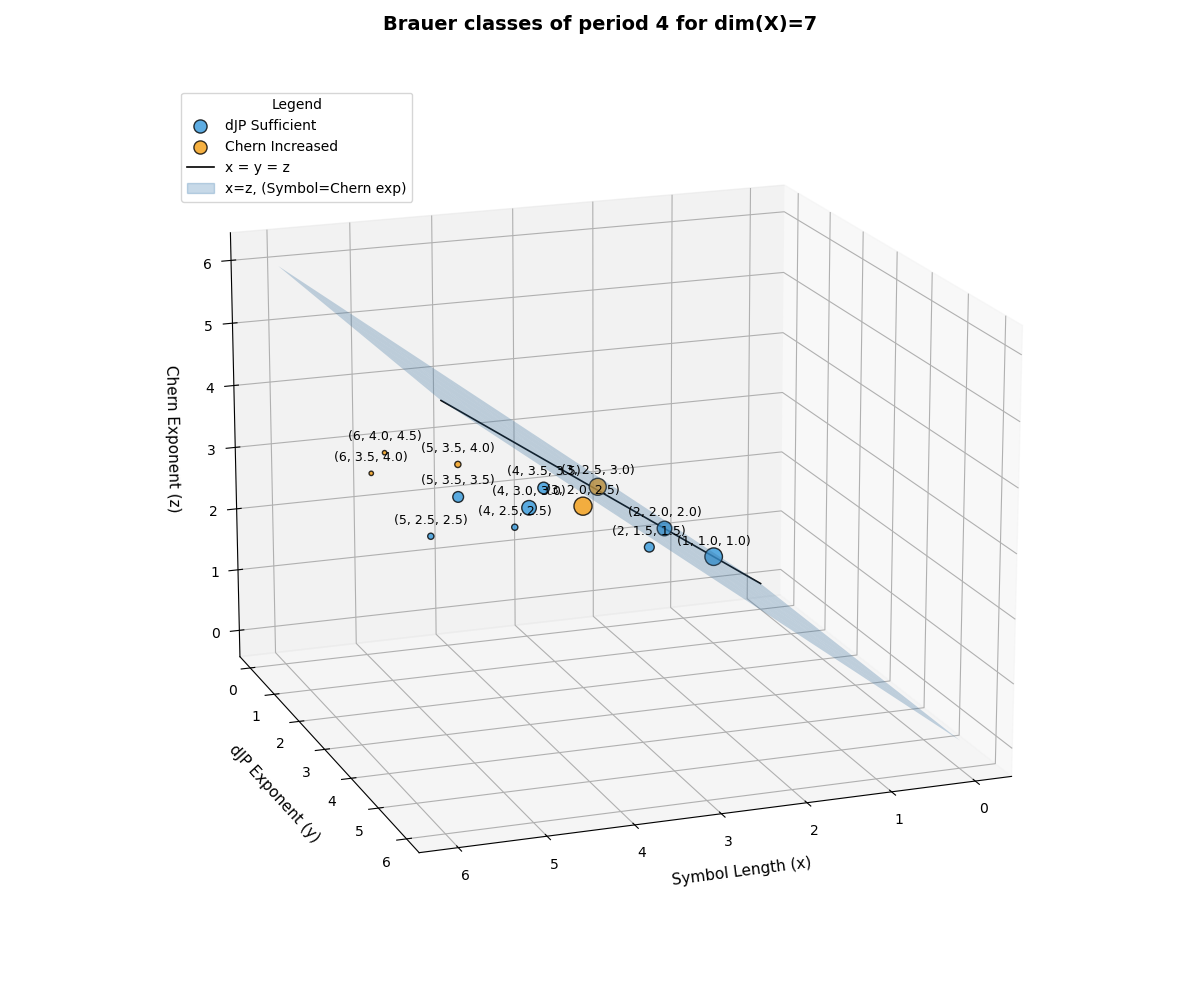}
    \end{adjustbox}
    \caption{A collection of 96 Brauer classes of period 4 on a very general abelian 7-fold sampled from 2-forms of Hamming weight at most 9, rotated.}
    \label{fig: bfield_data_7_4_r}
\end{figure}

\begin{figure}[h]
    \begin{adjustbox}{center}
    \includegraphics[width=1.6\textwidth]{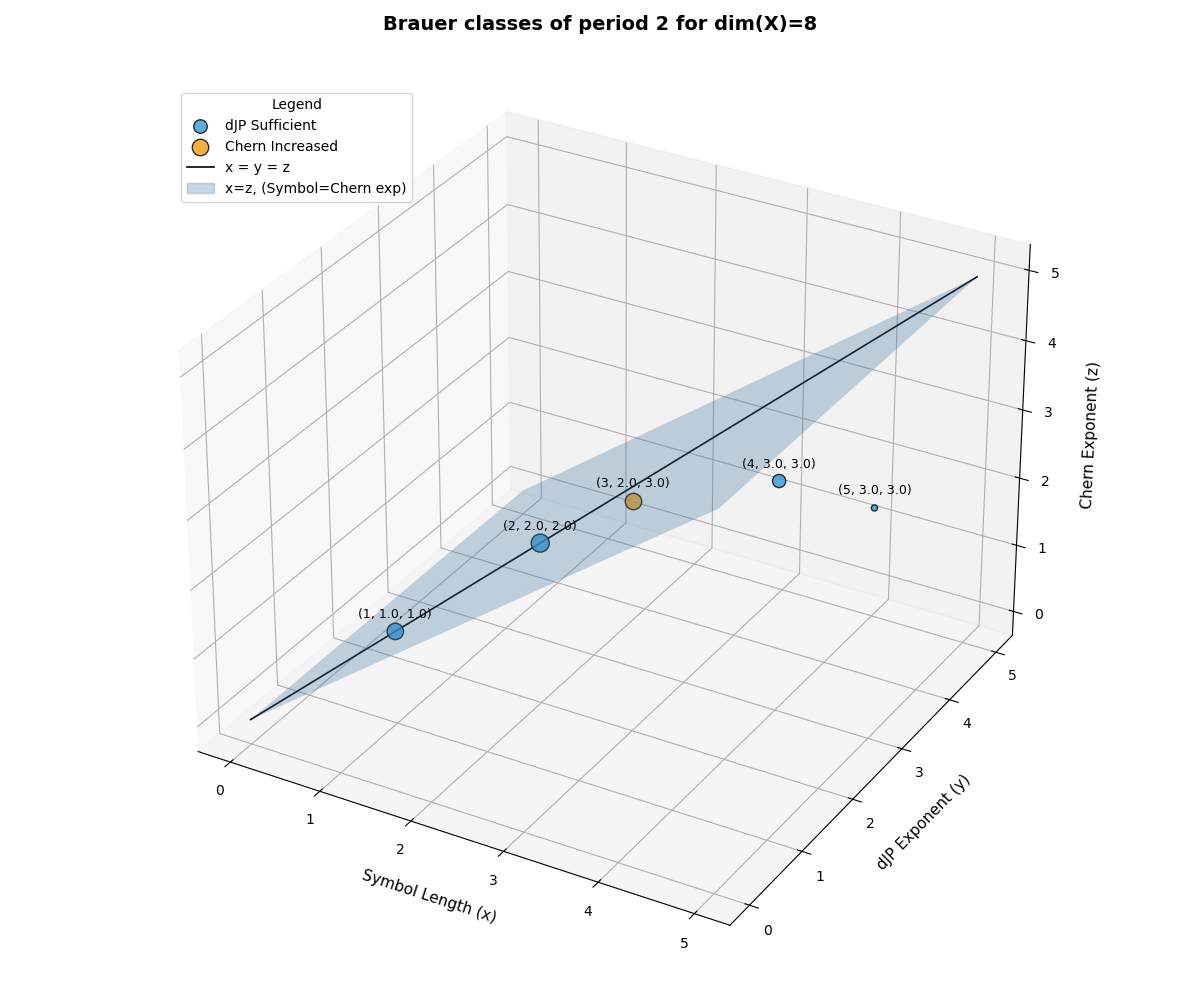}
    \end{adjustbox}
    \caption{A collection of 56 Brauer classes of period 2 on a very general abelian 8-fold sampled uniformly at random from 2-forms of Hamming weight at most 6. Blue bubbles indicate that either the maximum possible index (determined from symbol length) was obtained by the de Jong--Perry obstructions, or the Chern obstructions provided no new bounds. Yellow bubbles indicate algebras where Chern obstructions provided higher index bounds over the de Jong--Perry obstructions.}
    \label{fig: bfield_data_8_2}
\end{figure}

\begin{figure}[h]
    \begin{adjustbox}{center}
    \includegraphics[width=1.75\textwidth]{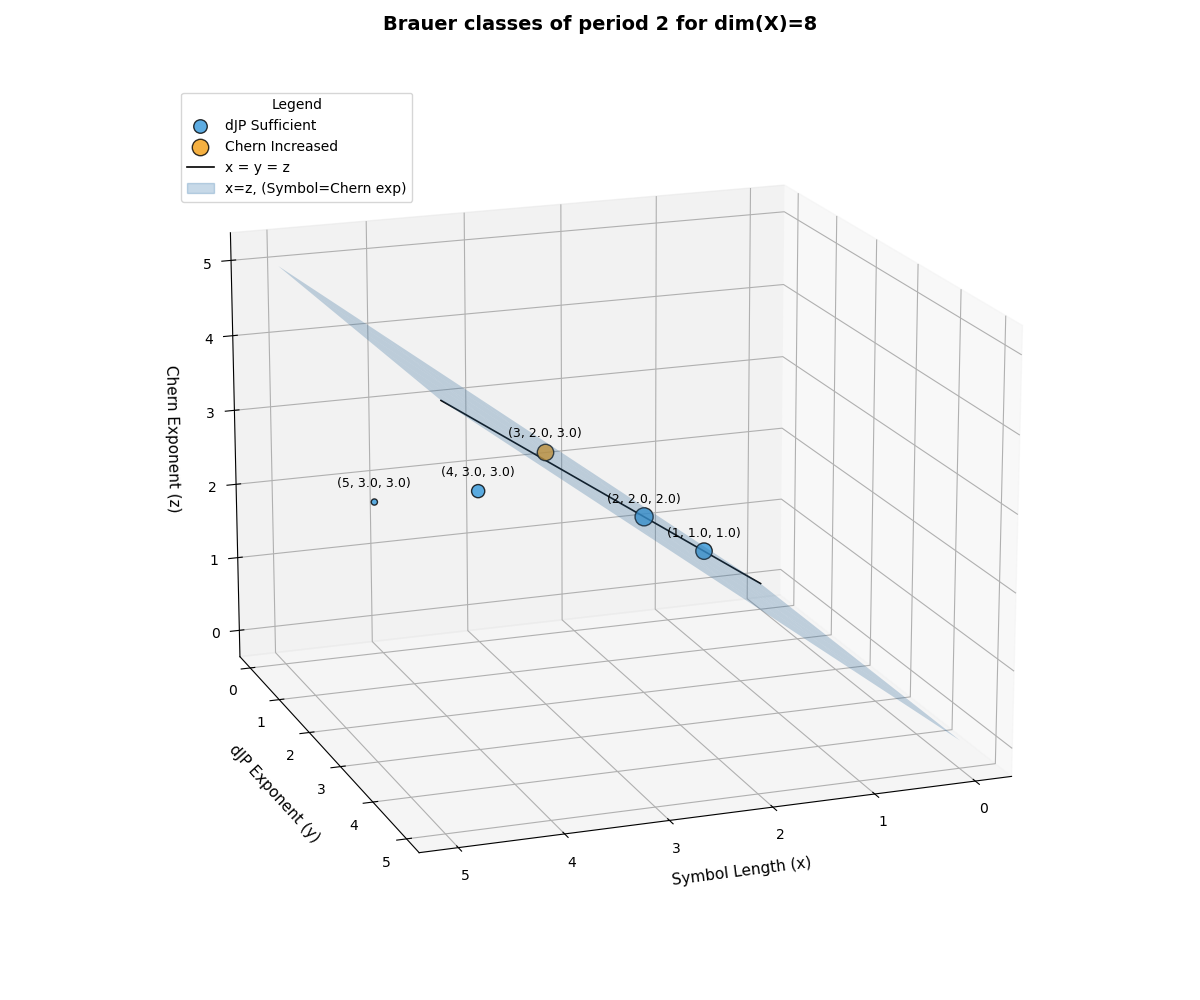}
    \end{adjustbox}
    \caption{A collection of 56 Brauer classes of period 2 on a very general abelian 8-fold sampled from 2-forms of Hamming weight at most 6, rotated.}
    \label{fig: bfield_data_8_2_r}
\end{figure}

\end{document}